\newtheorem{assumption}{Assumption}
\newtheorem*{property*}{Properties of Contrast Functions}
\newtheorem{remark}{Remark}
\newtheorem{theorem}{Theorem}
\newtheorem{corollary}{Corollary}[theorem]
\newtheorem{lemma}[theorem]{Lemma}
\newtheorem{proposition}[theorem]{Proposition}
\newcommand{\ba}[1]{\begin{align}#1\end{align}}
\newcommand{\bas}[1]{\begin{align*}#1\end{align*}}
\newcommand{\bu}{\pmb{u}}
\newcommand{\balpha}{\pmb{\alpha}}
\newcommand{\bx}{\pmb{x}}
\newcommand{\by}{\pmb{y}}
\newcommand{\bt}{\pmb{t}}
\newcommand{\be}{\pmb{e}}
\newcommand{\bg}{\pmb{g}}
\newcommand{\bz}{\pmb{z}}
\newcommand{\bbeta}{\pmb{\beta}}
\newcommand{\bv}{\pmb{v}}
\newcommand{\bq}{\pmb{q}}
\newcommand{\bvar}{\pmb{a}}
\newcommand{\nt}{\|\bt\|}
\newcommand{\nts}{\|\bt\|^2}
\newcommand{\rd}{\color{red}}
\newcommand{\bk}{\color{black}}
\newcommand{\E}{\mathbb{E}}
\newcommand{\norm}[1]{\left\|#1\right\|}
\newcommand{\cov}{\text{cov}}
\newcommand{\var}{\text{var}}
\newcommand\bindent{%
  \begingroup
  \setlength{\itemindent}{\myindent}
  \addtolength{\algorithmicindent}{\myindent}
}
\newcommand\eindent{\endgroup}
\newlength\myindent
\newcommand{\lref}[1]{\hyperref[{#1}]{Property~\ref*{#1}}}
\newcommand{\mult}[2]{\the\numexpr #1 * #2 \relax}
\newcommand{\add}[2]{\the\numexpr #1 + #2 \relax}
\newcommand{\mydiv}[2]{\the\numexpr #1/#2 \relax}
\newcommand{\bb}[1]{\left(#1\right)}
\newcommand{\bbb}[1]{\left[#1\right]}
\newcommand{\babs}[1]{\left|#1\right|}
\newcommand{\sgn}{\text{sgn}}
\newcommand{\bi}{\begin{itemize}}
\newcommand{\ib}{\end{itemize}}
\newcommand{\indep}{\perp \!\!\! \perp}
\DeclareMathOperator{\Tr}{Tr}
\DeclareMathOperator*{\argmin}{arg\,min}
\DeclareMathOperator{\Var}{Var}
\DeclareMathOperator{\sym}{sym}
\DeclareMathOperator{\diag}{diag}
\title{Nonparametric Evaluation of Noisy ICA Solutions}
\title{Nonparametric Evaluation of Noisy ICA Solutions}
\author[1]{Syamantak Kumar\thanks{syamantak@utexas.edu}}
\affil[1]{Department of Computer Science, University of Texas at Austin}
\author[2]{Purnamrita Sarkar \thanks{purna.sarkar@utexas.edu}}
\affil[2]{Department of Statistics and Data Sciences, University of Texas at Austin}
\author[3]{Peter Bickel \thanks{bickel@stat.berkeley.edu}}
\affil[3]{Department of Statistics, University of California, Berkeley}
\author[4]{Derek Bean \thanks{derekb@stat.wisc.edu}}
\affil[4]{Department of Statistics, University of Wisconsin, Madison}
\date{}
\begin{document}
\maketitle
\begin{abstract}
Independent Component Analysis (ICA) was introduced in the 1980's as a model for Blind Source Separation (BSS), which refers to the process of recovering the sources underlying a mixture of signals, with little knowledge about the source signals or the mixing process. While there are many sophisticated algorithms for estimation, different methods have different shortcomings. In this paper, we develop a nonparametric score to adaptively pick the right algorithm for ICA with arbitrary Gaussian noise. The novelty of this score stems from the fact that it just assumes a finite second moment of the data and uses the characteristic function to evaluate the quality of the estimated mixing matrix without any knowledge of the parameters of the noise distribution. In addition, we propose some new contrast functions and algorithms that enjoy the same fast computability as existing algorithms like FASTICA and JADE but work in domains where the former may fail. While these also may have weaknesses, our proposed diagnostic, as shown by our simulations, can remedy them. Finally, we propose a theoretical framework to analyze the local and global convergence properties of our algorithms. 
\end{abstract}
\section{Introduction}
\label{section:introduction}
Independent Component Analysis (ICA) was introduced in the 1980's as a model for Blind Source Separation (BSS)  \cite{comon2010handbook, comon1994independent}, which refers to the process of recovering the sources underlying a mixture of signals, with little knowledge about the source signals or the mixing process. It has become a powerful alternative to the Gaussian model, leading to PCA in factor analysis. A good account of the history, many results, and the role of dimensionality in noiseless ICA can be found in~\cite{hyvarinen2001independent,auddy2023large}. In one of its first forms, ICA is based on observing $n$ independent samples from a $k$-dimensional population:
\ba{ \bx = B\bz + \bg \label{eq:ICA_noise_model} }
where $B\in\mathbb{R}^{d\times k}$ ($d\geq k$) is an unknown mixing matrix, $\bz\in \mathbb{R}^{k}$ is a vector of statistically independent mean zero ``sources'' or ``factors'' and $\bg\sim N(\pmb{0},\Sigma)$ is a $d$ dimensional mean zero Gaussian noise vector with covariance matrix $\Sigma$, independent of $\bz$. We denote $\diag\bb{\bvar} := \cov\bb{\bz}$ and $S := \cov\bb{\bx} = B\diag\bb{\bvar}B^{T} + \Sigma$. The goal of the problem is to estimate $B$. 
\\ \\
ICA was initially developed in the context of $\bg=0$, now called noiseless ICA. In that form, it spawned an enormous literature  \cite{389881, 542183, 599941, 650095, cardoso1999high, hyvarinen1999fast, lee1999independent, oja2000independent, hastie2002independent, chen2005ICA,bach2003kernel,hao2009kernel} and at least two well-established algorithms FASTICA~\cite{hyvarienen1997fast} and JADE~\cite{JADE1993Cardoso} which are widely used. 
The general model with nonzero $\bg$ and unknown noise covariance matrix $\Sigma$, known as \textit{noisy} ICA, has developed more slowly, with its own literature ~\cite{arora2012Quasi,NIPS2013_4d2e7bd3,NIPS2015_89f03f7d,Ilmonen2011SemiparametricallyEI,ilmonen2011semiparametrically, rohe2020vintage}. In the following sections, we will show that various ICA and noisy ICA methods have distinct shortcomings. Some struggle with heavy-tailed distributions and outliers, while others require approximations of entropy-based objectives, which have their own challenges (see eg. ~\cite{issoglio2021estimation}). Although methods for noiseless cases may sometimes work in noisy settings~\cite{NIPS2015_89f03f7d}, they are not always reliable (e.g., see Figure 1 in~\cite{NIPS2015_89f03f7d} and Theorem 2 in~\cite{chen2005ICA}). Despite the plethora of algorithms for noisy and noiseless ICA, the literature has largely been missing a diagnostic to decide which algorithm to pick for a given dataset. This is our primary goal.

The paper is organized as follows. Section~\ref{sec:background} contains background and notation. Section~\ref{subsection:nonparametric_score} contains the independence score, its properties, and a Meta algorithm~\ref{alg:meta} that utilizes this score. 
Section \ref{section:chf_cgf_fn} introduces new contrast functions based on the natural logarithm of the characteristic function and moment generating function of $\bx^{T}\bu$. Section~\ref{sec:convergence} presents the global and local convergence results for noisy ICA. 
Section~\ref{section:experiments} demonstrates the empirical performance of our new contrast functions as well as the Meta algorithm applied to a variety of methods for inferring $B$.
\section{Background and overview}
\label{sec:background}
This section contains a general overview of the main concepts of ICA. 

 \textbf{Notation:} We denote vectors using bold font as $\bv \in \mathbb{R}^{d}$. The dataset is represented as $\left\{\bx^{\bb{i}}\right\}_{i \in [n]}$ for $\bx^{\bb{i}} \in \mathbb{R}^{d}$. 
Scalar random variables and matrices are represented using upper case letters, respectively. 
 $\bm{I}_k$ denotes the $k$- dimensional identity matrix. Entries of vector $\bv$ (matrix $M$) are represented as $v_{i}$ ($M_{ij}$). $M(:,i)$ ($M(i,:)$) represents the $i^{\text{th}}$ column (row) of matrix $M$. $M^{\dag}$ represents the Moore–Penrose inverse of $M$. $\|.\|$ represents the Euclidean $l_{2}$ norm for vectors and operator norm for matrices. $\|.\|_{F}$ similarly represents the Frobenius norm for matrices. $\E\bbb{.}$ ($\widehat{\E}\bbb{.}$) denotes the expectation (empirical average); $\bx \indep \by$ denotes statistical independence. $\|.\|_{\psi_{2}}$ represents the subgaussian Orlicz norm (see Def. 2.5.6 in \cite{verhsynin-high-dimprob}). $X_{n} = O_{P}\bb{a_n}$ implies that $\frac{X_n}{a_n}$ is stochastically bounded i.e, $\forall \epsilon > 0, \exists $ a finite $M$ such that $\sup_{n}\mathbb{P}(\left|\frac{X_n}{a_n}\right| > M) \leq \epsilon$ (See \cite{van2000asymptotic}).
 
 
\textbf{Identifiability:} 
One key issue in Eq~\ref{eq:ICA_noise_model} is the identifiability of $A:=B^{-1}$\footnote{If $B$ is not invertible, this can be replaced by the Moore-Penrose inverse.}, which holds up to permutation and scaling of the coordinates of $\bz$ if, at most, one component of $\bz$ is Gaussian (see~\cite{darmois1953analyse,skitovitch1953property}). 
If $d=k$ and $\Sigma$ are unknown, it is possible, as we shall see later, to identify the canonical versions of $B$ and $\Sigma$. For $d>k$ and unknown $\Sigma$, it is possible (see~\cite{Davies2004identifiability}) to reduce to the $d=k$ case. In this work, we assume $d=k$. 
 For classical factor models, when $\bz$ is Gaussian, identifiability can only be identified up to rotation and scale, leading to non-interpretability.  This suggests fitting strategies focusing on recovering $B^{-1}$ through nongaussianity as well as independence. In the noisy model (Eq~\ref{eq:ICA_noise_model}), an additional difficulty is that recovery of the source signals becomes impossible even if $B$ is known exactly. This is because, the ICA models $\mathbf{x} = B(\mathbf{z} + \mathbf{r}) + \mathbf{g}$ and $\mathbf{x} = B\mathbf{z} + (B\mathbf{r} + \mathbf{g})$ are indistinguishable 
(see~\cite{DBLP:journals/corr/VossBR15} pages 2-3). This is resolved in~\cite{DBLP:journals/corr/VossBR15} via maximizing the Signal-to-Interference-plus-Noise ratio (SINR). In classical work on ICA, a large class of algorithms (see \cite{oja2000independent}) choose to optimize a measure of non-gaussianity, referred to as contrast functions.

\bk

\vspace{.1em}
\textbf{Contrast functions:}
 Methods for estimating $B$ typically optimize an empirical contrast function.
Formally, we define a contrast function $f(\bu|P)$ by $f:\mathcal{B}_k\times \mathcal{P}\rightarrow \mathbb{R}$ where $\mathcal{B}_k$ is the unit ball in $\mathbb{R}^k$ and $\mathcal{P}$ is essentially the class of mean zero distributions on $\mathbb{R}^d$. More concretely, for suitable choices of $g$, $f(\bu|P)\equiv f(\bu|\bx)=\E_{\bx\sim P}[g(\bu^T\bx)]$ for any random variable $\bx \sim P$.
The most popular example of a contrast function is the kurtosis, which is defined as $f(\bu|P)=\E_{\bx\sim P} \left.(\bu^T\bx)^4\right/\E\bbb{(\bu^T\bx)^2}^2-3$. 
The aim is to maximize an empirical estimate, $\hat{f}(\bu|P)$.  
Notable contrast functions are the negentropy, mutual information (used by INFOMAX~\cite{bell1995information}), the $\tanh$ function, etc. (See \cite{oja2000independent} for further details). 

\vspace{.1em}
\textbf{Fitting strategies: }
 There are two broad categories of the existing fitting strategies.
 (I) Find $A$ such that the components of $A\bx$ are both as nongaussian and as independent as possible. See, for example, the multivariate cumulant-based method JADE~\cite{JADE1993Cardoso} and characteristic function-based methods~\cite{ERIKSSON20032195,chen2005ICA}. The latter we shall call the PFICA method.
 (II) Find successively the $j^{th}$ row of $A$, denoted by $A(j,:)\in\mathbb{R}^d$, $j=1,\dots, k$ such that $A(j,:)^T\bx$ are independent and each as nongaussian as possible, that is, estimate $A(1,:)$, project, and then apply the method again on the residuals successively. The chief representative of these methods is FastICA~\cite{hyvarinen1999fast} based on univariate kurtosis. 

 \vspace{.1em}
 \textbf{From noiseless to noisy ICA: } In the noiseless case one can first prewhiten the dataset using the empirical variance-covariance matrix, $\hat{\Sigma}$, of $\bx^{(i)}$, using $\hat{\Sigma}^{-1/2}$. Therefore, WLOG, $B$ can be assumed to be orthogonal. Searching over orthogonal matrices not only simplifies algorithm design for fitting strategy (I) but also makes strategy (II) meaningful. It also greatly simplifies the analysis of kurtosis-based contrast functions (see ~\cite{conf/icassp/DelfosseL96}). For noisy ICA, prewhitening requires knowledge of the noise covariance, and therefore many elegant methods ~\cite{arora2012Quasi,DBLP:journals/corr/VossBR15,NIPS2015_89f03f7d} avoid this step. 


\vspace{-1pt}

\textbf{Individual shortcomings of different contrast functions and fitting strategies:} 
To our knowledge, no single ICA algorithm or contrast function works universally across all source distributions. Adaptively determining which algorithm is useful in a given setting would be an important tool in a practitioner's toolbox.
Key challenges include:

     \textbf{Cumulants:} Even though contrast functions based on \textit{even cumulants} have no spurious local optima (Theorem~\ref{theorem:global_convergence},~\cite{NIPS2015_89f03f7d}), they can vanish for some non-Gaussian distributions. Our experiments (Section~\ref{section:experiments}) show that kurtosis-based contrast functions can perform poorly even when there are a few independent components with zero kurtosis. They also suffer under heavy-tailed source distributions~\cite{rademacher2017heavy,chen2005ICA,hyvarinen1997ica}.

     \textbf{PFICA:} PFICA can outperform cumulant-based methods in some situations (Section~\ref{section:experiments}). However, it is computationally much slower, and poorly performs for Bernoulli($p$) sources with small $p$.
     
    \textbf{FastICA: } \cite{issoglio2021estimation} show that FastICA's use of Negentropy approximations for computational efficiency may result in a contrast function where optimal directions do not correspond to directions of low entropy. $\tanh$ is another popular smooth contrast function used by FastICA. However, in~\cite{wei2017fastica}, the authors show that this tends to return spurious solutions for some distributions~\cite{wei2017fastica}.

Contrast functions are usually nonconvex and may have spurious local optima. However, they may still, under suitable conditions, converge to maximal directions. We introduce such a contrast function which vanishes iff $\bx$ is Gaussian in Section~\ref{section:chf_cgf_fn} and does not require higher moments.

\textbf{Our contributions:}

1) Using Theorem~\ref{theorem:independence_score_correctness} we obtain a computationally efficient scoring method for evaluating the $B$ estimated by different algorithms. Our score can also be extended to evaluate each demixing direction in sequential settings, which we do not pursue here.

2) We propose new contrast functions that work under scenarios where cumulant-based methods fail. We propose a fast sequential algorithm as in~\cite{NIPS2015_89f03f7d} to optimize these and further provide theoretical guarantees for convergence.

\section{Main contributions}
\label{section:main_results}
In this section, we present the ``Independence score'' and state the key result that justifies it. We conclude with two new contrast functions. 
\subsection{Independence score}
\label{subsection:nonparametric_score}
While there are many tests for independence~\cite{gretton2007kernelindependence,kirshner2008ICA,hao2009kernel,bach2003kernel}, we choose the Characteristic function-based one because it is easy to compute and has been widely used in normality testing~\cite{ebner2021bahadur,ebner2023eigenvalues,HENZE19971} and ICA~\cite{chen2005ICA,ERIKSSON20032195,fabian2005chf}.
\bk
Let us start with noiseless ICA to build intuition. Say we have estimated the inverse of the mixing matrix, i.e. $B^{-1}$ using a matrix $F$. Ideally, if $F=B^{-1}$, then $F\bx=\bz$, where $\bz$ is a vector of independent random variables. Kac's theorem \cite{Kac1936SurLF} tells us that
$\E\bbb{\exp(it^T\bz)}=\prod_{j=1}^k \E\bbb{\exp(it_jz_j)}$ if and only if $z_i$ are independent. In~\cite{ERIKSSON20032195}, a novel characteristic function-based objective (CHFICA), is further analyzed and studied by~\cite{chen2005ICA} (PFICA). Here one minimizes $|\E\bbb{\exp(it^TF\bx)}-\prod_{j=1}^k\E\bbb{\exp(i t_j (F\bx)_j)}|$ over $F$. We refer to this objective as the \textit{uncorrected} independence score.
We propose to adapt the CHFICA objective using \textit{estimable parameters}, $S$, to the noisy ICA setting. We will minimize:
{\small
\ba{\label{eq:score}
\Delta(\bt,F|P):=\left|\underbrace{\E\bbb{\exp(i\bt^TF\bx)}\exp\bb{\frac{-\bt^T\diag(F S F^T)\bt}{2}}}_{\textsc{Joint}}\right.
\left.-\underbrace{\prod_{j=1}^k\E\bbb{\exp(i t_j (F\bx)_j)}\exp\bb{\frac{-\bt^TFS F^T\bt}{2}}}_{\textsc{Product}}\right|
}
}
where $S=\E[\bx\bx^T]$ and can be estimated using the sample covariance matrix. Hence, we do not require knowledge of any model parameters. 
We refer to this score as the (\textit{corrected}) independence score. The second terms in JOINT and PRODUCT (Eq~\ref{eq:score}) \textit{corrects} the original score by canceling out the additional terms resulting from the Gaussian noise using the covariance matrix S of the data (estimated via the sample covariance). 
See~\cite{ERIKSSON20032195} for a related score requiring knowledge of the unknown Gaussian covariance matrix $\Sigma$. 
We are now ready to present our first theoretical result for consistency of $\Delta(\bt,F|P)$.

\begin{theorem}
\label{theorem:independence_score_correctness}
    If $F \in \mathbb{R}^{k \times k}$ is invertible and the joint and marginal characteristic functions of all independent components, $\left\{z_{i}\right\}_{i\in[k]}$, are twice-differentiable, then $\forall \bt \in \mathbb{R}^{k}$, $\Delta(\bt,F|P)=0$ \textit{iff} $F=DB^{-1}$ where $D$ is a permutation of an arbitrary diagonal matrix.
\end{theorem}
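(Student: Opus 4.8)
The plan is to reduce the condition ``$\Delta(\bt,F\,|\,P)=0$ for all $\bt$'' to a single functional equation involving only the \emph{non-Gaussian parts} of the sources, after which both implications become transparent. First I would set $W:=FB$, which is invertible since $F$ and $B$ are, and write $F\bx=W\bz+F\bg$. Using $\bz\indep\bg$, the Gaussian identity $\E\bbb{\exp(i\bt^TF\bg)}=\exp\bb{-\tfrac12\bt^TF\Sigma F^T\bt}$, and the mutual independence of the $z_l$, the \textsc{Joint} and \textsc{Product} terms of $\Delta$ factor: the source contributions are $\prod_j\phi_{z_j}\bb{(W^T\bt)_j}$ and $\prod_{j}\prod_{l}\phi_{z_l}\bb{W_{jl}t_j}$ respectively (here $\phi_{z_l}$ is the characteristic function of $z_l$), each multiplied by a Gaussian factor in $F\Sigma F^T$ coming from $F\bg$.

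The crux of the construction is that, since $S=B\diag\bb{\bvar}B^T+\Sigma$ and hence $FSF^T=W\diag\bb{\bvar}W^T+F\Sigma F^T$, the two $S$-corrections in $\Delta$ together with these noise factors cancel \emph{all} quadratic terms once we pass to $\Psi_l(s):=\log\phi_{z_l}(s)+\tfrac12 a_l s^2$, where $a_l:=\var(z_l)$. Substituting $\log\phi_{z_l}=\Psi_l-\tfrac12 a_l(\cdot)^2$ and taking logarithms (valid on a neighborhood of $\bt=0$, where all characteristic functions are nonzero), every term carrying $\Sigma$ or $\diag\bb{\bvar}$ appears identically on both sides and drops out, leaving the identity $\sum_j\Psi_j\bb{\langle W(:,j),\bt\rangle}=\sum_{j}\sum_{l}\Psi_l\bb{W_{jl}t_j}$, which I call $(\star)$; note $(W^T\bt)_j=\langle W(:,j),\bt\rangle$, and that $\Psi_l\equiv 0$ exactly when $z_l$ is Gaussian. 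This cancellation is precisely why no knowledge of $\Sigma$ is needed.

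For the ``if'' direction, if $F=DB^{-1}$ then $W=D$ is a permutation of a diagonal matrix, and substituting into $(\star)$ makes both sides equal to $\sum_m\Psi_{\pi(m)}\bb{D_{m\pi(m)}t_m}$ for the underlying permutation $\pi$. For ``only if'', I would exploit that the right-hand side of $(\star)$ is additively separable in the $t_j$, so its mixed second partials vanish; differentiating $\partial^2/\partial t_p\partial t_q$ for $p\ne q$ and using invertibility of $W$ to let $\bs=W^T\bt$ range over all of $\mathbb{R}^k$ yields $\sum_j\Psi_j''(s_j)\,W_{pj}W_{qj}=0$ for every $\bs$. Each summand depends on a single coordinate $s_j$, so by the elementary separation lemma each is constant; evaluating at $\bs=0$ and using $\Psi_j''(0)=0$ (which follows from $\phi_{z_j}(s)=1-\tfrac12 a_j s^2+o(s^2)$, i.e.\ mean zero and finite variance) forces $\Psi_j''(s_j)W_{pj}W_{qj}\equiv 0$. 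Hence for every non-Gaussian $z_j$ and every $p\ne q$ we get $W_{pj}W_{qj}=0$, so each such column of $W$ has at most one nonzero entry; invertibility of $W$ then upgrades ``at most one'' to ``exactly one, in distinct rows,'' making $W$ a permutation of a diagonal matrix and giving $F=WB^{-1}=DB^{-1}$.

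The main obstacle is the rigidity step that converts ``$\Psi_j''\equiv 0$ near the origin'' into ``$z_j$ is Gaussian,'' since characteristic functions may vanish and need not be analytic. I would control this by working throughout in a neighborhood of $\bt=0$ (where twice-differentiability of $\phi_{z_j}$, equivalently finiteness of the second moments, licenses the term-by-term differentiation above) and invoking a Darmois--Skitovich-type argument to propagate the local quadratic behavior of $\log\phi_{z_j}$ to global Gaussianity. A second, structural point is that the ``only if'' conclusion genuinely requires the sources to be non-Gaussian: a Gaussian $z_j$ contributes $\Psi_j\equiv 0$ and leaves its column of $W$ completely unconstrained by $(\star)$ (indeed, if \emph{all} $z_j$ are Gaussian then $\Delta\equiv 0$ for every invertible $F$), consistent with the identifiability discussion. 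Thus the non-Gaussianity of the components is the hypothesis doing the real work, and isolating exactly where twice-differentiability and invertibility enter is the part I would write most carefully.
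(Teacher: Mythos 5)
Your proposal is correct and follows essentially the same route as the paper's proof: take logarithms so that the correction terms reduce the condition to the statement that the log joint characteristic function of $FB\bz$ minus the sum of its log marginals is quadratic in $\bt$, take mixed second partials, substitute $\bs=(FB)^T\bt$ using invertibility, separate variables to force $W_{pj}W_{qj}=0$ for each non-Gaussian component, and conclude via invertibility that $FB$ is a permutation of a diagonal matrix. Your explicit bookkeeping with $\Psi_l$ and the observation that $\Psi_j''(0)=0$ pins the separation constant to zero is a slightly cleaner rendering of the paper's ``$f_k$ cannot be quadratic'' step, but the argument is the same.
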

Unfortunately, $F$ is not uniquely defined if $\Sigma$ is unknown as we have noted in Section~\ref{section:introduction}.
The proof of Theorem~\ref{theorem:independence_score_correctness} is provided in the Appendix Section~\ref{section:proofs}. When using this score in practice, $S$ is replaced with the sample covariance matrix of the data, and the expectations are replaced by the empirical estimates of the characteristic function. The convergence of this empirical score, $\Delta(\mathbf{t},F|\hat{P})$, to the population score, $\Delta(\mathbf{t},F|P)$, is given by the following theorem.

\begin{theorem}
    \label{theorem:uniform_convergence}
    Let $\mathcal{F}:=\{F\in \mathbb{R}^{k\times k}:\|F\|\leq 1\}$, $\bx\sim\text{subgaussian}(\sigma)$ and $C_{k} := \max(1,
k\log\bb{n}\Tr(S))$. Then, we have 
    {\normalsize
    \bas{
    \sup_{F\in\mathcal{F}}|\E_{\bt\in N(0,I_k)}\Delta(\bt,F|P)-\E_{\bt\in N(0,I_k)}\Delta(\bt,F|\hat{P})|=O_P\bb{\sqrt{\frac{k^2\|S\|\max(k,\sigma^{4}\|S\|)\log^2 (n C_k)}{n}}}
    }}
\end{theorem}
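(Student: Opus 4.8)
The plan is to control the two perturbations that separate $\Delta(\bt,F|\hat{P})$ from $\Delta(\bt,F|P)$---the replacement of the joint and marginal characteristic functions by their empirical counterparts, and the replacement of $S$ by the sample covariance $\hat{S}$ inside the two Gaussian correction factors---and to make every estimate uniform over $F\in\mathcal{F}$. I would first apply the reverse triangle inequality $\bigl|\,|a|-|b|\,\bigr|\le|a-b|$ to $\Delta$, which removes the outer modulus and reduces the problem to bounding the \textsc{Joint} difference and the \textsc{Product} difference separately. For each I would insert an intermediate quantity that uses the empirical characteristic function together with the true $S$, splitting each difference into a pure characteristic-function error (true $S$) and a pure covariance-plug-in error (true CF). Throughout I would use that both correction factors $\exp(-\bt^T\diag(FSF^T)\bt/2)$ and $\exp(-\bt^T FSF^T\bt/2)$ lie in $[0,1]$, so they are harmless as multipliers and, being monotone, controllable under perturbation.

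The covariance-plug-in error is the easier piece. Since the exponents are nonnegative and $x\mapsto e^{-x}$ is $1$-Lipschitz on $[0,\infty)$, I would bound the change in either correction factor by the change in its exponent, e.g.\ $|\bt^T F(\hat{S}-S)F^T\bt|\le\|\bt\|^2\|\hat{S}-S\|$ using $\|F\|\le1$ (and identically for the diagonal version, since each diagonal entry of $F(\hat{S}-S)F^T$ is bounded by $\|\hat{S}-S\|$). Integrating against $\bt\sim N(0,I_k)$ replaces $\|\bt\|^2$ by $\E_\bt\|\bt\|^2=k$, so this error is of order $k\,\|\hat{S}-S\|$. I would then invoke a subgaussian concentration bound for the sample covariance of a mean-zero vector with $\|\bx\|_{\psi_2}\le\sigma$, of the form $\|\hat{S}-S\|=O_P\bigl(\sqrt{\|S\|\max(k,\sigma^4\|S\|)\log(nC_k)/n}\bigr)$, in which the $\sigma^4\|S\|$ term is the subexponential Bernstein variance of the products $x_ix_j$ and the $\max$ with $k$ is the effective-dimension contribution. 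Multiplying by the factor $k$ already yields a term of order $\sqrt{k^2\|S\|\max(k,\sigma^4\|S\|)\log(nC_k)/n}$, matching the stated numerator.

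The characteristic-function error is the main obstacle, because here I must control an empirical process uniformly over the $k^2$-dimensional set $\mathcal{F}$. For fixed $F$, the integrated increment $\E_\bt\bigl[(\hat{\phi}(\bt,F)-\phi(\bt,F))\cdot\text{corr}\bigr]$ is a mean-zero average of $n$ i.i.d.\ bounded (modulus $\le1$) terms, so a Hoeffding/bounded-difference argument gives subgaussian concentration at rate $1/\sqrt{n}$ pointwise; the integration over $\bt$ can be performed in closed form, as it is a Gaussian integral in $\bt$, producing a clean variance proxy tied to $\|S\|$. To upgrade to a supremum over $\mathcal{F}$ I would build an $\epsilon$-net of the operator-norm ball, whose cardinality satisfies $\log N(\epsilon)=O(k^2\log(3/\epsilon))$---the source of the second $k^2$ factor---union-bound the pointwise tail over the net, and control the oscillation between neighboring net points via a Lipschitz-in-$F$ estimate of the per-sample map $F\mapsto\E_\bt[\exp(i\bt^TF\bx)\,\text{corr}]$. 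The \textsc{Product} term I would treat by the telescoping identity $|\prod_j a_j-\prod_j b_j|\le\sum_j|a_j-b_j|$, valid for $|a_j|,|b_j|\le1$, reducing it to $k$ single-marginal characteristic-function errors handled exactly as above.

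The delicate point, and where I expect the real work to lie, is the bookkeeping in this last step: the Lipschitz constant of $F\mapsto\E_\bt[\exp(i\bt^TF\bx)\,\text{corr}]$ is of order $(\E_\bt\|\bt\|)\,\|\bx\|$, so to make the net argument quantitative I must control $\max_{i\le n}\|\bx^{(i)}\|$, whose subgaussian tail contributes a $\log n$ and, through the scale $\Tr(S)$, the quantity $C_k=\max(1,k\log(n)\Tr(S))$ inside the logarithm. Choosing the net scale $\epsilon$ of order $1/n$ to balance discretization against concentration, and combining the net cardinality $k^2\log(1/\epsilon)$ with the subgaussian/subexponential tail exponents, is what turns the two logarithmic contributions (one from the covariance deviation, one from the net) into $\log^2(nC_k)$; keeping the dependence on $k$, $\|S\|$, and $\sigma$ simultaneously sharp across the net-plus-concentration combination is the principal difficulty.
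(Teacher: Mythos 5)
Your decomposition is the same as the paper's: split $\Delta(\bt,F|P)-\Delta(\bt,F|\hat P)$ into a characteristic-function error and a covariance-plug-in error, handle the latter via the Lipschitzness of $x\mapsto e^{-x}$ together with $\|\bt\|^2\|\hat S-S\|$ and subgaussian covariance concentration, and reduce the product term to $k$ marginal errors by the telescoping inequality $|\prod_j a_j-\prod_j b_j|\le\sum_j|a_j-b_j|$ — all of this matches Theorem~A.5 and Lemmas~A.6–A.7 almost line for line. Where you genuinely diverge is the uniformity-over-$F$ step. You cover the operator-norm ball of $k\times k$ matrices directly, so your covering number has exponent $k^2$, and you attribute the $k^2$ in the final rate to that net. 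The paper instead observes that each characteristic-function term depends on $F$ only through a single $k$-dimensional vector ($F^T\bt$ for the joint term, $t_jF(j,:)$ for the $j$-th marginal), reduces the supremum over $\mathcal{F}$ to a supremum over the unit ball of $\mathbb{R}^k$, and runs symmetrization plus a one-step Dudley discretization in the data-dependent metric $d(\bu,\bu')^2=\sum_i|\exp(i\bu^T\bx^{(i)})-\exp(i{\bu'}^T\bx^{(i)})|^2$. That reduction buys two things: a covering exponent of $k$ rather than $k^2$, and a process diameter of order $\min(n,n\Tr(\hat S))$, so the bound correctly shrinks with the signal scale; in the paper the $k^2$ and the $\log^2$ then arise from restricting $\bt$ to the high-probability ball $\|\bt\|\le\sqrt{2k\log n}$ (paying $2/n$ on the complement, since $\Delta\le1$) combined with the $k$ from the vector-ball covering and from summing the $k$ marginals. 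Your cruder Hoeffding-plus-union-bound over the matrix net still lands on a bound of the claimed order in the regime $k\|S\|\gtrsim 1$, but it loses the $\Tr(S)$ scaling and can exceed the stated rate when $S$ is very small. One further wrinkle: you cannot literally "perform the Gaussian integral over $\bt$ in closed form" on the increment, because after the triangle inequality the modulus sits inside $\E_{\bt}$; you must either bound pointwise in $\bt$ and integrate the bound (as the paper does) or pass through a second moment via Cauchy--Schwarz. Both are routine fixes, so I would classify your proposal as a correct but coarser variant of the paper's argument rather than as containing a gap.
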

This bound shows that uniformly over $F$, the empirical average 
$\E_{\bt\in N(0,I_k)}\Delta(\bt,F|\hat{P})$, is close to the population score. This guarantees that as long as the difference between the population scores of the two candidate algorithms is not too small, the meta-algorithm can pick up the better score.
\begin{remark}[Subgaussianity assumption]
The subgaussianity assumption in Theorem~\ref{theorem:uniform_convergence} simply ensures the concentration of the sample covariance matrix since it is used in the score (see Theorem A.3, line 825). It can be relaxed if the concentration in the operator norm is ensured. Appendix Section~\ref{subsection:supergaussian} contains experiments with more super-Gaussian source signals.
\end{remark}
The proof of this result is deferred to the Appendix section \ref{subsection:ica_uniform_convergence}. It is important to note that $\Delta(\bt,F|\hat{P})$ is not easy to optimize. As we show in Section~\ref{subsection:properties_contrast}, objective functions that are computationally more amenable to optimize for ICA, e.g., cumulants, satisfy some properties (see Assumption~\ref{assump:third}). The independence score does not satisfy the first property (Assumption~\ref{assump:third} (a)).  In the \textit{noiseless} case, whitening reduces the problem to $B$ being orthogonal. This facilitates efficient gradient descent in the Stiefel manifold of orthogonal matrices~\cite{chen2005ICA}. However, in the noisy setting, the prewhitening matrix contains the unknown noise covariance $\Sigma$, making optimization hard. Furthermore, as noted in Remark~\ref{remark:average_t}, in practice, it is better to use $\E_{\bt\sim N(0,I_k)}\Delta(\bt,F|\hat{P})$~\cite{chen2005ICA,ERIKSSON20032195,HENZE19971} instead of using a fixed vector, $\bt$, to evaluate $\Delta(\bt,F|\hat{P})$. Any iterative optimization method requires repeatedly estimating this expectation at each gradient step. Therefore, instead of using this score directly as the optimization objective, we use it to choose between different contrast functions after extracting the demixing matrix with each. This process is referred to as the Meta algorithm (Algorithm \ref{alg:meta}). \bk

\begin{figure}[h]
    \vspace{-10pt}
    \begin{minipage}{0.45\textwidth}
        \begin{algorithm}[H]
            \caption{\label{alg:meta} Meta-algorithm for choosing best candidate algorithm.}
            \begin{algorithmic}
                \STATE \textbf{Input}: Algorithm list $\mathcal{L}$, Data $X \in \mathbb{R}^{n \times k}$
                \FOR{$j$ in range[1, $\text{size}\bb{\mathcal{L}}$]}
                    \STATE{$B_{j} \leftarrow \mathcal{L}_{j}\bb{X}$}
                    \COMMENT{\text{Extract mixing matrix} $B_{j}$ \text{using } $j^{th}$ \text{candidate algorithm}}
                    \STATE{$\delta_{j} \leftarrow \widehat{\E}_{\bt \sim \mathcal{N}\bb{0,\bm{I}_{k}}}\bbb{\Delta\bb{\mathbf{t}, B_{j}^{-1}|\hat{P}}}$}
                \ENDFOR
                \STATE{$i_{*} \leftarrow \argmin_{j \in [\text{size}\bb{\mathcal{L}}]}\bbb{\delta_{j}}$}
                \RETURN $B_{i_{*}}$
            \end{algorithmic}
        \end{algorithm}
    \end{minipage}
    \hfill
    \begin{minipage}{0.45\textwidth}
        \centering
        \includegraphics[width=\textwidth]{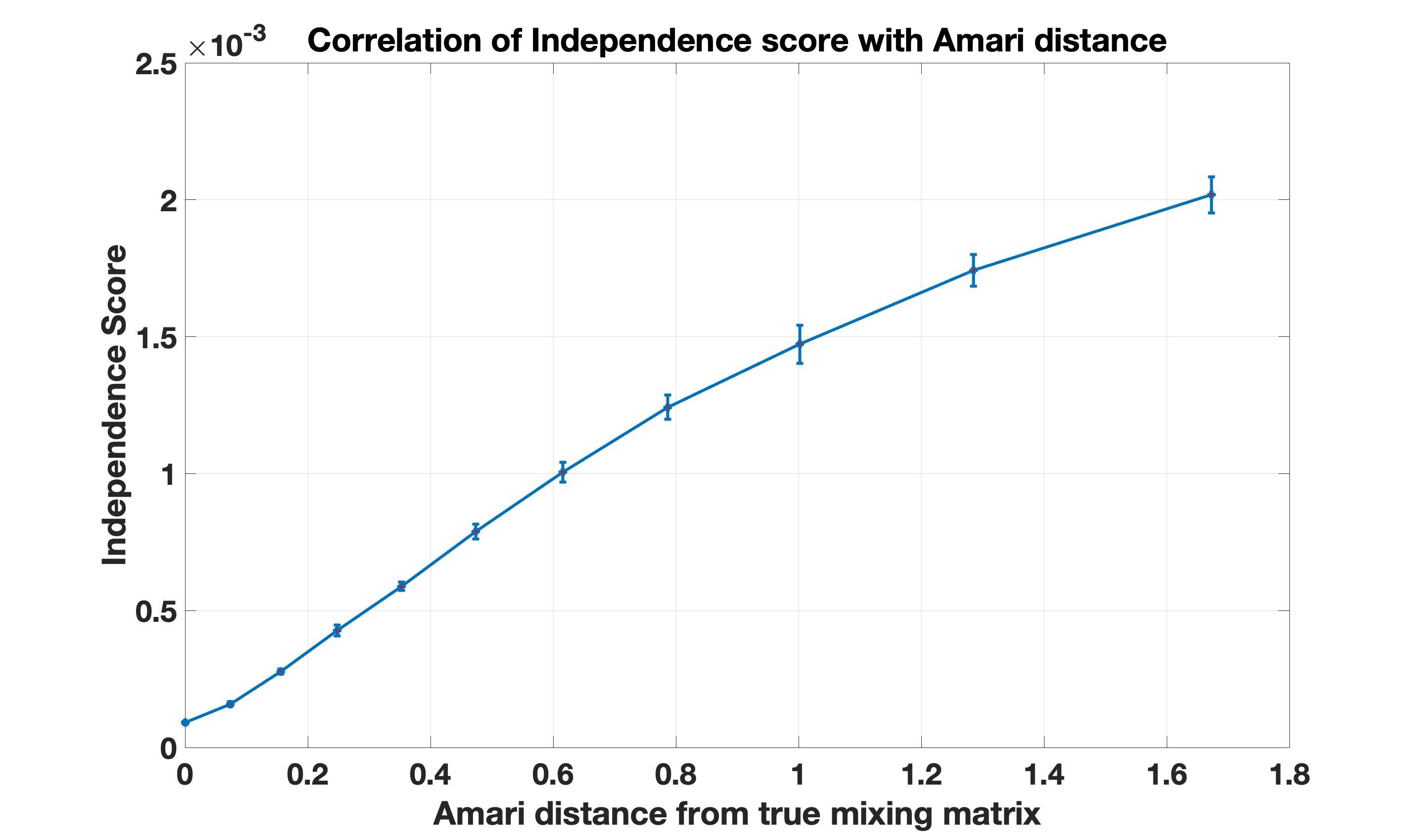}
        \caption{Correlation of independence score (with std. dev.) with Amari error between $B' = \epsilon B + \bb{1-\epsilon}I$ and $B$, averaged over 10 random runs.}
        \label{fig:amari_ind}
    \end{minipage}
\end{figure}
\begin{remark}[Averaging over $\bt$]\label{remark:average_t}
Algorithm~\ref{alg:meta} averages the independence score over $\bt\sim \mathcal{N}\bb{0,\bm{I}_{k}}$. While Eq~\ref{eq:score} defines the independence score for one direction $\bt$, in practice, however, there may be some directions such that a non-gaussian signal has a small score in that direction. Hence, it is desirable to average over $\bt$, following the convention in~\cite{chen2005ICA,ERIKSSON20032195,HENZE19971}.
\end{remark}
To gain an intuition of the independence score, we conduct an experiment where we use the dataset mentioned in Section \ref{section:experiments} (Figure \ref{exp:3}) and compute the independence score for $B' = \epsilon B + \bb{1-\epsilon}I, \epsilon \in \bb{0.5, 1}$. As we increase $\epsilon$, $B'$ approaches $B$, and hence the Amari error $d_{B', B}$ (see Eq~\ref{eq:amari_error}) decreases. Figure \ref{fig:amari_ind} shows the independence score versus Amari error, indicating that the independence score accurately predicts solution quality even without knowing the true $B$.
\subsection{Desired properties of contrast functions}
\label{subsection:properties_contrast}
The following properties are often useful for designing provable optimization algorithms for ICA.
\begin{assumption}[Properties of contrast functions]\label{assump:third}
    Let $f$ be a contrast function defined for a mean zero random variable $X$. Then, 
    \begin{enumerate}[leftmargin=20pt,align=left,labelwidth=\parindent,labelsep=2pt]
    \vspace{-5pt}
      \itemsep0em 
      \item[(a)] $f(u|X+Y)=f(u|X)+f(u|Y)$ for $X \indep Y$ 
        \item[(b)]  $f(0|X)=0, f'(0|X)=0$, $f''(0|X)=0$
        \item[(c)]  $f(u|G)=0,\forall u$ for $G\sim\mathcal{N}\bb{0,1}$
        \item[(d)] WLOG, $f(u|X)=f(-u|X)$ (symmetry)
    \end{enumerate}
\end{assumption}
Properties (a) and (c) ensure that $f(u|G+X)=f(u|X)$ for non-gaussian $X$ and independent non-gaussian $G$, which means that the additive independent Gaussian noise does not change $f$. Property (c) ensures that $|f(u|G)|$ is minimal for a Gaussian.  Property (d) holds without loss of generality because one can always  symmetrize the distribution.
\footnote{Note that $y_i := x_i-x_{\lfloor n/2 \rfloor +i}, \; i = 1\cdots \lfloor n/2 \rfloor$ has a symmetric distribution, and also remains a noisy version of a linear combination of source signals with the same mixing matrix.}
\subsection{New contrast functions}\label{section:chf_cgf_fn}
In Section~\ref{sec:background}, we provided a discussion of the individual shortcomings of different contrast functions for existing contrast functions.  Before we introduce new contrast functions in this section, we revisit the algorithmic issues posed by the added Gaussian noise with unknown $\Sigma$ in Eq~\ref{eq:ICA_noise_model}.

Prewhitening the data is challenging for noisy ICA because $\E[\bx\bx^T]$ includes the unknown Gaussian covariance matrix $\Sigma$. ~\cite{arora2012Quasi} show that it is enough to \textit{quasi} orthogonalize the data, i.e., multiply it by a matrix, which makes the data have a diagonal covariance matrix (not necessarily the identity). Subsequent work~\cite{NIPS2013_4d2e7bd3, NIPS2015_89f03f7d} uses cumulants and the Hessian of $f(\bu)$ to construct a matrix $C := BDB^T$ where $D$ is a diagonal matrix, and then use this matrix to achieve quasi-orthogonalization. In general, $D$ may not be positive semidefinite (e.g. when components of $\bz$ have kurtosis of different signs). 
To remedy this, in ~\cite{ NIPS2015_89f03f7d}, the authors propose computing the $C$ matrix using the Hessian of $f(\bu)$ at some fixed unit vector and then perform an elegant power method in the pseudo-Euclidean space:
\ba{
\bu_{t} \leftarrow \left.\nabla f(C^{\dag}\bu_{t-1}|P)\right/\|\nabla f(C^{\dag}\bu_{t-1}|P)\| \label{eq:power}
}
A pseudo-Euclidean space is a generalization of the Euclidean space used by \cite{DBLP:journals/corr/VossBR15}. Here the produce between vectors $\bu,\bv$ is given as $u^{\top}Av$,
\begin{algorithm}[htb]
    \caption{\label{alg:belkin_algo}Pseudo-Euclidean Power Iteration for ICA (Algorithm 2 in \cite{DBLP:journals/corr/VossBR15}) $\tilde{B}$ is the recovered matrix for the ICA model in Eq~\ref{eq:ICA_noise_model}. $\tilde{A}$ is a running estimate of  $\tilde{B}^{\dagger}$.}
    \begin{algorithmic}
        \STATE \textbf{Input}: $C \in \mathbb{R}^{k \times k}$, $\nabla f$
        \STATE $\tilde{A} \leftarrow 0$, $\tilde{B} \leftarrow 0$
        \FOR{$j$ in range[1, $k$]}
            \STATE Draw $\mathbf{u}$ uniformly at random from $\mathcal{S}^{k-1}$
            \WHILE{Convergence (up to sign)}
                \STATE $\mathbf{u} \leftarrow \tilde{B}\tilde{A}\mathbf{u}$
                \STATE $\mathbf{u} \leftarrow \nabla f\bb{C^{\dagger}\mathbf{u}}/\norm{\nabla f\bb{C^{\dagger}\mathbf{u}}}_{2}$
            \ENDWHILE
            \STATE{$\tilde{B}_{j} \leftarrow \mathbf{u}$, $\tilde{A}_{j} \leftarrow \bbb{C^{\dagger}\tilde{B}_{j}/\bb{\bb{C^{\dagger}\tilde{B}_{j}}^{\top}\tilde{B}_{j}}}^{\top}$}
        \ENDFOR
        \RETURN $\tilde{B}, \tilde{A}$
    \end{algorithmic}
\end{algorithm} 

In this section, we present two new contrast functions based on the logarithm of the characteristic function (CHF) and the cumulant generating function (CGF). These do not depend on a particular cumulant, and the first only requires a finite second moment, which makes it suitable for heavy-tailed source signals.
\bk
We will use $f(\bu|P)$ or $f(\bu|\bx)$ where $\bx\sim P$ interchangeably to represent the population contrast function. In constructing both of the following contrast functions, we use the fact that, like the cumulants, the CGF and CHF-based contrast functions satisfy Assumption~\ref{assump:third} (a). To satisfy Assumption~\ref{assump:third} (c), we subtract out the part resulting from the Gaussian noise, which leads to the additional terms involving $\bu^{\top}S\bu$.

\textbf{CHF-based contrast function:} Recall that $S$ denotes the covariance matrix of $\bx$. We maximize the \textit{absolute value} of following:
\ba{
f(\bu|P) &= \log\E\exp(i\bu^T\bx)+\log \E\exp(-i\bu^T\bx) 
         + \bu^TS\bu\notag\\ 
 &=\log(\E\cos(\bu^T\bx))^2+\log (\E\sin(\bu^T\bx))^2+\bu^TS\bu\label{eq:logchf}
}
The intuition is that this is exactly zero for zero mean Gaussian data and maximizing this leads to extracting non-gaussian signal from the data. It also satisfies Assumption~\ref{assump:third} a), b) and c).
\textbf{CGF-based contrast function:} The cumulant generating function also has similar properties (see~\cite{YEREDOR2000897} for the noiseless case). In this case, we maximize the \textit{absolute value} of following:
\ba{
f(\bu|P) &= \log \E\exp(\bu^T\bx) - \frac{1}{2}\bu^TS\bu \label{eq:cgf}
}
Like CHF, this vanishes iff $\bx$ is mean zero Gaussian, and satisfies Assumption~\ref{assump:third} a), b) and c). However, it cannot be expected to behave well in the heavy-tailed case. In the Appendix (Section~\ref{section:proofs},  Theorem~\ref{theorem:CGF_CHF_quasi_orth}), we show that the Hessian of both functions obtained from Eq~\ref{eq:logchf} and Eq~\ref{eq:cgf} evaluated at some $\bu$ is, in fact, of the form $BDB^T$ where $D$ is some diagonal matrix. 
\section{Global and local Convergence: loss landscape of noisy ICA} \label{sec:convergence}
Here we present sufficient conditions for global and local convergence of a broad class of contrast functions. This covers the cumulant-based contrast functions and our CHF and CGF-based proposals. \bk

\subsection{Global convergence}
\label{subsection:global_convergence}
The classical argument for global optima of kurtosis for noiseless ICA in~\cite{conf/icassp/DelfosseL96} assumes WLOG that $B$ is orthogonal. This reduces the optimization over $\bu$ into one for $\bv=B\bu$. Since $B$ is orthogonal, $\|\bu\|=1$ translates into $\|\bv\|=1$. It may seem that this idea should extend to the noisy case due to property~\ref{assump:third} a) and c) by optimizing over $\bv=B\bu$ over potentially non-orthogonal mixing matrices $B$.

However, for non-orthogonal $B$, the norm constraint on $\bv$ is no longer $\bv^T\bv=1$, \textit{rendering the original argument invalid}. In what follows, we extend the original argument to the noisy ICA setting by introducing pseudo-euclidean constraints. Our framework includes a large family of contrast functions, including cumulants. 

To our knowledge, this is the first work to characterize the loss functions in the pseudo-euclidean space. In contrast,~\cite{NIPS2013_4d2e7bd3} provides global convergence of the cumulant-based methods by a convergence argument of the power method itself.

Consider the contrast function $f\bb{\bu|P} := \E_{\bx\sim P}\bbb{g\bb{\bx^{T}\bu}}$ and recall the definition of the quasi orthogonalization matrix $C = BDB^{T}$, where $D$ is a diagonal matrix. For simplicity, WLOG let us assume that $D$ is invertible. 
We now aim to find the optimization objective that leads to the pseudo-Euclidean update in Eq~\ref{eq:power}.  For $f\bb{C^{-1}\bu|P} = \E_{\bx\sim P} \bbb{g\bb{\bu^{T}C^{-1}\bx}}$, consider the following:
\ba{
    &f\bb{C^{-1}\bu|P} = \sum_{i \in [k]}\E\bbb{g\bb{\bb{B^{-1}\bu}_{i}z_{i}/D_{ii}}} = \sum_{i \in [k]}\E \bbb{g\bb{\alpha_i\tilde{z}_i}} 
    = \sum_{i \in [k]}f\bb{\alpha_i/D_{ii}|z_i} \label{eq:powerobj}
}
where we define $\balpha := B^{-1}\bu$ and $\tilde{z}_i=z_i/D_{ii}$. 
We now examine $f(C^{-1}\bu)$ subject to the ``pseudo'' norm constraint $\bu^{T}C^{-1}\bu=1$. The key point is that for suitably defined $f$, one can construct a matrix $C=BDB^T$ from the data even when $B$ is unknown. So our new objective is to optimize
\bas{
 f\bb{C^{-1}\bu|P} \text{ s.t. } \bu^TC^{-1}\bu=1
}
 Using the Lagrange multiplier method, optimizing the above simply gives the power method update (Eq~\ref{eq:power})
$\bu\propto\nabla f(C^{-1}\bu)$. Furthermore, optimizing Eq~\ref{eq:powerobj} leads to the following transformed objective: 
\ba{
\max_{\balpha} \left|\sum_i f\bb{\left.\alpha_i/D_{ii}\right|z_i}\right|\quad\text{s.t.}\ \ \sum_i \alpha_i^2/D_{ii}=1 \label{eq:transformed_objective}
}
Now we provide our theorem about maximizing $f(C^{-1}\bu|P)$. Analogous results hold for minimization. We will denote the corresponding derivatives evaluated at $t_{0}$ as $f'(t_0|z_i)$ and $f''(t_0|z_i)$. 
\begin{theorem}
\label{theorem:global_convergence} 
Let $C$ be a matrix of the form $BDB^T$, where $d_{i} := D_{ii} = f''(u_i|z_i)$ for some random  $u_i$. Let $S_+=\{i:d_i>0\}$. Consider a contrast function $f:\mathbb{R}\rightarrow \mathbb{R}$. Assume that for every non-Gaussian independent component, Assumption~\ref{assump:third} holds and the third derivative, $f'''(u|X)$, does not change the sign in the half-lines $[0,\infty)$, $(\infty,0]$. Then $f\bb{\left.C^{-1}\bu\right|\bx}$ with the constraint of $\langle\bu,\bu\rangle_{C^{-1}}=1$ has local maxima at $B^{-1}\bu=\be_i,i\in S_+$. 
 All solutions satisfying $\be_i^T B^{-1}\bu\neq 0,\ \ \forall i \in [1,k]$ are minima, and all solutions with $|\left\{i : \be_i^TB^{-1}\bu\neq 0\right\}| < k$ are saddle points. These are the only fixed points.
\end{theorem}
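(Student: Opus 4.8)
The plan is to push everything through the change of variables $\balpha=B^{-1}\bu$ already set up in Eq~\ref{eq:transformed_objective}, which turns the constrained problem into maximizing the \emph{decoupled} objective $\Phi(\balpha):=\sum_{i}f\bb{\alpha_i/d_i\,|\,z_i}$ over the pseudo-sphere $\{\balpha:\sum_i \alpha_i^2/d_i=1\}$. Properties \ref{assump:third}(a) and (c) are exactly what make the objective separate across coordinates and annihilate the (at most one) Gaussian source, so the whole analysis reduces to understanding the one-dimensional maps $s\mapsto f(s\,|\,z_i)$ together with the indefinite quadratic constraint coming from $\langle\bu,\bu\rangle_{C^{-1}}=1$.

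First I would record the shape of each scalar contrast. Symmetry \ref{assump:third}(d) makes $f(\cdot|z_i)$ even, hence $f'(0|z_i)=f'''(0|z_i)=0$ automatically, and \ref{assump:third}(b) adds $f(0|z_i)=f''(0|z_i)=0$. Since $f'''(\cdot|z_i)$ keeps a constant sign on $[0,\infty)$, integrating three times from the origin shows that on $[0,\infty)$ the map $f(\cdot|z_i)$ is monotone and of a single sign, and that this sign coincides with the sign of $f''(u_i|z_i)=d_i$. Consequently every summand of $\Phi$ is nonnegative exactly for $i\in S_+$ and nonpositive for $i\notin S_+$, and each summand vanishes to \emph{fourth} order at the origin.

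Next I would locate the fixed points with a Lagrange multiplier: stationarity of $\Phi$ subject to $\sum_i\alpha_i^2/d_i=1$ reads $f'\bb{\alpha_j/d_j\,|\,z_j}=2\lambda\,\alpha_j$ for all $j$, which is precisely the power-iteration fixed-point relation behind Eq~\ref{eq:power}. Because $f'(\cdot|z_j)$ is odd and monotone on each half-line, each coordinate is forced to be either $0$ or a nonzero root of this scalar equation, so fixed points are indexed by their support $T=\{j:\alpha_j\neq0\}$. For the second-order test I would reparametrize by $\gamma_i:=\alpha_i^2/d_i$ (so the constraint becomes the affine slice $\sum_i\gamma_i=1$) and write $\Phi=\sum_i\phi_i(\gamma_i)$ with $\phi_i(\gamma)=f\bb{\sqrt{\gamma/d_i}\,|\,z_i}$. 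A direct computation gives $\phi_i''(\gamma)\propto \big(s\,f''(s|z_i)-f'(s|z_i)\big)/s^{3}$ with $s=\sqrt{\gamma/d_i}$; since the bracket vanishes at $s=0$ and has $s$-derivative $s\,f'''(s|z_i)$, its sign on $(0,\infty)$ equals that of $f'''$, so $\phi_i$ is convex for $i\in S_+$ and concave otherwise. Restricted to the $S_+$ block this makes $\Phi$ a sum of convex increasing functions on a simplex, which—exactly as in the classical Delfosse--Loubaton argument—is maximized only at the vertices $\balpha\propto\be_i$, $i\in S_+$; within that sector any support of size $\geq 2$ produces an indefinite restricted Hessian (a saddle) and the interior full-support stationary point a minimum, which would give the claimed classification and confirm there are no other stationary points.

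The main obstacle is the indefinite, non-compact nature of the constraint when $S_+\neq[k]$: unlike the orthogonal (all-$d_i>0$, spherical) case, turning on a coordinate $j\notin S_+$ \emph{relaxes} the constraint and lets an active $S_+$ coordinate grow, which naively looks as though it could raise $\Phi$. The delicate point is therefore the cross-sector second-order analysis at $\be_i$, $i\in S_+$: one must exploit that each off-axis summand vanishes to fourth order (by the shape lemma above) so that, after the $O(\alpha_j^2)$ constraint curvature is charged against the multiplier $\lambda$ in the bordered Hessian, the net quadratic form on the tangent space has the sign needed for a genuine local maximum of $|\Phi|$. This is precisely where working with the pseudo-Euclidean inner product $\langle\cdot,\cdot\rangle_{C^{-1}}$ rather than the Euclidean sphere is essential, and I expect it to be the step demanding the most care.
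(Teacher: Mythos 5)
Your reduction to the decoupled objective over the pseudo-sphere, the shape lemma obtained by integrating $f'''$ from the origin (this is exactly how the paper derives $\sgn(h_i)=\sgn(h_i')=\sgn(h_i'')$ on each half-line and hence $\sgn(d_i)=\sgn(h_i)$ and $\lambda>0$), and the Lagrange stationarity relation $h_j'\bb{\alpha_j/d_j}=\lambda\alpha_j$ all coincide with the paper's argument. Your within-block second-order computation via $\gamma_i=\alpha_i^2/d_i$ is also equivalent to the paper's: the bracket $s f''(s)-f'(s)$ you obtain is precisely the quantity appearing in the paper's diagonal Hessian entries $H_{ii}$ for $\alpha_i\neq 0$, with the same sign analysis via $sf'''(s)$.

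The gap is exactly where you flag it, and the route you sketch for closing it would not work. When $S_+\neq[k]$, the vertex $\balpha\propto\be_i$, $i\in S_+$, is generically \emph{not} a Euclidean local maximum of $\Phi$ on the constraint surface: turning on a coordinate $j$ with $d_j<0$ forces $\alpha_i^2/d_i$ to increase by $\alpha_j^2/|d_j|$ in order to keep $\sum_l\alpha_l^2/d_l=1$, which raises the active summand $h_i$ by $\Theta(\alpha_j^2)$ (since $h_i'>0$ there), while the new summand $h_j$ contributes only at fourth order. So the fourth-order vanishing you hope to exploit cannot rescue a bordered-Hessian, tangent-space sign argument: the second-order change of $\Phi$ (and of $|\Phi|$) in those directions is strictly positive. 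The paper never asks for Euclidean definiteness; it classifies stationary points by the signature of $\tilde H:=D^{-1}H$, i.e.\ by the sign of $\langle\bv,H\bv\rangle_{D^{-1}}$ where $H$ is the Hessian of the Lagrangian. In that pseudo-metric the zero coordinates give $\tilde H_{jj}=-\lambda/d_j^2<0$ and the nonzero coordinates give $\tilde H_{ii}>0$ regardless of the sign of $d_i$, which yields the stated maximum/saddle/minimum trichotomy in the $\langle\cdot,\cdot\rangle_{D^{-1}}$ sense that the theorem statement and the power iteration actually use. You correctly name the pseudo-Euclidean inner product as essential, but your concrete plan (net sign of the restricted quadratic form after charging the constraint curvature to $\lambda$) is the Euclidean one, so as written the classification of the $i\in S_+$ vertices --- the main claim of the theorem --- is not established.
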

Note that the above result immediately implies that for $\tilde{C}=-C$, the same optimization in Theorem~\ref{theorem:global_convergence}
will have maxima at all $\bu$ such that $B^{-1}\bu=\be_i,i\in S_-$.
\begin{corollary}
    $2k^{th}$ cumulant-based contrast functions for $k>1$, have maxima and minima at the directions of the columns of $B$, provided $\left\{\bz_{i}\right\}_{i \in [k]}$ have a corresponding non-zero cumulant.
\end{corollary}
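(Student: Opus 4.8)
The plan is to apply Theorem~\ref{theorem:global_convergence} to the specific contrast function arising from the $2k^{\text{th}}$ cumulant and verify that the hypotheses of that theorem hold. Recall that the $2k^{\text{th}}$ cumulant corresponds to a contrast function of the form $f(u|X) = \kappa_{2k}(uX) = u^{2k}\kappa_{2k}(X)$, where $\kappa_{2k}(X)$ denotes the $2k^{\text{th}}$ cumulant of $X$. The even-order cumulant is the natural candidate because Theorem~\ref{theorem:global_convergence} is stated for functions satisfying Assumption~\ref{assump:third}, and the homogeneity $f(u|X)=u^{2k}\kappa_{2k}(X)$ makes verification of each property essentially a one-line calculation.

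First I would verify Assumption~\ref{assump:third} for $f(u|X)=u^{2k}\kappa_{2k}(X)$. Property (a), additivity under independent sums, is precisely the defining additivity of cumulants: $\kappa_{2k}(X+Y)=\kappa_{2k}(X)+\kappa_{2k}(Y)$ for $X\indep Y$, and it survives the scalar multiplication by $u^{2k}$. Property (c), vanishing on Gaussians, holds because all cumulants of order greater than $2$ vanish for a Gaussian, and here $2k>2$ since $k>1$. Property (b) is immediate from the factor $u^{2k}$ with $2k\geq 4$: the function and its first two derivatives at $u=0$ vanish. Property (d), symmetry, holds because $u^{2k}=(-u)^{2k}$. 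Next I would record the derivative structure: $f''(u|X)=2k(2k-1)u^{2k-2}\kappa_{2k}(X)$ and $f'''(u|X)=2k(2k-1)(2k-2)u^{2k-3}\kappa_{2k}(X)$. Since $2k-3$ is odd and $u^{2k-3}$ is monotone on each half-line, $f'''$ does not change sign on $[0,\infty)$ or on $(-\infty,0]$ (its sign is fixed by that of $\kappa_{2k}(X)$), so the third-derivative hypothesis of Theorem~\ref{theorem:global_convergence} is satisfied for each independent component whose $2k^{\text{th}}$ cumulant is nonzero.

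With the hypotheses verified, I would then invoke Theorem~\ref{theorem:global_convergence} directly. The matrix $C=BDB^T$ with $d_i = f''(u_i|z_i)=2k(2k-1)u_i^{2k-2}\kappa_{2k}(z_i)$ has the required form, and the sign of $d_i$ matches the sign of $\kappa_{2k}(z_i)$ (since $u_i^{2k-2}\geq 0$). The theorem then places local maxima of $f(C^{-1}\bu|\bx)$ under the pseudo-norm constraint exactly at $B^{-1}\bu=\be_i$ for $i\in S_+$, i.e.\ the columns of $B$ indexed by components with positive $2k^{\text{th}}$ cumulant. Applying the same argument with $\tilde C=-C$ (as noted in the remark following Theorem~\ref{theorem:global_convergence}) yields maxima at the columns indexed by $S_-$, those with negative cumulant; equivalently these are minima of the original objective. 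Thus maxima and minima occur at the column directions of $B$ for every component with nonzero $2k^{\text{th}}$ cumulant.

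The main obstacle I anticipate is bookkeeping rather than conceptual: one must be careful that the nonvanishing-cumulant hypothesis for each component is exactly what guarantees $d_i\neq 0$, so that $D$ is invertible (or at least that the relevant diagonal entries are nonzero) and the ``WLOG $D$ invertible'' simplification in Theorem~\ref{theorem:global_convergence} genuinely applies to the components we care about. A component with $\kappa_{2k}(z_i)=0$ contributes a degenerate direction that the theorem does not classify as an isolated extremum, which is precisely why the corollary restricts to components with nonzero corresponding cumulant. I would make this restriction explicit and note that the partition $S_+\cup S_-$ accounts for all components with nonzero cumulant, so every such column direction is recovered as either a maximum or a minimum.
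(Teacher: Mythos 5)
Your proposal is correct and follows essentially the same route as the paper: the paper's proof simply observes that cumulants satisfy Assumption~\ref{assump:third} (a) and (c) by additivity and vanishing on Gaussians, and that $f(u|Z)=u^{2j}\kappa_{2j}(Z)$ handles the remaining properties, then invokes Theorem~\ref{theorem:global_convergence}. You spell out the verification (including the explicit sign analysis of $f'''$ on each half-line and the role of the nonzero-cumulant hypothesis in keeping $D$ invertible) in more detail than the paper does, but the underlying argument is identical.
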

\vspace{-10pt}
\begin{proof}
This proof (see Appendix Section~\ref{section:proofs})  is immediate since cumulants satisfy (a), (c). For (c) and (d), note that $f(u|Z)$ is simply $u^{2j}\kappa_{2j}(Z)$, where $\kappa_{2j}(Z)$ is the $2j^{th}$ cumulant of $Z$.
\end{proof}
Theorem~\ref{theorem:global_convergence} can be easily extended to the case where $C$ is not invertible. The condition on the third derivative, $f'''\bb{u|X}$, may seem strong, and it is not hard to construct examples of random variables $Z$ where this fails for suitable contrast functions (see~\cite{doi:10.1080/00031305.1994.10476058}). However, for such settings, it is hard to separate $Z$ from a Gaussian. See the Appendix~\ref{subsection:necessary_condition} for more details.
\subsection{Local convergence}
In this section, we establish a local convergence result for the power iteration-based method described in Eq~\ref{eq:power}. Define $\forall t \in \mathbb{R}, \forall i \in [d]$, the functions $q_{i}\bb{t} := f'\bb{\left.\frac{\alpha_i}{D_{ii}}\right|z_i}$.
Then, without loss of generality, we have the following result for the first corner:
\begin{theorem}
    \label{theorem:local_convergence}
    Let $\alpha_i=B^{-1}\bu_i$ and $\balpha^*:=\be_1$. Let the contrast function $f(.|X)$, satisfy assumptions \ref{assump:third} (a), (b), (c), and (d). Let $\mathcal{B} := [-\|B^{-1}\|_{2},\|B^{-1}\|_{2}]$ and define $c_{1}, c_{2}, c_{3} > 0$ such that $\forall i \in [d]$, $\sup_{x \in \mathcal{B}}\left\{\frac{|q_{i}\bb{x}|}{c_{1}}, \frac{|q_{i}'\bb{x}|}{c_{2}}, \frac{|q_{i}''\bb{x}|}{c_{3}}\right\} \leq 1$.
    Define $\epsilon := \frac{\norm{B}_{F}}{\norm{B\mathbf{e}_{1}}^{2}}\max\left\{\frac{c_{3}+c_{2}\norm{B\mathbf{e}_{1}}}{\left|q_{1}\bb{\alpha^{*}_{1}}\right|}, \frac{c_{2}^{2}}{\left|q_{1}\bb{\alpha^{*}_{1}}\right|^{2}}, \frac{c_{3}\norm{B\mathbf{e}_{1}}}{\left|q_{1}'\bb{\alpha^{*}_{1}}\right|}\right\}$.
    Let $\|\balpha_{0}-\balpha^{*}\|_{2} \leq R \label{assumption:2}$, $R \leq \max\left\{\frac{c_{2}}{c_{3}}, \frac{1}{5\epsilon}\right\}$. Then, we have $\forall t \geq 1, \; \dfrac{\left\|\balpha_{t+1}-\balpha^{*}\right\|}{\left\|\balpha_{t}-\balpha^{*}\right\|} \leq \frac{1}{2}$.
\end{theorem}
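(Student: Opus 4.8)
The plan is to pass from the power iteration in $\bu$-space (Eq~\ref{eq:power}) to an equivalent iteration in the transformed coordinates $\balpha=B^{-1}\bu$, where the fixed point and the structure imposed by Assumption~\ref{assump:third} become transparent. First I would record the gradient identity $\nabla f(C^{-1}\bu|P)=B\,\bq(\balpha)$, with $\bq(\balpha)=(q_1(\alpha_1),\dots,q_k(\alpha_k))^\top$. This follows by differentiating the separated form $f(C^{-1}\bu|P)=\sum_i f(\alpha_i/D_{ii}\,|z_i)$ of Eq~\ref{eq:powerobj} (valid by Assumption~\ref{assump:third}(a)), using $C=BDB^\top$ so that $C(B^{-1})^\top D^{-1}=B$ since $B^\top(B^{-1})^\top=I$. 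Consequently the update $\bu\mapsto \nabla f/\|\nabla f\|$ becomes, in $\balpha$-coordinates,
\[
\balpha_{t+1}=\frac{\bq(\balpha_t)}{\|B\,\bq(\balpha_t)\|},
\]
whose fixed direction is $\be_1$ (because the transverse coordinates of $\bq$ vanish there, see below); we take the inherited normalized fixed point to be $\balpha^*=\be_1$.

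The key structural observation, which I would isolate next, is that Property~\ref{assump:third}(b) forces the off-axis coordinates of $\bq$ to vanish to second order at $\balpha^*$. Indeed $q_i(0)=f'(0|z_i)=0$ and $q_i'(0)=D_{ii}^{-1}f''(0|z_i)=0$ for every $i$, so writing $\balpha_t=\be_1+\pmb\delta_t$ and Taylor-expanding coordinatewise gives $q_i(\alpha_{t,i})=\tfrac12 q_i''(\xi_i)\delta_{t,i}^2$ for $i\ge 2$, whence $\bigl\|\sum_{i\ge2}q_i(\alpha_{t,i})\be_i\bigr\|\le \tfrac{c_3}{2}\|\pmb\delta_t\|^2$, while the first coordinate $a:=q_1(\alpha_{t,1})=q_1(\alpha^*_1)+q_1'(\alpha^*_1)\delta_{t,1}+O(\delta_{t,1}^2)$ is bounded below in magnitude. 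Thus $\bq(\balpha_t)$ is, up to a quadratically small transverse perturbation, aligned with $\be_1$.

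I would then carry out the contraction estimate by splitting $\balpha_{t+1}-\balpha^*$ into its $\be_1$-component (a magnitude error) and its orthogonal complement (a direction error). The transverse part, after dividing by the normalizer $N:=\|B\,\bq(\balpha_t)\|$, is controlled by $\tfrac{c_3}{2}\|\pmb\delta_t\|^2/N$. For the longitudinal part the crucial point is that the contribution to $\bq(\balpha_t)$ that is merely linear in $\delta_{t,1}$ points along $\be_1$ and is therefore cancelled upon normalization: expanding $N=\|a\,B\be_1+B\sum_{i\ge2}q_i(\alpha_{t,i})\be_i\|$ shows that $a/N-1$ is again quadratic in $\|\pmb\delta_t\|$, with the non-orthogonality of $B$ entering only through $\|B\|_F$ and $\|B\be_1\|$. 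Collecting the constants — $c_3,c_2$ from the Taylor remainders and $|q_1(\alpha^*_1)|,|q_1'(\alpha^*_1)|$ from the lower bounds on $N$ and on the $\be_1$-coordinate — yields a bound of the form $\|\balpha_{t+1}-\balpha^*\|\le \epsilon\,\|\balpha_t-\balpha^*\|^2$ with exactly the $\epsilon$ of the statement. The constraint $\|\balpha_t-\balpha^*\|\le R\le \tfrac{1}{5\epsilon}$ then gives $\|\balpha_{t+1}-\balpha^*\|/\|\balpha_t-\balpha^*\|\le \epsilon R\le \tfrac15\le\tfrac12$; the auxiliary bound $R\le c_2/c_3$ keeps $\alpha_{t,1}$ in the regime where the first-coordinate expansion of $q_1$ is dominated by its linear term, and an induction (the ratio bound keeps $\balpha_{t+1}$ inside the $R$-ball) propagates the estimate for all $t\ge1$.

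The main obstacle I anticipate is the bookkeeping around the normalizer $N$: one must bound $N$ away from zero uniformly over the $R$-ball — which is where $|q_1(\alpha^*_1)|$, the upper bound $c_1$, and the requirement that iterates stay in $\mathcal{B}$ so that $c_1,c_2,c_3$ remain valid all enter — and one must verify that the linear-in-$\delta_{t,1}$ term genuinely cancels rather than merely being small, since otherwise only boundedness (not contraction) would follow along the $\be_1$ axis. Handling the sign ambiguity (convergence up to $\pm$) together with the non-orthogonality of $B$, so that the ``$\be_1$-component'' and ``transverse component'' are measured in the Euclidean metric while the normalization lives in the pushed-forward metric $\|B\cdot\|$, is the delicate quantitative core of the argument.
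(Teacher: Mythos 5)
Your proposal is correct and follows essentially the same route as the paper: both pass to $\balpha$-coordinates via $\nabla f(C^{-1}\bu|P)=B\bq(\balpha)$, exploit $q_i(0)=q_i'(0)=0$ to make the transverse coordinates quadratically small, lower-bound the normalizer by $(1-O(\epsilon\|\pmb\delta\|))\,|q_1(\alpha_1^*)|\,\|B\be_1\|$, and conclude $\|\balpha_{t+1}-\balpha^*\|\le \epsilon\|\balpha_t-\balpha^*\|^2$ so that $\epsilon R\le 1/5$ gives the ratio bound. The only cosmetic difference is that the paper runs the perturbation analysis through the mean-value theorem applied to the Jacobian of $g_i(\by)=q_i(y_i)/\|B\bq(\by)\|$ (with a five-case bound on its entries) rather than your direct Taylor expansion of numerator and denominator, and it (like you) elides the normalization mismatch between $\balpha^*=\be_1$ in the statement and the true fixed point $\be_1/\|B\be_1\|$ used in the proof.
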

Therefore, we can establish linear convergence without the third derivative constraint required for global convergence (see Theorem~\ref{theorem:global_convergence}), by assuming some mild regularity conditions on the contrast functional and a sufficiently close initialization. The proof is included in Appendix Section \ref{subsection:ica_local_convergence}.
\begin{remark}
   Let $\balpha=B^{-1}\bu$ denote the demixed direction, $\bu$.  Theorem~\ref{theorem:local_convergence} shows that if the initial $\bu_0$ is such that $\balpha_0$ is close to the ground truth $\balpha^*$ (which is WLOG taken to be $\be_1$), then there is geometric convergence to $\balpha^*$. $\left|q_1(\alpha_1^*)\right|$ and $\left|q_1'(\alpha_1^*)\right|$ essentially quantify how non-gaussian the corresponding independent component is since they are zero for a mean zero Gaussian. When these are large quantities, $\epsilon$ is small, and the initialization radius $\|\balpha_0-\balpha^*\|_2$ can be relatively larger. 
\end{remark}
\section{Experiments}
\label{section:experiments}
In this section, we provide experiments to compare the fixed-point algorithms based on the characteristic function (CHF), the cumulant generating function (CGF) (Eqs~\ref{eq:logchf} and~\ref{eq:cgf}) with the kurtosis-based algorithm (PEGI-$\kappa_{4}$ \cite{NIPS2015_89f03f7d}). We also compare against noiseless ICA\footnote{MATLAB implementations (under the GNU General Public License) can be found at - \href{https://research.ics.aalto.fi/ica/fastica/}{FastICA} and \href{https://github.com/ruohoruotsi/Riddim/blob/master/MATLAB/jade.m}{JADE}. The code for PFICA was provided on request by the authors.} algorithms - FastICA, JADE, and PFICA. These six algorithms are included as candidates for the Meta algorithm (see Algorithm \ref{alg:meta}). We also present the \textit{Uncorrected Meta} algorithm (\textit{Unc.\ Meta}) to denote the Meta algorithm with the uncorrected independence score proposed in CHFICA ~\cite{ERIKSSON20032195}. Table~\ref{table:kurtosis} and Figure~\ref{fig:plots_exp} provide experiments on simulated data, and Figure~\ref{figure:image_demixing_experiment} provides an application for image demixing \cite{deVito_ICA_for_demixing_images}. We provide additional experiments on denoising MNIST images in the Appendix Section~\ref{section:ica_additional_experiments}.

\textbf{Experimental setup:}
Similar to~\cite{NIPS2015_89f03f7d}, the mixing matrix $B$ is constructed as $B = U\Lambda V^{T}$, where $U,V$ are $k$ dimensional random orthonormal matrices, and $\Lambda$ is a diagonal matrix with $\Lambda_{ii}\in[1,3]$. The covariance matrix $\Sigma$ of the noise $\bg$ follows the Wishart distribution and is chosen to be $ \frac{\rho}{k} RR^{T}$, where $k$ is the number of sources, and $R$ is a random Gaussian matrix. Higher values of the \textit{noise power} $\rho$ can make the noisy ICA problem harder. Keeping $B$ fixed, we report the median of 100 random runs of data generated from a given distribution (different for different experiments). The quasi-orthogonalization matrix for CHF and CGF is initialized as $\hat{B}\hat{B}^T$ using the mixing matrix, $\hat{B}$, estimated via PFICA. The performance of CHF and CGF is based on a single random initialization of the vector in the power method (see Algorithm~\ref{alg:belkin_algo}). Our experiments were performed on a Macbook Pro M2 2022 CPU with 8 GB RAM.

\textbf{Error Metrics:} Due to the inherent ambiguity in signal recovery discussed in Section~\ref{sec:background}, we measure error using the accuracy of estimating $B$. 
We report the widely used Amari error~\cite{amari1995new,hastie2002ICA,chen2005ICA,chen2006ICA} for our results. For estimated demixing matrix $\widehat{B}$, define $W = \widehat{B}^{-1}B$, after normalizing the rows of $\widehat{B}^{-1}$ and $B^{-1}$. Then we measure $d_{\widehat{B}, B}$ as - 
\ba{\label{eq:amari_error}
    d_{\widehat{B}, B}&:= \frac{1}{k}\bb{\sum_{i=1}^{k}\frac{\sum_{j=1}^{k}|W_{ij}|}{\max_{j}|W_{ij}|} + \sum_{j=1}^{k}\frac{\sum_{i=1}^{k}|W_{ij}|}{\max_{i}|W_{ij}|}} - 2 
}
\textbf{Variance reduction using Meta:} We first show that the independence score can also be used to pick the best solution from many random initializations. In Figure~\ref{fig:plots_exp} (a), the top panel has a histogram of 40 runs of CHF, each with one random initialization. The bottom panel shows the histogram of 40 experiments, where, for each experiment, the \textit{best out of 30 random initializations} are picked using the independence score. The top and bottom panels have (mean, standard deviation) (0.51, 0.51) and (0.39,0.34) respectively.  This shows a reduction in variance and overall better performance. 

\textbf{Effect of varying kurtosis:}
For our second experiment (see Table~\ref{table:kurtosis}), we use $k=5$ independent components, sample size $n=10^5$ and noise power $\rho=0.2$ from a Bernoulli($p$) distribution, where we vary $p$ from 0.001 to $0.5-1/\sqrt{12}$. The last parameter makes kurtosis zero.
\begin{table*}[!hbt]
    \hspace{-1cm}
   \begin{tabular}{|c|c|c|c|c|c|c|c|c|c|c|}\hline
    \vspace{1pt}
   \diagbox[width=9em]{\textbf{Algorithm}}{\textbf{Scaled} $\kappa_4$}
     & \textbf{994} & \textbf{194} & \textbf{95} & \textbf{15} & \textbf{5} & \textbf{2} & \textbf{0.8} & \textbf{0.13} & \textbf{0} \\
    \hline
    \hline
    \vspace{1pt}
    \textbf{Meta} & \textbf{\rd 0.007 \bk} & \textbf{\rd 0.010 \bk } & \textbf{\rd 0.011 \bk} & \textbf{\rd 0.010 \bk} & \textbf{\rd 0.011 \bk} & \textbf{\rd 0.011 \bk} & \textbf{\rd 0.0128 \bk} & \textbf{\rd 0.01981 \bk} & \textbf{\rd 0.023 \bk} \\\hline
    \textbf{CHF} & 1.524 & 0.336 & \textbf{0.011} & \textbf{0.010} & \textbf{0.011} & \textbf{0.011} & \textbf{0.0129} & \textbf{0.0213} & 0.029 \\\hline
    \textbf{CGF} & \textbf{0.007} & 0.011 & \textbf{0.011} & 0.016 & 0.029 & 0.044 & 0.05779 & 0.06521 & 0.071 \\\hline
    \textbf{PEGI} & \textbf{0.007} & \textbf{0.010} & \textbf{0.011} & \textbf{0.010} & 0.012 & 0.017 & 0.02795 & 0.13097 & 1.802 \\\hline
    \textbf{PFICA} & 1.525 & 0.885 & 0.540 & 0.024 & 0.023 & 0.023 & 0.0212 & 0.0224 & \textbf{0.024} \\\hline
    \textbf{JADE} & 0.021 & 0.022 & 0.021 & 0.022 & \text{0.022} & 0.023 & 0.029 & 0.089 & 1.909 \\\hline
    \textbf{FastICA} & 0.024 & 0.027 & 0.026 & 0.026 & 0.026 & 0.027 & 0.02874 & 0.0703 & - \\\hline
   \textbf{Unc. Meta} & 1.52 & 0.049 & 0.041 & 0.0408 & 0.0413 & 0.0414 & 0.0413 & 0.0416 & 0.0419 \\\hline
    \end{tabular}
    \caption{Median Amari error with varying $p$ in Bernoulli$(p)$ data. The scaled kurtosis is given as $\kappa_{4} := (1-6p\bb{1-p})/(p\bb{1-p})$. Observe that the Meta algorithm (shaded in red) performs at par or better than the best candidate algorithms. FastICA did not converge for zero-kurtosis data. }
    \vspace{-0.5pt}
    \label{table:kurtosis}
\end{table*}
Different algorithms perform differently (best candidate algorithm is highlighted in bold font) for different $p$. In particular, characteristic function-based methods like PFICA and CHF perform poorly for small values of $p$. We attribute this to the fact that the characteristic function is close to one for small $p$. Kurtosis-based algorithms like PEGI, JADE, and FASTICA perform poorly for kurtosis close to zero. Furthermore, the Uncorrected Meta algorithm performs worse than the Meta algorithm since it shadows PFICA.

\textbf{Effect of varying noise power:}
For our next two experiments, we use $k=9$, out of which 3 are from $\text{Uniform}\bb{-\sqrt{3},\sqrt{3}}$, 3 are from Exponential$(5)$ and 3 are from $\bb{\text{Bernoulli}\bb{\frac{1}{2} - \sqrt{\frac{1}{12}}}}$ and hence have zero kurtosis. In these experiments, we vary the sample size (Figure \ref{exp:3}), $n$ fixing $\rho=0.2$, and the noise power (Figure \ref{exp:2}) , $\rho$ fixing $n=10^5$, respectively. Note that with most mixture distributions, it is easily possible to have low or zero kurtosis. We include such signals in our data to highlight some limitations of PEGI-$\kappa_{4}$ and show that Algorithm \ref{alg:meta} can choose adaptively to obtain better results.
\begin{figure}[!hbt]
    \centering
    \begin{subfigure}[t]{0.43\textwidth}
        \centering
        \includegraphics[width=\textwidth]{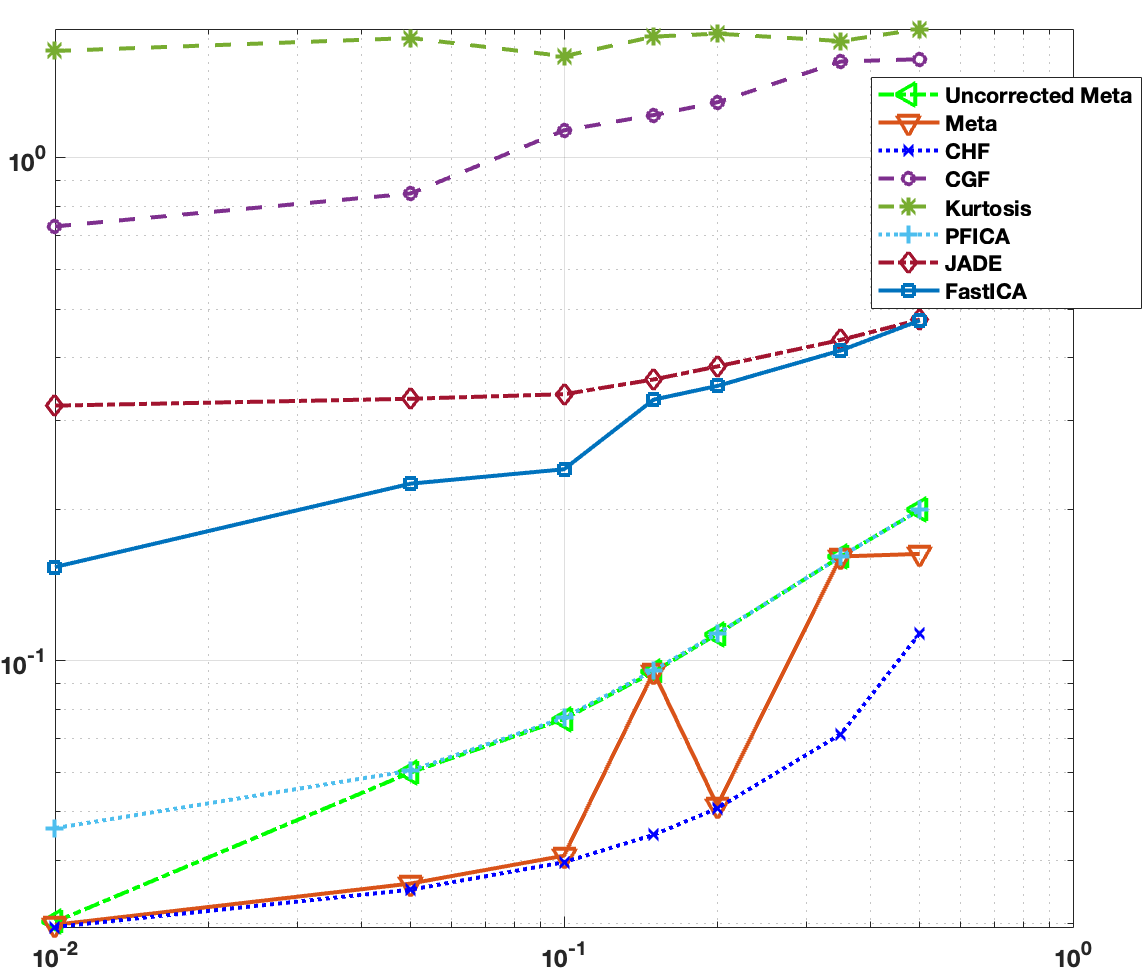}
        \caption{Variation of performance with noise power}
        \label{exp:2}
    \end{subfigure}
    \begin{subfigure}[t]{0.43\textwidth}
        \centering
        \includegraphics[width=\textwidth]{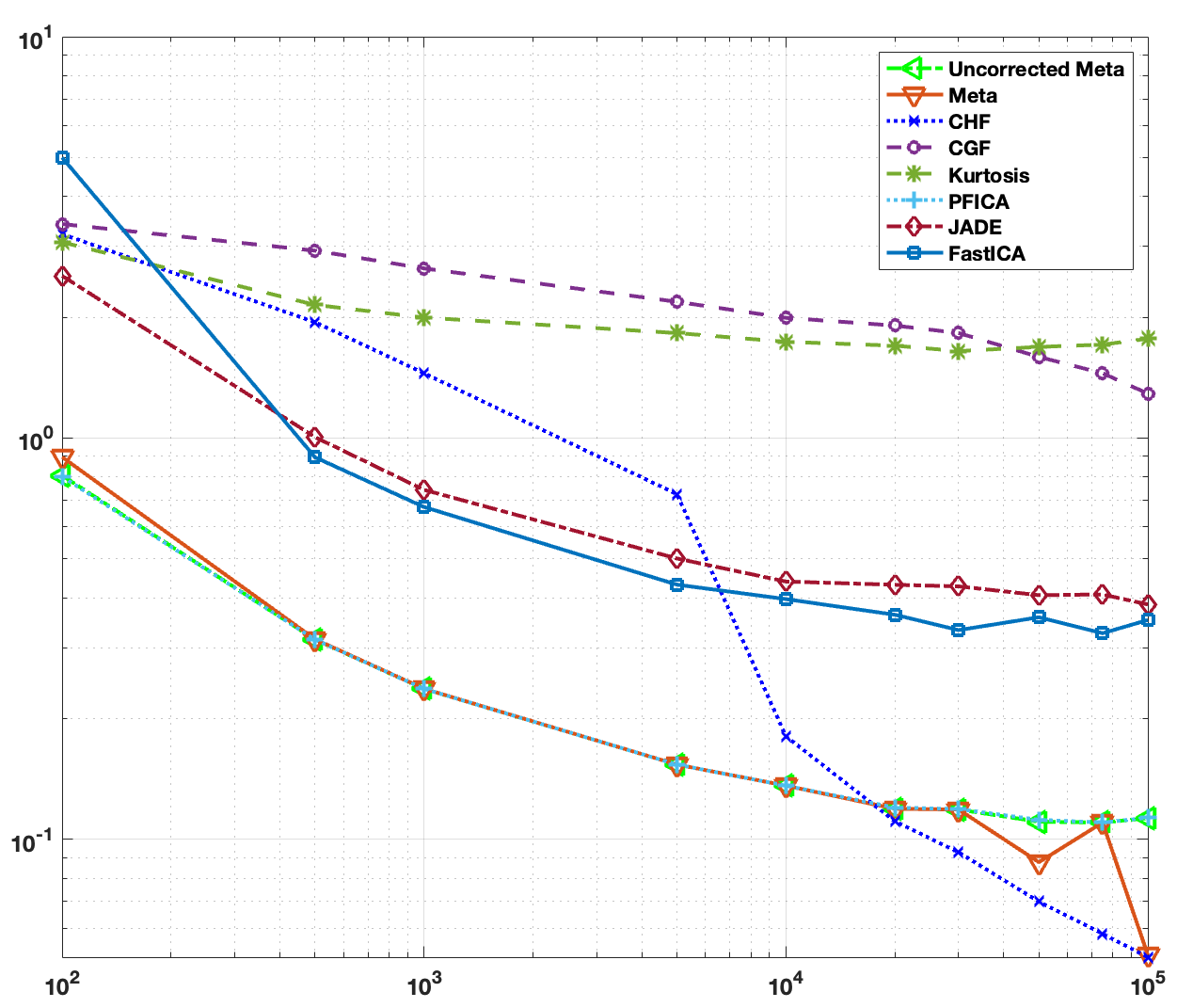}
        \caption{Variation of performance with sample size}
        \label{exp:3}
    \end{subfigure}
    \\[1em] 
    \begin{subfigure}[t]{0.48\textwidth}
        \centering
        \includegraphics[width=\textwidth]{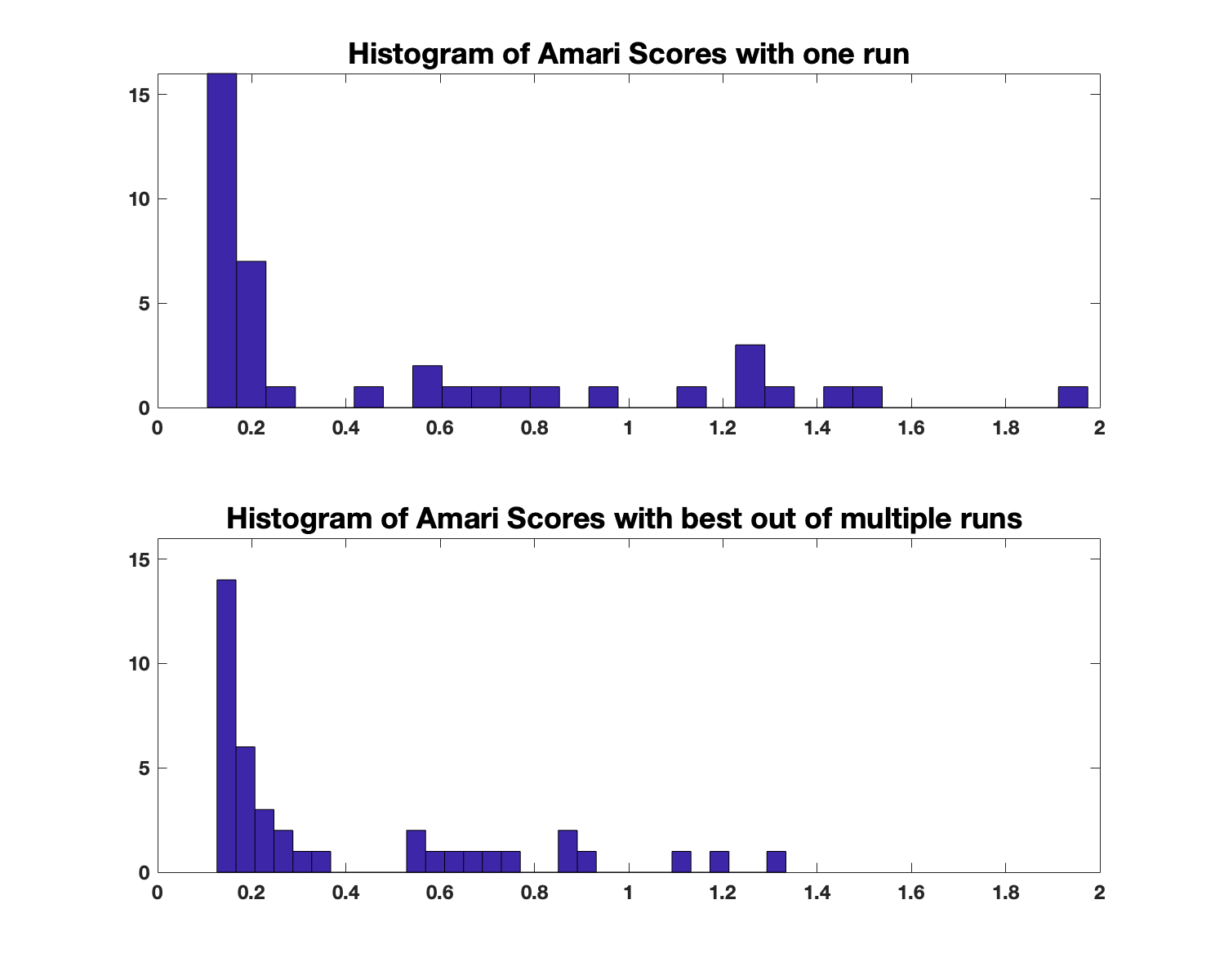}
        \caption{Histograms of Amari error with $n=10^4$ and noise power $\rho=0.2$}
        \label{exp:1}
    \end{subfigure}
    \vspace{5pt}
    \caption{\label{fig:plots_exp} Amari error in the $\log$-scale on $y$ axis and varying noise powers (for $n=10^5$) and varying sample sizes (for $\rho=0.2$) on $x$ axis for figures \ref{exp:2} and \ref{exp:3} respectively. For figure~\ref{exp:1}, the top panel contains a histogram of 40 runs with one random initialization. The bottom panel contains a histogram of 40 runs, each of which is the best independence score out of 30 random initializations.}
\end{figure}
Figure~\ref{exp:2} shows that large noise power leads to worse performance for all methods. PEGI, JADE, and FastICA perform poorly consistently, because of some zero kurtosis components. CGF suffers because of heavy-tailed components. However, CHF and PFICA perform well consistently. The Meta algorithm mostly picks the best algorithm except for a few points, where the difference between the two leading algorithms is small. The Uncorrected Meta algorithm (which uses the independence score without the additional correction term introduced for noise) performs identically to PFICA.

\textbf{Effect of varying sample size:} Figure~\ref{exp:3} shows that the noiseless ICA methods have good performance at smaller sample sizes. However, they suffer a bias in performance compared to their noiseless counterparts as the sample size increases. The Meta algorithm can consistently pick the best option amongst the candidates, irrespective of the distribution, leading to significant improvements in performance. What is interesting is that up to a sample size of $10^4$, PFICA dominates other algorithms and Meta performs like PFICA. However, after that CHF dominates, and one can see that Meta starts to have a similar trend as CHF. We also see that the Uncorrected Meta algorithm (which uses the independence score without the additional correction term introduced for noise) has a near identical error as PFICA and has a bias for large $n$.

\begin{figure}[h!]
    \centering
    \begin{subfigure}[b]{\textwidth}
        \centering
        \begin{tabular}{cccc}
            \includegraphics[width=0.2\textwidth]{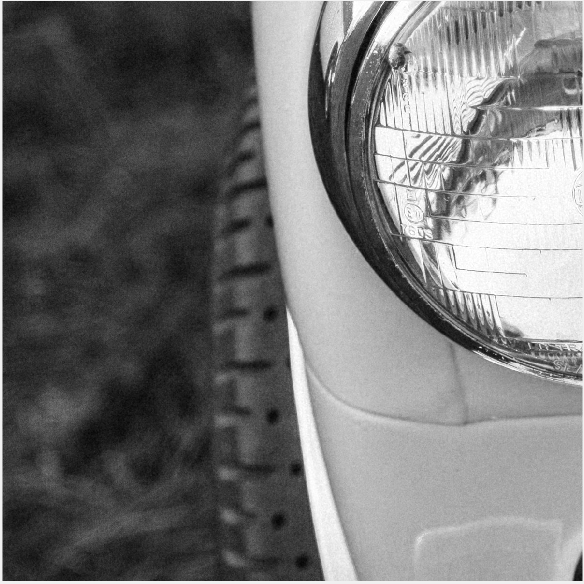} &
            \includegraphics[width=0.2\textwidth]{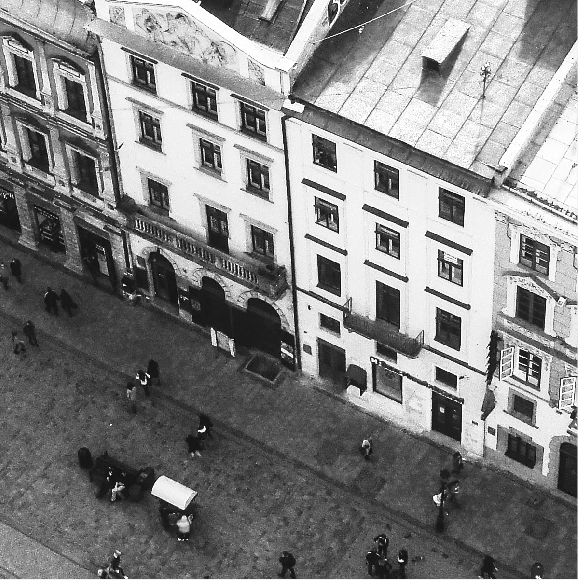} &
            \includegraphics[width=0.2\textwidth]{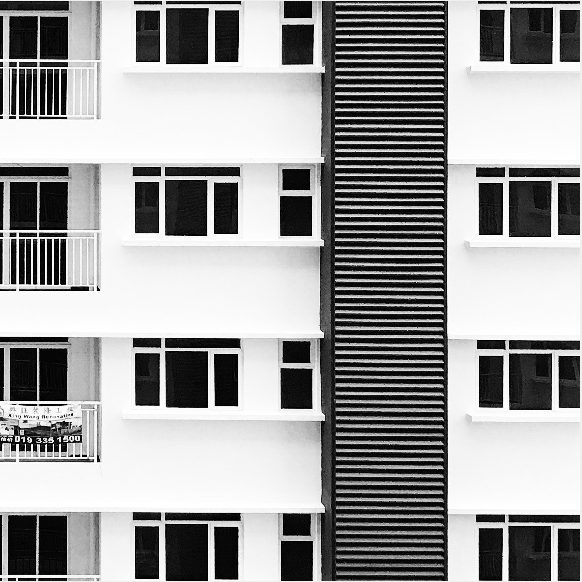} &
            \includegraphics[width=0.2\textwidth]{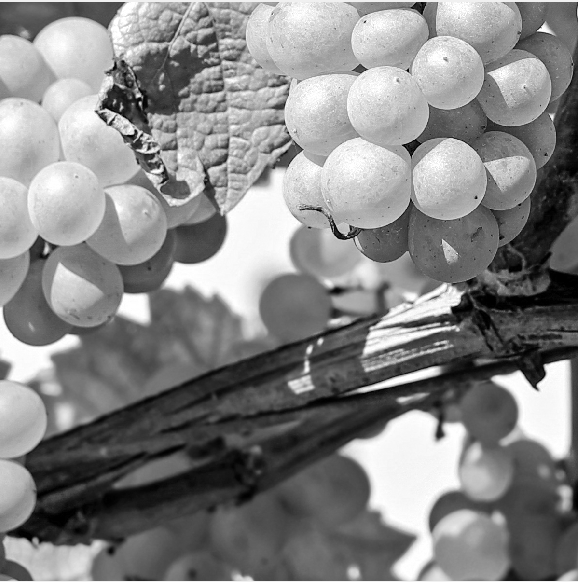} \\
        \end{tabular}
        \caption{Source Images}
        \vspace{5pt}
    \end{subfigure}
    
    \begin{subfigure}[b]{\textwidth}
        \centering
        \begin{tabular}{cccc}
            \includegraphics[width=0.2\textwidth]{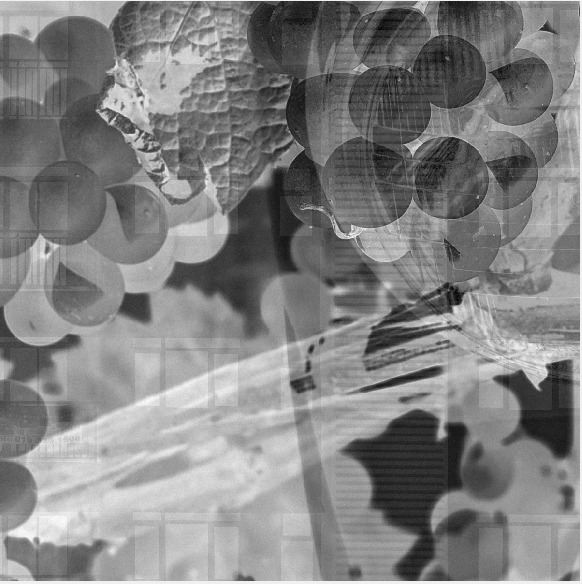} &
            \includegraphics[width=0.2\textwidth]{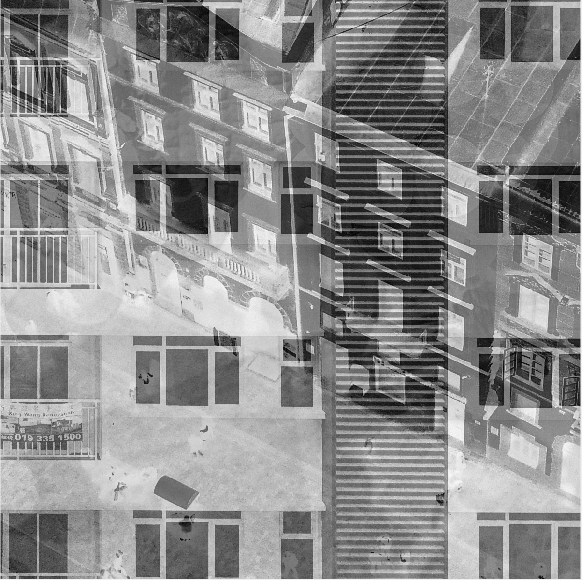} &
            \includegraphics[width=0.2\textwidth]{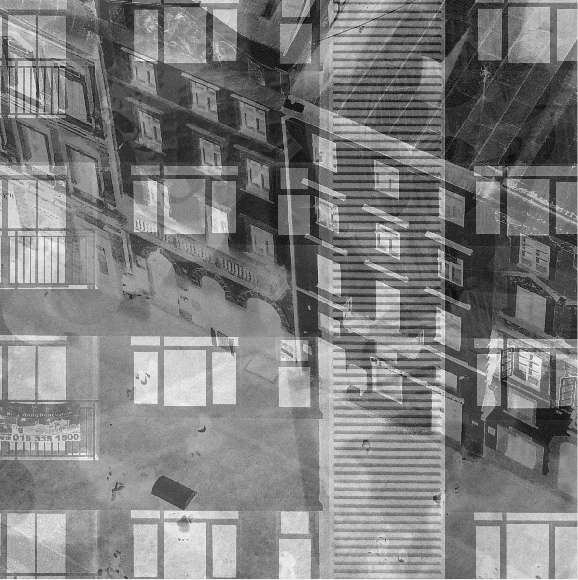} &
            \includegraphics[width=0.2\textwidth]{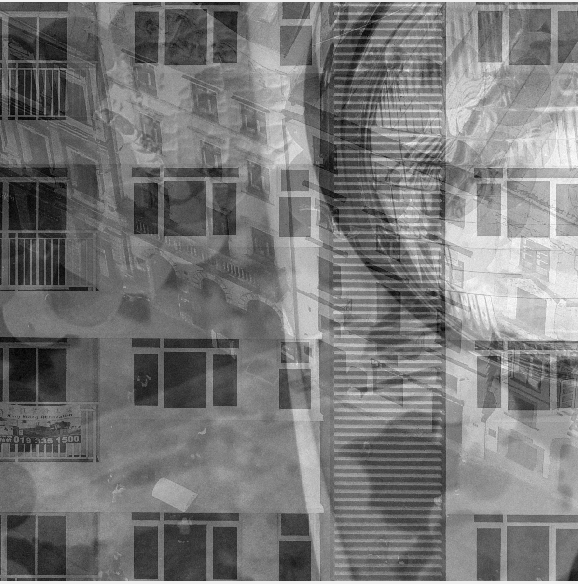} \\
        \end{tabular}
        \caption{Mixed Images}
        \vspace{5pt}
    \end{subfigure}
    
    \begin{subfigure}[b]{\textwidth}
        \centering
        \begin{tabular}{cccc}
            \includegraphics[width=0.2\textwidth]{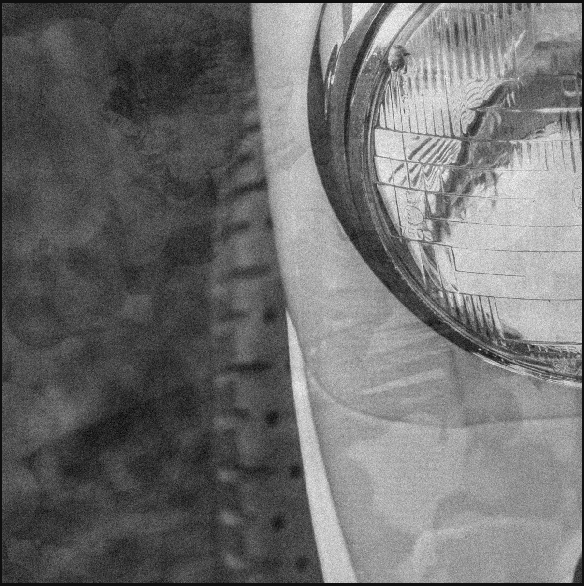} &
            \includegraphics[width=0.2\textwidth]{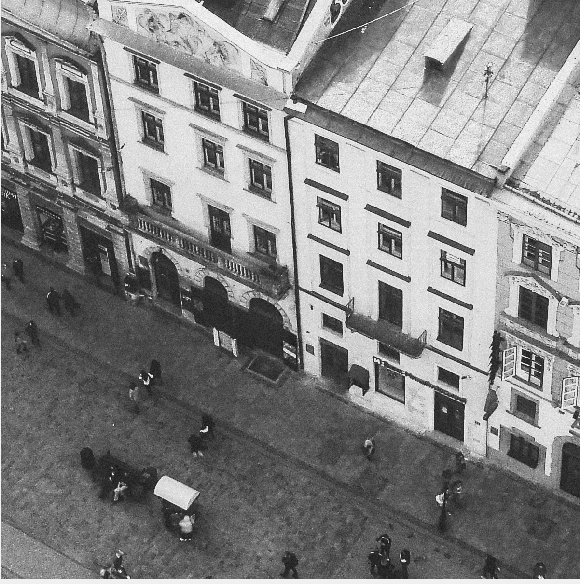} &
            \includegraphics[width=0.2\textwidth]{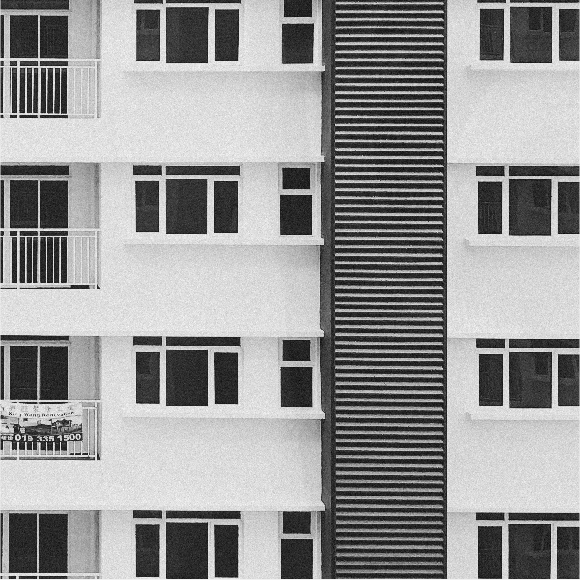} &
            \includegraphics[width=0.2\textwidth]{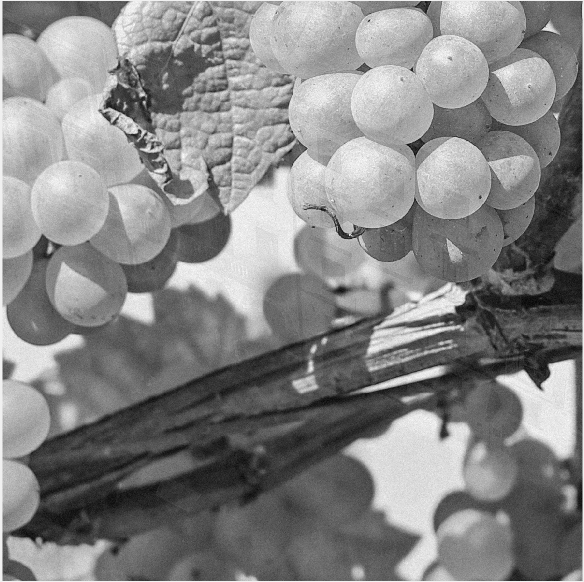} \\
        \end{tabular}
        \caption{Demixed Images using CHF-based contrast function ($\Delta(\bt,F|P) = 5.26 \times 10^{-3}$).}
        \vspace{5pt}
    \end{subfigure}
    
    \begin{subfigure}[b]{\textwidth}
        \centering
        \begin{tabular}{cccc}
            \includegraphics[width=0.2\textwidth]{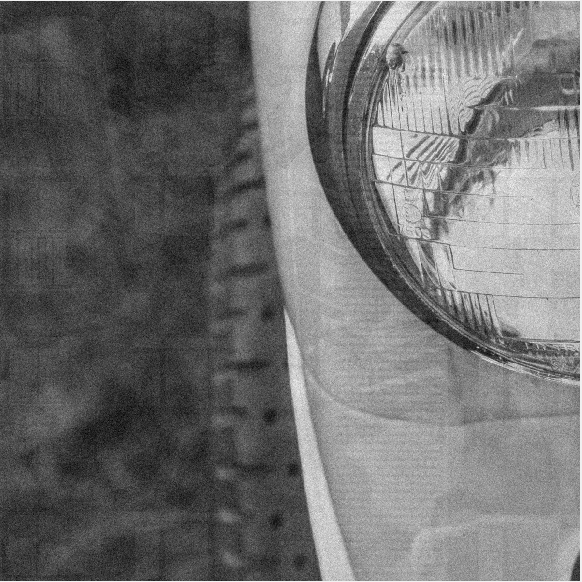} &
            \includegraphics[width=0.2\textwidth]{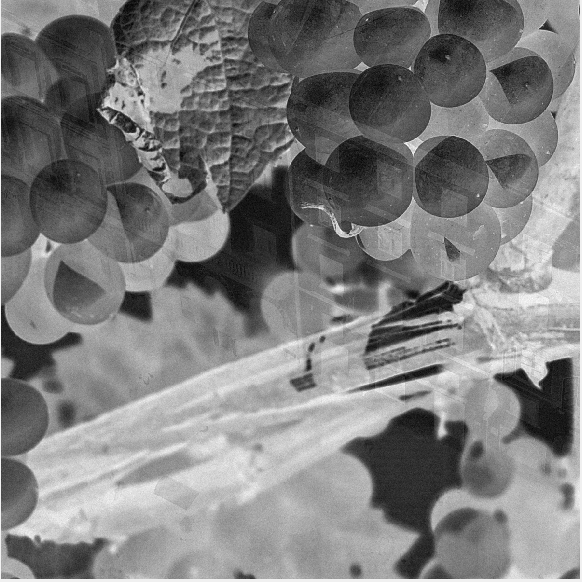} &
            \includegraphics[width=0.2\textwidth]{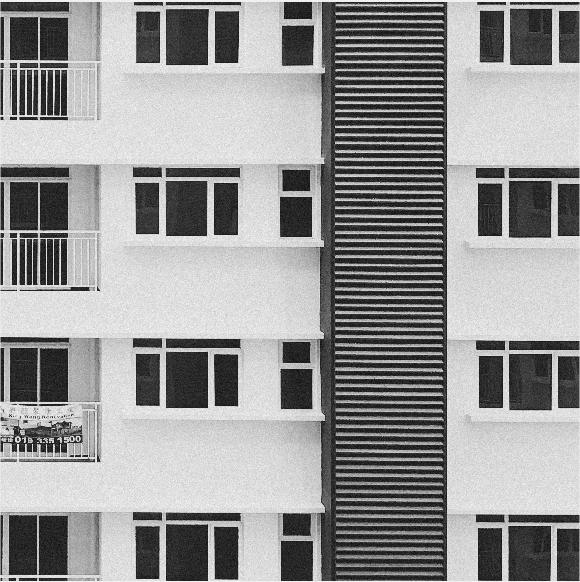} &
            \includegraphics[width=0.2\textwidth]{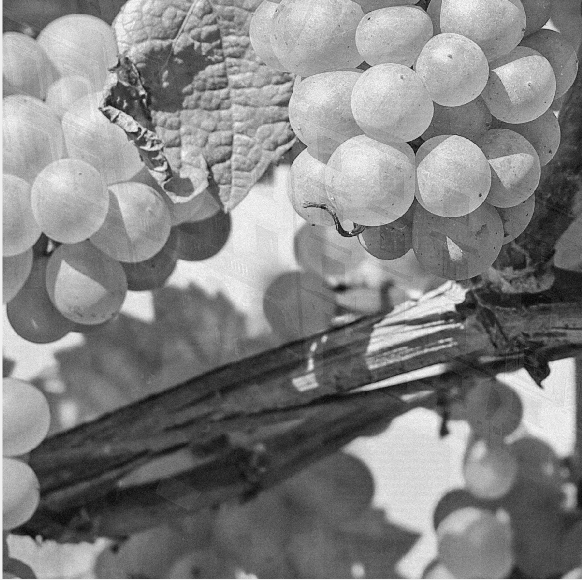} \\
        \end{tabular}
        \caption{Demixed Images using Kurtosis-based contrast function ($\Delta(\bt,F|P) = 2.48 \times 10^{-2}$)}
    \end{subfigure}
    \vspace{0.1pt}
    \caption{\label{figure:image_demixing_experiment} We demix images using ICA by flattening and linearly mixing them with a $4 \times 4$ matrix $B$ (i.i.d entries $\sim \mathcal{N}(0,1)$) and Wishart noise ($\rho = 0.001$). The CHF-based method (c) recovers the original sources well, upto sign. The Kurtosis-based method (d) fails to recover the second source. This is consistent with its higher independence score. The Meta algorithm selects CHF from candidates CHF, CGF, Kurtosis, FastICA, and JADE. Appendix Section~\ref{section:ica_additional_experiments} provides results for other contrast functions and their independence scores.}
\end{figure}

\section{Conclusion}
ICA is a classical problem that aims to extract independent non-Gaussian components. The vast literature on both noiseless and noisy ICA introduces many different inference methods based on a variety of contrast functions for separating the non-Gaussian signal from the Gaussian noise. Each has its own set of shortcomings. We aim to identify the best method for a given dataset in a data-driven fashion. In this paper, we propose a nonparametric score, which is used to evaluate the quality of the solution of any inference method for the noisy ICA model. Using this score, we design a Meta algorithm, which chooses among a set of candidate solutions of the demixing matrix. We also provide new contrast functions and a computationally efficient optimization framework. While they also have shortcomings, we show that our diagnostic can remedy them. We provide uniform convergence properties of our score and theoretical results for local and global convergence of our methods. Simulated and real-world experiments show that our Meta-algorithm matches the accuracy of the best candidate across various distributional settings.

\section*{Acknowledgments}
The authors thank Aiyou Chen for many important discussions that helped shape this paper and for sharing his code of PFICA. The authors also thank Joe Neeman for sharing his valuable insights and the anonymous reviewers for their helpful suggestions in improving the exposition of the paper. SK and PS were partially supported by NSF grants 2217069, 2019844, and DMS 2109155.




\newpage 
\bibliography{refs}
\bibliographystyle{plain}

\newpage 
\appendix

\section{Appendix}

\renewcommand{\thetheorem}{A.\arabic{theorem}}
\renewcommand{\thesection}{A.\arabic{section}}
\renewcommand{\thefigure}{A.\arabic{figure}}
\renewcommand{\theequation}{A.\arabic{equation}}
\newcounter{lemma}
\newcounter{proposition}

\setcounter{theorem}{0}
\setcounter{section}{0}
\setcounter{figure}{0}
\setcounter{lemma}{0}
\setcounter{proposition}{0}

The Appendix is organized as follows - 

\begin{itemize}
    \item Section \ref{section:proofs} proves Theorems \ref{theorem:independence_score_correctness}, \ref{theorem:uniform_convergence}, \ref{theorem:CGF_CHF_quasi_orth},   \ref{theorem:global_convergence} and \ref{theorem:local_convergence} 
    \item Section~\ref{section:ica_additional_experiments} provides additional experiments for noisy ICA
    \item Section \ref{section:sequential_algorithm} provides the algorithm to compute independence scores via a sequential procedure
    \item Section \ref{section:Assumption_1d} explores the third derivative constraint in Theorem~\ref{theorem:global_convergence} and provides examples where it holds
    \item Section \ref{section:surface_plots} contains surface plots of the loss landscape for noisy ICA using the CHF-based contrast functions
\end{itemize}

\section{Proofs}
\label{section:proofs}
\vspace{5pt}

\begin{proof}[Proof of Theorem \ref{theorem:independence_score_correctness}]

We will give an intuitive argument for the easy direction for $k=2$. 
We will show that if $F=DB^{-1}$ for some permutation of a diagonal matrix $D$, then after projecting using that matrix, the data is of the form $\bz+\bg'$ for some Gaussian vector $\bg'$.  Let $k=2$.
Let the entries of this vector be $z_1+g_1'$ and $z_2+g_2'$, where $z_i$, $i\in\{1,2\}$ are mean zero \textit{independent} non-Gaussian random variables and $g'_i$, $i\in\{1,2\}$ are mean zero \textit{possibly dependent} Gaussian variables. The Gaussian variables are independent of the non-Gaussian random variables. Let $t_i$, $i\in\{1,2\}$ be arbitrary but fixed real numbers.  Let $\bt=(t_1, t_2)$ and let $\Sigma_{g'}$ denote the covariance matrix of the Gaussian $g'$. Assume for simplicity $\var(z_i)=1$ for $i\in\{1,2\}$. Denote by $\Lambda:=\cov(F\bx)=FSF^T=I+\Sigma_{g'}$. The \textsc{Joint} part of the score is given by:
\bas{
\text{\textsc{Joint}}
&=\E\bbb{it_1z_1}\E\bbb{it_2z_2}\exp\bb{-\frac{\bt^T(\Sigma_{g'}+\diag(\Lambda))\bt}{2}}
}
The \textsc{Product} part of the score is given by
\bas{
\text{\textsc{Product}}
&=\E\bbb{it_1z_1}\E\bbb{it_2z_2}\exp\bb{-\frac{\bt^T(\diag(\Sigma_{g'})+\Lambda)\bt}{2}}
}
It is not hard to see that \textsc{Joint} equals \textsc{Product}. The same argument generalizes to $k>2$. We next provide proof for the harder direction here. Suppose that $\Delta_F(\bt|P) = 0$, i.e, 
\ba{
\E\bbb{\exp(i\bt^TF\bx)}\exp\bb{-\frac{\bt^T\diag\bb{F S F^T}\bt}{2}}-\prod_{j=1}^k\E\bbb{\exp(i t_j (F\bx)_j)}\exp\bb{-\frac{\bt^TFS F^T \bt}{2}} = 0 \label{eq:delta_score_set_zero}
}
We then prove that $F$ must be of the form $DB^{-1}$ for $D$ being a permutation of a diagonal matrix. Then, taking the logarithm, we have
\ba{
    \ln\bb{\E\bbb{\exp(i\bt^TF\bx)}} - \sum_{j=1}^{k}\ln\bb{\E\bbb{\exp(i t_j (F\bx)_j)}} &= \frac{1}{2}\bt^T\bb{\diag\bb{F S F^T} - FS F^T}\bt \notag \\ 
    &= \frac{1}{2}\bt^T\bb{\diag\bb{F \Sigma F^T} - F \Sigma F^T}\bt \notag
}
Under the ICA model we have, $\bx = B\bz + \bg$. Therefore, from Eq~\ref{eq:delta_score_set_zero}, using the definition of the characteristic function of a Gaussian random variable and noting that $\cov\bb{\bg} = \Sigma$, we have
\bas{
    \ln\bb{\E\bbb{\exp(i\bt^TF\bx)}} &= \ln\bb{\E\bbb{\exp(i\bt^TFB\bz)}} + \ln\bb{\E\bbb{\exp(i\bt^TF\bg)}} \\ 
    &= \ln\bb{\E\bbb{\exp(i\bt^TFB\bz)}} - \frac{1}{2}\bt^T F\Sigma F^T\bt
}
and similarly,
\bas{
    \sum_{j=1}^{k}\ln\bb{\E\bbb{\exp(i t_j (F\bx)_j)}} &= \sum_{j=1}^{k}\ln\bb{\E\bbb{\exp(i t_j (FB\bz)_j)}} - \frac{1}{2}\bt^T\diag\bb{F \Sigma F^T}\bt
}
Therefore, from Eq~\ref{eq:delta_score_set_zero}, 
\bas{
    g_{F}\bb{\bt} := \ln\bb{\E\bbb{\exp(i\bt^TFB\bz)}} - \sum_{j=1}^{k}\ln\bb{\E\bbb{\exp(i t_j (FB\bz)_j)}}
}
must be a quadratic function of $\bt$. 
It is important to note that the second term is an additive function w.r.t $t_1, t_2, \cdots t_k$. For simplicity, consider the case of $k=2$ and assume that the joint and marginal characteristic functions of all signals are twice differentiable. Consider $\frac{\partial^{2}\bb{g_{F}\bb{\bt}}}{\partial t_{1}\partial t_{2}}$, then
\begin{align}
\sum_{k=1}^{2}\frac{\partial^{2}\bb{g_{F}\bb{\bt}}}{\partial t_{1}\partial t_{2}} & \equiv const.\label{eq:2nd-derivative} 
\end{align}
To simplify the notation, let $\psi_{j}(t) := \E\bbb{e^{itz_{j}}}$, $f_{j}\equiv\log\psi_{j}$, $M := FB$. The above is then equivalent to
\begin{align}
\sum_{k=1}^{2}M_{1k}M_{2k}f_{k}''(M_{1k}t_{1}+M_{2k}t_{2}) & \equiv const. \label{eq:f''} 
\end{align}
Note that $M$ is invertible since $F,B$ are invertible. Therefore, let ${\bf s}\equiv M^{T}{\bf t}$, then
\begin{align*}
\sum_{k=1}^{2}M_{1k}M_{2k}f_{k}''(s_{k}) & \equiv const.
\end{align*}
which implies that either $M_{1k}M_{2k}=0$ or $f_{k}''(s_{k})\equiv const$,
for $k=1,2$. For any $k\in\{1,2\}$, since $z_{k}$ is non-Gaussian, its
logarithmic characteristic function, i.e. $f_{k}$ cannot be a quadratic
function, so 
\begin{align*}
M_{1k}M_{2k} &= 0
\end{align*}
which implies that each column of $M$ has a zero. Since $M$ is invertible, thus $M$ is either a diagonal matrix or a permutation of a diagonal matrix. Now to consider $k>2$, we just need to apply the above $k=2$ argument to pairwise entries of $\{t_{1},\cdots,t_{k}\}$ with other entries fixed. Hence proved.
\end{proof}


\begin{proof}[Proof of Theorem \ref{theorem:global_convergence}] We start with considering the following constrained optimization problem - 
\bas{
    \sup_{\bu^TC^{-1}\bu=1}f\bb{C^{-1}\bu|P}
}
By an application of lagrange multipliers, for the optima $\bu_{\text{opt}}$, we have
\bas{
    \nu C^{-1}\bu_{opt} = C^{-1}\nabla f\bb{(C^{-1})^T\bu_{\text{opt}}|P}, \ \ \ \bu_{\text{opt}}^TC^{-1}\bu_{\text{opt}} = 1
}
with multiplier $\nu \in \mathbb{R}$. This leads to a fixed-point iteration very similar to the one in \cite{NIPS2015_89f03f7d}, which is the motivation for considering such an optimization framework. We now introduce some notations to simplify the problem.
\\ \\ 
Let $\bu = B\balpha$, $z_i' := \frac{z_i}{d_i}$ and $a'_i := \var(z_i') = \frac{a_i}{d_i^2}$, where $a_i := \var(z_i)$. Then,
\begin{align*}
    \sup_{\bu^TC^{-1}\bu=1}f\bb{C^{-1}\bu|P} = 
\sup_{\balpha^TD^{-1}\balpha=1}f\bb{D^{-1}\balpha|\bz}
\end{align*}
We will use $f(\bu|P)$ or $f(\bu|\bx)$ where $\bx\sim P$ interchangeably. By Assumption~\ref{assump:third}-(a), we see that $f\bb{D^{-1}\balpha|\bz}=\sum_{i=1}^{k} f\bb{\frac{\alpha_i}{d_i}|z_i}$. For simplicity of notation, we will use $h_i(\alpha_i/d_i)=f\bb{\alpha_i/d_i|z_i}$, since the functional form can be different for different random variables $z_i$. 
Therefore, we seek to characterize the fixed points of the following optimization problem -
\begin{align}
    &\sup_{\balpha}\sum_{i=1}^{k} h_i(\alpha_i/d_i) \label{original_optimization_problem} \\
    &\;\;\; s.t \;\;\; \sum_{i=1}^{k}\frac{\alpha_{i}^{2}}{d_{i}} = 1, \notag \\
    &\;\;\;\;\;\;\;\;\;\;\; d_{i} \neq 0 \;\; \forall \; i \in [k] \notag
\end{align}
where $\balpha, \bm{d} \in \mathbb{R}^{k}$. We have essentially moved from optimizing in the $\langle .,.\rangle_{C^{-1}}$ space to the $\langle .,.\rangle_{D^{-1}}$ space. We find stationary points of the Lagrangian
\begin{align}
\mathcal{L}(\balpha, \lambda) := \sum_{i=1}^{k}h_i\bb{\frac{\alpha_i}{d_i}} - \lambda\left(\sum_{i=1}^{k}\frac{\alpha_{i}^{2}}{d_{i}} - 1\right) \label{eq:surface}
\end{align}
The components of the gradient of $\mathcal{L}(\balpha, \lambda)$ w.r.t $\balpha$ are given as 
\begin{align*}
    \frac{\partial}{\partial \alpha_j} \mathcal{L}(\balpha, \lambda) =\frac{1}{d_j}h_j'\left(\frac{\alpha_j}{d_j}\right)-\lambda \frac{\alpha_j}{d_j},
\end{align*}
At the fixed point $(\balpha, \lambda)$ we have, 
\begin{align*}
    \forall j \in [k], h_j'\left(\frac{\alpha_j}{d_j}\right)-\lambda\alpha_j = 0
\end{align*}
By Assumption~\ref{assump:third}-(b), for $\alpha_i=0$, the above is automatically zero. However, for $\{j:\alpha_j\neq 0\}$, we have, 
\begin{align}
    \lambda=
    \frac{h_j'\left(\frac{\alpha_j}{d_j}\right)}{\alpha_j} \label{lagrange_condition1}
\end{align}
So, for all $\{j:\alpha_j\neq 0\}$, we must have the same value and sign of $\frac{1}{\alpha_j}h_j'\left(\frac{\alpha_j}{d_j}\right)$. By assumption in the theorem statement, $h_i'''(x)$ does not change sign on the half-lines $x>0$ and $x<0$. Along with Assumption~\ref{assump:third}-(b), this implies that
\begin{align}
    & \; \forall \;x \in [0,\infty), \; \sgn(h_i(x))\;= \sgn(h_i'(x)) \; = \; \sgn(h_{i}''(x )) \; = \sgn(h_{i}'''(x)) = \kappa_{1} \label{sign_condition1} \\
    & \; \forall \;x \in (-\infty, 0],  \; \sgn(h_i(x))\;= -\sgn(h_{i}'(x)) \; = \; \sgn(h_{i}''(x)) \; = -\sgn(h_{i}'''(x)) = \kappa_{2} \label{sign_condition2}
\end{align}
where $\kappa_{1}, \kappa_{2} \in \{-1,1\}$ are constants. Furthermore, we note that Assumption~\ref{assump:third}-(d) ensures that $h_i(x)$ is a symmetric function. Therefore, $\forall \; x \in \mathbb{R}, \sgn(h_i(x)) = \sgn(h_{i}''(x)) = \kappa, \kappa \in \{-1,1\}$. Then, since $d_i = h_i''(u_i)$, 
\begin{align}
    \sgn\bb{d_i} = \sgn\bb{h_i(x)}, \forall x \in \mathbb{R} \label{eq:d_i_h_i_same_sign}
\end{align}
Now, using \ref{lagrange_condition1},
\begin{align}
    \sgn(\lambda)&=\sgn\left(h_j'\left(\frac{\alpha_j}{d_j}\right)\right)\times \sgn(\alpha_j) \notag \\
                 &= \sgn\left(\frac{\alpha_j}{d_j}\right) \times \sgn\left(h_j\left(\frac{\alpha_j}{d_j}\right)\right)\times \sgn(\alpha_j) ,\text{using Eq}~\ref{sign_condition1}\text{ and}~\ref{sign_condition2} \notag \\
                 &= \sgn\left(d_{j}\right) \times \sgn\left(h_j\left(\frac{\alpha_j}{d_j}\right)\right) \notag \\
                 &= 1 ,\text{using Eq}~\ref{eq:d_i_h_i_same_sign} \label{eq:sgn_lambda}
\end{align}
Keeping this in mind, we now compute the Hessian, $H \in \mathbb{R}^{k \times k}$, of the lagrangian, $\mathcal{L}(\balpha, \lambda)$ at the fixed point, $\bb{\balpha, \lambda}$. Recall that, we have $h'_i(0)=0$ and $h''_i(0)=0.$ Thus, for $\{i:\alpha_i=0\}$, 
\begin{align}\label{eq:vhv}
    H_{ii}=-\lambda/ d_i
\end{align}
This implies that $\sgn\bb{d_iH_{ii}} = \sgn\bb{\lambda} = -1$ for $\{i:\alpha_i=0\}$, using Eq~\ref{eq:sgn_lambda}.
\\ \\
For $\{i:\alpha_i \neq 0\}$, we have
\begin{align*}
     H_{ij} = \frac{\partial^2}{\partial \alpha_i\partial \alpha_j}  \mathcal{L}(\balpha, \lambda)\bigg|_{\balpha} &= \mathbbm{1}(i=j)\left[\frac{h_i''\left(\frac{\alpha_i}{d_i}\right)}{d_i^2}- \frac{\lambda}{d_i}\right]\\
     &=\mathbbm{1}(i=j)\bbb{\frac{h_i''\left(\frac{\alpha_i}{d_i}\right)}{d_i^2}-\frac{h_i'\left(\frac{\alpha_i}{d_i}\right)}{\alpha_id_i}}, \qquad \text{for $\alpha_i\neq 0$, using } \ref{lagrange_condition1}\\
     &=\mathbbm{1}(i=j)\frac{1}{d_i\alpha_i}\left[\frac{\alpha_i}{d_i}h_i''\left(\frac{\alpha_i}{d_i}\right)-h_i'\left(\frac{\alpha_i}{d_i}\right)\right], \qquad \text{for $\alpha_i\neq 0$}
\end{align*}
\noindent
We consider the pseudo inner product space $\langle .,.\rangle_{D^{-1}}$ for optimizing $\balpha$. Furthermore, since we are in this pseudo-inner product space, we have
\begin{align*}
    \langle\bv,H\bv\rangle_{D^{-1}} = \bv D^{-1}H\bv
\end{align*}
So we will consider the positive definite-ness of the matrix $\tilde{H}:=D^{-1}H$ to characterize the fixed points. Recall that for a differentiable convex function $f$, we have $\forall \; x, y \in dom(f) \subseteq \mathbb{R}^{n}$
\begin{align*}
    f(y) \geq f(x) + \nabla f(x)^{T}\left(y - x\right)
\end{align*}
Therefore, for $\{i:\alpha_i \neq 0\}$, we have
\begin{align}\label{eq:dihii}
    \sgn\left(d_{i}H_{ii}\right) &= \sgn\left(d_{i}\right) \times \sgn\left(d_{i}\alpha_i\right) \times \sgn\left(\frac{\alpha_i}{d_i}h_i''\left(\frac{\alpha_i}{d_i}\right)-h_i'\left(\frac{\alpha_i}{d_i}\right)\right) \notag \\
    &= \sgn\left(\alpha_i\right) \times \sgn\left(h_i'''\left(\frac{\alpha_i}{d_i}\right)\right) ,\text{using convexity/concavity of } h'(.) \notag \\
    &= \sgn\left(\alpha_i\right) \times \sgn\left(h_i'\left(\frac{\alpha_i}{d_i}\right)\right) ,\text{using } \ref{sign_condition1} \text{ and } \ref{sign_condition2} \notag \\
    &= \sgn\left(\alpha_i\right) \times \sgn\left(\lambda\right) \times \sgn\left(\alpha_i\right) ,\text{using } \ref{lagrange_condition1} \notag \\ 
    &= \sgn\left(\lambda\right) \notag \\
    &= 1 ,\text{using Eq}~\ref{eq:d_i_h_i_same_sign}
\end{align}
Let $S := \{i : \alpha_i \neq 0\}$. We then have the following cases - 
\begin{enumerate}
    \item Assume that only one $\alpha_i$ is nonzero, then $\forall \bv$ orthogonal to this direction, $\langle\bv,H\bv\rangle_{D^{-1}}<0$ by Eq~\ref{eq:vhv}. Thus this gives a local maxima.
    \item Assume more than one $\alpha_i$ are nonzero, but $|S| < k$. Then we have for $i\not\in S$, $\tilde{H}_{jj}<0$ using \ref{eq:vhv}. For $i\in S$, $\tilde{H}_{ii}>0$  from Eq~\ref{eq:dihii}. Hence these are saddle points.
    \item Assume we have $S=[k]$, i.e. $\forall i$, $\alpha_i\neq 0$. In this case, $\tilde{H}\succ 0$. So, we have a local minima. 
\end{enumerate}
This completes our proof.
\end{proof}


\begin{theorem}
   \label{theorem:CGF_CHF_quasi_orth}
   Consider the data generated from the noisy ICA model (Eq~\ref{eq:ICA_noise_model}). If $f(\bu|P)$ is defined as in Eq~\ref{eq:logchf} or Eq~\ref{eq:cgf}, we have $\nabla^2 f(\bu|P)=BD_uB^T$, for some diagonal matrix $D_{u}$, which can be different for the differerent contrast functions.
\end{theorem}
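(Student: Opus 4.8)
The plan is to exploit the additive structure that both contrast functions acquire on the noisy ICA model: once the Gaussian-noise contribution is cancelled by the explicit $S$-dependent correction term, each function collapses to a \emph{sum of univariate functions}, every summand depending on $\bu$ only through the scalar $v_{j}:=(B^{T}\bu)_{j}=B(:,j)^{T}\bu$. The Hessian of any map of the form $\bu\mapsto\sum_{j}g_{j}\big(B(:,j)^{T}\bu\big)$ is automatically $\sum_{j}g_{j}''\big(B(:,j)^{T}\bu\big)\,B(:,j)B(:,j)^{T}=BD_{\bu}B^{T}$, since $\sum_{j}d_{j}B(:,j)B(:,j)^{T}=B\diag(d_{1},\dots,d_{k})B^{T}$. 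This is exactly the claimed form, so the whole argument reduces to showing that $f(\bu|P)$ separates in this way.

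For the CHF function I would first factorize the characteristic function using $\bx=B\bz+\bg$, the independence of $\bz$ and $\bg$, and the mutual independence of the coordinates of $\bz$:
\[
\E\exp(i\bu^{T}\bx)=\prod_{j=1}^{k}\psi_{j}(v_{j})\,\exp\!\Big(-\tfrac12\bu^{T}\Sigma\bu\Big),
\]
where $\psi_{j}$ is the characteristic function of $z_{j}$. The conjugate term $\log\E\exp(-i\bu^{T}\bx)$ factorizes identically with $\psi_{j}(-v_{j})=\overline{\psi_{j}(v_{j})}$; adding the two logarithms produces the real quantity $\log|\E\exp(i\bu^{T}\bx)|^{2}$, so no branch of the complex logarithm needs to be chosen. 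Expanding $\bu^{T}S\bu=\sum_{j}a_{j}v_{j}^{2}+\bu^{T}\Sigma\bu$ via $S=B\diag(\bvar)B^{T}+\Sigma$ (with $a_{j}=\var(z_{j})$), the two copies of $-\tfrac12\bu^{T}\Sigma\bu$ and the single $+\bu^{T}\Sigma\bu$ cancel exactly, leaving $f(\bu|P)=\sum_{j}g_{j}(v_{j})$ with $g_{j}(v)=\log\psi_{j}(v)+\log\psi_{j}(-v)+a_{j}v^{2}$.

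For the CGF function the steps are identical with the moment generating function $m_{j}$ in place of $\psi_{j}$: $\E\exp(\bu^{T}\bx)=\prod_{j}m_{j}(v_{j})\exp(\tfrac12\bu^{T}\Sigma\bu)$, hence $\log\E\exp(\bu^{T}\bx)=\sum_{j}K_{j}(v_{j})+\tfrac12\bu^{T}\Sigma\bu$ with $K_{j}=\log m_{j}$ the cumulant generating function of $z_{j}$. Subtracting $\tfrac12\bu^{T}S\bu=\tfrac12\sum_{j}a_{j}v_{j}^{2}+\tfrac12\bu^{T}\Sigma\bu$ again cancels the $\Sigma$-quadratic and gives $f(\bu|P)=\sum_{j}\big(K_{j}(v_{j})-\tfrac12 a_{j}v_{j}^{2}\big)$, once more a sum of univariate functions of $v_{j}=B(:,j)^{T}\bu$. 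Differentiating either representation twice in $\bu$ then yields $\nabla^{2}f(\bu|P)=BD_{\bu}B^{T}$, with $D_{\bu}=\diag\big(g_{j}''(v_{j})\big)$ in the CHF case and $D_{\bu}=\diag\big(K_{j}''(v_{j})-a_{j}\big)$ in the CGF case.

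I expect the only delicate points to be analytic rather than algebraic: justifying differentiation under the expectation (so that the Hessian of $\log\E[\cdot]$ may be computed termwise) and ensuring the logarithms are well defined. For CHF this means working in a neighbourhood of $\bu$ where $\psi_{j}(v_{j})\neq0$; symmetrizing via Assumption~\ref{assump:third}(d) makes each $\psi_{j}$ real with $\psi_{j}(0)=1$, so the $g_{j}$ are smooth near the origin. For CGF one needs $m_{j}$ to exist in a neighbourhood of the relevant arguments, which is precisely the regularity already presumed when this contrast is used (and explains why it is flagged as unsuitable for heavy-tailed sources). Granting these standard conditions, the factorization of the Hessian is immediate.
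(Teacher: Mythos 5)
Your proof is correct, and it reaches the conclusion by a slightly different route than the paper. The paper does not first rewrite $f(\bu|P)$ as a separable sum; it keeps the un-factorized quantity $\log\E\bbb{\exp(i\bu^TB\bz)}$ (resp.\ $\log\E\bbb{\exp(\bu^TB\bz)}$), differentiates twice to obtain a matrix of the form $B\bb{-H(\bu;\bz)-H(\bz;-\bu)+2\diag(\bvar)}B^T$, where $H$ is the covariance of $\bz$ under an exponential/complex-exponential tilt, and then verifies diagonality of $H$ entrywise by factorizing the numerator and denominator of each off-diagonal entry over the independent coordinates of $\bz$. You instead push the independence-based factorization through the logarithm \emph{before} differentiating, so that $f(\bu|P)=\sum_j g_j\bb{B(:,j)^T\bu}$ and the Hessian formula $\nabla^2 f=\sum_j g_j''\bb{v_j}B(:,j)B(:,j)^T=BD_\bu B^T$ is immediate; you also make explicit the cancellation of the $\bu^T\Sigma\bu$ terms against the correction $\bu^TS\bu$, which the paper performs implicitly when it writes $f$ directly in terms of $B\bz$ and $\diag(\bvar)$. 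The two arguments rest on the same fact (independence of the $z_j$ factorizes the characteristic/moment generating function), but yours is more transparent and matches the separable decomposition $f(\bu|P)=\sum_i r_i(\bu)$ that the paper itself uses later in the local-convergence proof; the paper's tilted-covariance computation, on the other hand, yields an interpretable form for $D_\bu$ (a tilted variance minus $\var(z_i)$) without requiring a choice of branch for the complex logarithm, which is the one analytic point you rightly flag as needing care in the CHF case.
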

\begin{remark}
    The above theorem is useful because it shows that the Hessian of the contrast functions based on the CHF (eq~\ref{eq:logchf} or CGF (eq~\ref{eq:cgf}) is of the form $BDB^T$, where $D$ is some diagonal matrix. This matrix can be precomputed at some vector $\bu$ and used as the matrix $C$ in the power iteration update (see eq~\ref{eq:power}) in Algorithm~2 of~\cite{NIPS2015_89f03f7d}. 
\end{remark}

\begin{proof}[Proof of Theorem \ref{theorem:CGF_CHF_quasi_orth}]
First, we show that the CHF-based contrast function (see Eq~\ref{eq:logchf}), the Hessian, is of the correct form.

\textbf{CHF-based contrast function}

\begin{align*}
    f(\bu|P) = \log \E\exp(i\bu^TB\bz)+\log \E\exp(-i\bu^TB\bz)+\bu^TB\diag(\bvar)B^T\bu,
\end{align*}
where $\diag(\bvar)$ is the covariance matrix of $\bz$. For simplicity of notation, define $\phi(\bz;\bu):=\E\bbb{\exp(i\bu^TB\bz)}$.
\begin{align*}
    \nabla f(\bu|P)=iB\bb{\underbrace{\frac{\E\bbb{\exp(i\bu^TB\bz)\bz}}{\phi(\bz;\bu)}}_{\mu(\bu;\bz)}-i\frac{\E\bbb{\exp(-i\bu^T\bz)\bz}}{\phi(\bz;-\bu)}}+2B\diag(\bvar)B^T\bu
\end{align*}
Define
$$H(\bu;\bz):=\frac{\E\bbb{\exp(i\bu^TB\bz)\bz\bz^T}}{\phi(\bz;\bu)}-\mu(\bu;\bz)\mu(\bu;\bz)^T.$$
Taking a second derivative, we have:
\begin{align*}
    \nabla^2 f(\bu|P)=B\bb{-H(\bu;\bz)-H(\bz;-\bu)+2\diag(\bvar)}B^T
\end{align*} 
All we have to show at this point is that $H(\bu;\bz)$ is diagonal. We will evaluate the $k,\ell$ entry. Let $B_i$ denote the $i^{th}$ column of $B$. Let $Y_{\setminus k,\ell}:=\bu^TB\sum_{j\neq k,\ell}z_j$. Using independence of the components of $\bz$, we have, for $k\neq \ell$,
\ba{
    \label{eq:Huz}
    \frac{\E\bbb{\exp(i\bu^TB_kz_k+i\bu^TB_{\ell}z_{\ell}+Y_{\setminus k\ell})z_kz_{\ell}}}{\phi(\bz;\bu)}=\frac{\E\bbb{\exp(i\bu^TB_kz_k)z_k}\E\bbb{\exp(i\bu^TB_{\ell}z_{\ell})z_{\ell}}}{\E\bbb{\exp(i\bu^TB_kz_k)}\E\bbb{\exp(i\bu^TB_\ell z_\ell)}}
}
Now we evaluate:
\ba{
    \label{eq:muuz}
    \be_k^T\mu(\bu;\bz):=\frac{\E\bbb{\exp(i\bu^TB\bz)z_k}}{\phi(\bz;\bu)}=\frac{\E\bbb{\exp(i\bu^TB_k z_k)z_k}}{\E\bbb{\exp(i\bu^TB_k z_k)}}
}
Using Eqs~\ref{eq:Huz} and~\ref{eq:muuz}, we see that indeed $H(\bu;\bz)$ is diagonal.

Now, we prove the statement about the CGF-based contrast function. 

\textbf{CGF-based contrast function}

We have, 
\bas{
    \nabla f\bb{\bu|P} &= \frac{\mathbb{E}\bbb{\exp\bb{\bu^{T}B\bz}B\bz}}{\mathbb{E}\bbb{\exp\bb{\bu^{T}B\bz}}} - B\diag\bb{\bvar}B^{T}\bu, \\ 
    \nabla^{2} f\bb{\bu|P} &= \frac{\mathbb{E}\bbb{\exp\bb{\bu^{T}B\bz}B\bz \bz^{T}B^{T}}}{\mathbb{E}\bbb{\exp\bb{\bu^{T}B\bz}}} - \frac{\mathbb{E}\bbb{\exp\bb{\bu^{T}B\bz}B\bz}\mathbb{E}\bbb{\exp\bb{\bu^{T}B\bz}\bz^{T}B^{T}}}{\mathbb{E}\bbb{\exp\bb{\bu^{T}B\bz}}^{2}} - B\diag\bb{\bvar}B^{T} \\ 
    &= B\bbb{\underbrace{\frac{\mathbb{E}\bbb{\exp\bb{\bu^{T}B\bz}\bz\bz^{T}}}{\mathbb{E}\bbb{\exp\bb{\bu^{T}B\bz}}} - \frac{\mathbb{E}\bbb{\exp\bb{\bu^{T}B\bz}\bz}\mathbb{E}\bbb{\exp\bb{\bu^{T}B\bz}\bz^{T}}}{\mathbb{E}\bbb{\exp\bb{\bu^{T}B\bz}}^{2}}}_{\text{Covariance of } \bz \; \sim \; \mathbb{P}\bb{\bz}.\dfrac{\exp\bb{\bu^{T}B\bz}}{\mathbb{E}\bbb{\exp\bb{\bu^{T}B\bz}}}} - \diag\bb{\bvar}}B^{T}
}
The new probability density $\mathbb{P}\bb{\bz}.\frac{\exp\bb{\bu^{T}B\bz}}{\mathbb{E}\bbb{\exp\bb{\bu^{T}B\bz}}}$ is an exponential tilt of the original pdf, and since $\{z_{i}\}_{i=1}^{d}$ are independent, and the new tilted density also factorizes over the $z_{i}$'s, therefore, the covariance under this tilted density is also diagonal.
\end{proof}

\subsection{Uniform convergence}
\label{subsection:ica_uniform_convergence}
In this section, we provide the proof for Theorem \ref{theorem:uniform_convergence}. First, we state some preliminary results about the uniform convergence of smooth function classes which will be useful for our proofs.

\subsubsection{Preliminaries}
Let $D := \sup_{\theta, \tilde{\theta} \in \mathbb{T}}\rho_{X}\bb{\theta, \tilde{\theta}}$ denote the diameter of set $\mathbb{T}$, and let $\mathcal{N}_{X}\bb{\delta; \mathbb{T}}$ denote the $\delta$-covering number of $\mathbb{T}$ in the $\rho_{X}$ metric. Then, we have the following standard result:
\begin{proposition} \label{proposition:one_step_discretization} (Proposition 5.17 from~\cite{wainwright2019high}) Let $\left\{X_{\theta}, \theta \in \mathbb{T}\right\}$ be a zero-mean subgaussian process with respect to the metric $\rho_{X}$. Then for any $\delta \in \bbb{0,D}$ such that $\mathcal{N}_{X}\bb{\delta; \mathbb{T}} \geq 10$, we have
\bas{
    \E\bbb{\sup_{\theta, \tilde{\theta} \in \mathbb{T}}\bb{X_{\theta}-X_{\tilde{\theta}}}} \leq 2\E\bbb{\sup_{\gamma, \gamma' \in \mathbb{T}; \rho_{X}\bb{\gamma,\gamma'} \leq \delta}\bb{X_{\theta}-X_{\tilde{\theta}}}} + 4\sqrt{D^{2}\log\bb{\mathcal{N}_{X}\bb{\delta; \mathbb{T}}}}
}
\end{proposition}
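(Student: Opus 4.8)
The plan is to prove this by a single-scale (``one-step'') discretization: approximate the index set $\mathbb{T}$ by a finite $\delta$-net, control the increments of the process \emph{within} a net cell by the first term on the right-hand side, and control the increments \emph{between} net points by a maximal inequality for finitely many subgaussian variables. Recall that the subgaussian-process hypothesis means exactly that for every $\theta,\tilde\theta\in\mathbb{T}$ and every $\lambda\in\mathbb{R}$, $\E\bbb{\exp\bb{\lambda\bb{X_{\theta}-X_{\tilde\theta}}}}\leq \exp\bb{\lambda^{2}\rho_{X}^{2}\bb{\theta,\tilde\theta}/2}$, so each increment $X_{\theta}-X_{\tilde\theta}$ is a centered subgaussian variable with parameter $\rho_{X}\bb{\theta,\tilde\theta}$. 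This is the only structural property I would use.

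First I would fix a minimal $\delta$-cover $\{\theta^{1},\dots,\theta^{N}\}$ of $\mathbb{T}$ with $N=\mathcal{N}_{X}\bb{\delta;\mathbb{T}}$, and for each $\theta\in\mathbb{T}$ let $\pi\bb{\theta}$ be a nearest net point, so that $\rho_{X}\bb{\theta,\pi\bb{\theta}}\leq\delta$. The key step is the telescoping decomposition
\bas{
X_{\theta}-X_{\tilde\theta}=\bb{X_{\theta}-X_{\pi\bb{\theta}}}+\bb{X_{\pi\bb{\theta}}-X_{\pi\bb{\tilde\theta}}}+\bb{X_{\pi\bb{\tilde\theta}}-X_{\tilde\theta}}.
}
The first and third brackets are increments between points at $\rho_{X}$-distance at most $\delta$, so each is dominated by $\sup_{\gamma,\gamma'\in\mathbb{T}:\rho_{X}\bb{\gamma,\gamma'}\leq\delta}\bb{X_{\gamma}-X_{\gamma'}}$; taking the supremum over $\theta,\tilde\theta$ and then expectations produces the factor-$2$ term. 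The middle bracket is an increment between two net points, hence bounded by $\max_{i,j\in[N]}\bb{X_{\theta^{i}}-X_{\theta^{j}}}$.

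It then remains to bound $\E\bbb{\max_{i,j}\bb{X_{\theta^{i}}-X_{\theta^{j}}}}$. There are at most $N^{2}$ ordered pairs, and each difference $X_{\theta^{i}}-X_{\theta^{j}}$ is centered subgaussian with parameter $\rho_{X}\bb{\theta^{i},\theta^{j}}\leq D$. Invoking the standard maximal inequality for a finite family of $M$ centered subgaussian variables of common parameter $\sigma$, namely $\E\bbb{\max_{m\in[M]}Y_{m}}\leq\sigma\sqrt{2\log M}$, with $\sigma\leq D$ and $M\leq N^{2}$, yields a bound of order $D\sqrt{\log N}$, which gives the claimed $4\sqrt{D^{2}\log\mathcal{N}_{X}\bb{\delta;\mathbb{T}}}$ term. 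The only genuinely delicate point is the numerical constant: one counts the ordered pairs as $N^{2}$ so that $\sqrt{2\log N^{2}}=2\sqrt{\log N}$, and the hypothesis $\mathcal{N}_{X}\bb{\delta;\mathbb{T}}\geq 10$ is precisely what lets one absorb the leftover additive slack into the stated constant; everything else is routine triangle-inequality and expectation bookkeeping.
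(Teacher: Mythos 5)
Your argument is correct: the telescoping through a minimal $\delta$-net, the factor-$2$ absorption of the two within-cell increments, and the subgaussian maximal inequality over the at most $N^{2}$ ordered pairs of net points (giving $D\sqrt{2\log N^{2}}=2D\sqrt{\log N}\leq 4\sqrt{D^{2}\log N}$) is exactly the standard one-step discretization proof of this result. Note, however, that the paper does not prove this proposition at all --- it is imported verbatim as Proposition 5.17 of the cited reference --- so your write-up is in effect reproducing the textbook proof rather than competing with an argument in the paper.
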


\begin{remark}
    \label{remark:subgaussian_suprema}
    For zero-mean subgaussian processes, Proposition~\ref{proposition:one_step_discretization} implies, $\E\bbb{\sup_{\theta \in \mathbb{T}}X_{\theta}} \leq \E\bbb{\sup_{\theta, \tilde{\theta} \in \mathbb{T}}\bb{X_{\theta}-X_{\tilde{\theta}}}}$
\end{remark}

\begin{proposition} \label{proposition:rademacher_complexity_bound} (Theorem 4.10 from \cite{wainwright2019high}) For any $b$-uniformly bounded class of functions $\mathcal{C}$, any positive integer $n \geq 1$, any scalar $\delta \geq 0$, and a set of i.i.d datapoints $\left\{X^{\bb{i}}\right\}_{i \in [n]}$ we have
\bas{
    \sup_{f \in \mathcal{C}}\left|\frac{1}{n}\sum_{i=1}^{n}f\bb{X^{\bb{i}}} - \E\bbb{f\bb{X}}\right|  \leq 2\frac{1}{n}\E_{X,\epsilon}\bbb{\sup_{f \in \mathcal{C}}\left|\sum_{i=1}^{n}\epsilon_{i}f\bb{X^{\bb{i}}}\right|} + \delta 
}
with probability at least $1 - \exp\bb{-\frac{n\delta^{2}}{2b^{2}}}$. Here $\left\{\epsilon_{i}\right\}_{i \in [n]}$ are i.i.d rademacher random variables.
\end{proposition}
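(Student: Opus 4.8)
The plan is to combine the bounded-differences (McDiarmid) inequality with the classical symmetrization device. Writing $Z := \sup_{f \in \mathcal{C}}\babs{\frac{1}{n}\sum_{i=1}^{n}f\bb{X^{\bb{i}}} - \E\bbb{f\bb{X}}}$, I would prove the two halves separately: first that $Z$ concentrates above its mean $\E\bbb{Z}$ with the stated probability, and second that $\E\bbb{Z}$ is controlled by the Rademacher quantity on the right-hand side.

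For the concentration half, I would view $Z$ as a function of the $n$ independent arguments $X^{\bb{1}},\dots,X^{\bb{n}}$ and verify its bounded-differences constant. Replacing a single $X^{\bb{i}}$ by an independent copy perturbs the empirical mean $\frac{1}{n}\sum_{j}f\bb{X^{\bb{j}}}$ by at most $\frac{2b}{n}$ uniformly in $f$, since $\babs{f}\leq b$; because the map $\bb{a_f}_f \mapsto \sup_f \babs{a_f}$ is $1$-Lipschitz in the sup-norm, $Z$ itself changes by at most $\frac{2b}{n}$. McDiarmid's inequality then gives $\mathbb{P}\bb{Z \geq \E\bbb{Z} + \delta} \leq \exp\bb{-\frac{n\delta^{2}}{2b^{2}}}$, which is precisely the failure probability in the statement.

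For the expectation half, I would introduce an independent ghost sample $\bb{\tilde{X}^{\bb{i}}}_{i\in[n]}$ identically distributed to the data and write $\E\bbb{f\bb{X}} = \E_{\tilde{X}}\bbb{\frac{1}{n}\sum_i f\bb{\tilde{X}^{\bb{i}}}}$. Moving the ghost expectation outside the supremum and the absolute value via Jensen's inequality (both being convex operations) yields $\E\bbb{Z} \leq \E_{X,\tilde{X}}\bbb{\sup_f \babs{\frac{1}{n}\sum_i \bb{f\bb{X^{\bb{i}}} - f\bb{\tilde{X}^{\bb{i}}}}}}$. Each difference $f\bb{X^{\bb{i}}} - f\bb{\tilde{X}^{\bb{i}}}$ is symmetric and the index pairs are exchangeable, so multiplying the $i$-th term by an independent Rademacher sign $\epsilon_i$ leaves the joint law unchanged; a triangle-inequality split into the $X$-part and the $\tilde{X}$-part, which are identically distributed, then gives $\E\bbb{Z} \leq \frac{2}{n}\E_{X,\epsilon}\bbb{\sup_f \babs{\sum_i \epsilon_i f\bb{X^{\bb{i}}}}}$.

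The main obstacle is the symmetrization step: one must carefully justify the Jensen interchange of the ghost expectation with $\sup_f \babs{\cdot}$ and confirm that inserting the Rademacher signs preserves the joint distribution (the exchangeability of the symmetric differences). The concentration step is routine once the constant $\frac{2b}{n}$ is pinned down. Combining the two displays — $Z \leq \E\bbb{Z} + \delta$ on the high-probability event together with the symmetrization bound on $\E\bbb{Z}$ — yields the claimed inequality.
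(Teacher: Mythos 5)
Your argument is correct and is precisely the standard proof of this result: McDiarmid's inequality with bounded-differences constant $2b/n$ gives the $\exp\bb{-n\delta^{2}/(2b^{2})}$ tail, and ghost-sample symmetrization with Jensen's inequality bounds the mean by twice the Rademacher complexity. The paper does not prove this proposition itself but simply cites Theorem 4.10 of Wainwright (2019), whose proof follows exactly the route you describe, so there is nothing to add.
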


\subsubsection{Proof of theorem~\ref{theorem:uniform_convergence}}
For dataset $\left\{\bx^{\bb{i}}\right\}_{i \in [n]}, \bx^{\bb{i}} \in \mathbb{R}^{k}$, consider the following definitions:
\ba{
    &\phi(\bt,F|P) := \E_{\bx}\bbb{\exp(i \bt^T F\bx)}, \;\;\;\; \phi(\bt,F|\hat{P}) := \frac{1}{n}\sum_{j=1}^{n}\exp(i\bt^TF\bx^{\bb{j}}) \\
    &\psi(\bt,F|P) := \prod_{j=1}^{k}\E_{\bx}\bbb{\exp(i t_j (F\bx)_j)}, \;\;\;\; \psi(\bt,F|\hat{P}) := \frac{1}{n}\prod_{j=1}^k\sum_{r=1}^{n}\exp(i t_j (F\bx^{\bb{r}})_j)
}
Let $\Delta(\bt, F|\hat{P})$ be the empirical version where the expectation is replaced by sample averages.
Now define:
\bas{
\Delta(\bt,F|P)&=\babs{\phi(\bt,F|P)\exp\bb{-\bt^T\diag(FSF^T)\bt}-\psi(\bt,F|P)\exp\bb{-\bt^TFSF^T\bt}}\\
\Delta(\bt,F|\hat{P})&=\babs{\phi(\bt,F|\hat{P})\exp\bb{-\bt^T\diag(F\widehat{S}F^T)\bt}-\psi(\bt,F|\hat{P})\exp\bb{-\bt^TF\widehat{S}F^T\bt}}
}
\begin{theorem}
    Let $\mathcal{F}:=\{F\in \mathbb{R}^{k\times k}:\|F\|\leq 1\}$. Assume that $\bx\sim\text{subgaussian}(\sigma)$. 
    We have:
    \bas{
    \sup_{F\in\mathcal{F}}|\E_{\bt\in N(0,I_k)}\Delta(\bt,F|P)-\E_{\bt\in N(0,I_k)}\Delta(\bt,F|\hat{P})|=O_P\bb{\sqrt{\frac{k^2\|S\|\max(k,\sigma^{4}\|S\|)\log^2 (n C_k)}{n}}}
    }
    where $C_{k} := \max(1,
k\log\bb{n}\Tr(S))$.
\end{theorem}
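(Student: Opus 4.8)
The plan is to bound the integrand $|\Delta(\bt,F|P)-\Delta(\bt,F|\hat P)|$ uniformly over $F\in\F$ and then exploit the averaging over $\bt\sim N(0,I_k)$ to turn the characteristic-function errors into empirical processes over \emph{bounded} function classes. First I would use $\babs{|a|-|b|}\le|a-b|$ followed by the triangle inequality to reduce the score difference to $|\textsc{Joint}_P-\textsc{Joint}_{\hat P}|+|\textsc{Product}_P-\textsc{Product}_{\hat P}|$. In each part I would insert a cross term to separate two error sources: writing, for the joint part, $\phi(\bt,F|P)e^{-\bt^T\diag(FSF^T)\bt}-\phi(\bt,F|\hat P)e^{-\bt^T\diag(F\hat SF^T)\bt}$ as $\bb{\phi(\bt,F|P)-\phi(\bt,F|\hat P)}e^{-\bt^T\diag(FSF^T)\bt}+\phi(\bt,F|\hat P)\bb{e^{-\bt^T\diag(FSF^T)\bt}-e^{-\bt^T\diag(F\hat SF^T)\bt}}$. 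Call these term (A), the characteristic-function estimation error with the population covariance frozen in the Gaussian factor, and term (B), the covariance-correction error. Since $\diag(FSF^T)$ and $FSF^T$ are PSD and $x\mapsto e^{-x}$ is $1$-Lipschitz on $[0,\infty)$, and $|\phi(\bt,F|\hat P)|\le 1$, term (B) is bounded pointwise by $\babs{\bt^T\diag\bb{F(S-\hat S)F^T}\bt}\le\|\bt\|^2\|S-\hat S\|$.

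The key device for term (A) is to integrate over $\bt$ \emph{before} estimating. For the joint part this gives $\E_\bt\bbb{e^{-\bt^T\diag(FSF^T)\bt}\bb{\phi(\bt,F|P)-\phi(\bt,F|\hat P)}}=\E_\bx[g_F(\bx)]-\frac1n\sum_{j=1}^n g_F(\bx^{(j)})$, where $g_F(\bx):=\E_\bt\bbb{e^{-\bt^T\diag(FSF^T)\bt}e^{i\bt^TF\bx}}$ satisfies $|g_F(\bx)|\le 1$. Splitting $g_F$ into its real and imaginary parts yields two $1$-uniformly bounded classes $\{g_F:F\in\F\}$, to which I would apply the Rademacher bound of Proposition~\ref{proposition:rademacher_complexity_bound}. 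The product part is treated identically after the telescoping identity $\prod_j a_j-\prod_j b_j=\sum_j\bb{\prod_{l<j}b_l}(a_j-b_j)\bb{\prod_{l>j}a_l}$ with $|a_j|,|b_j|\le1$; this reduces the product error to a sum of $k$ single-coordinate characteristic-function errors, and is the source of the extra factor of $k$.

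To control the empirical Rademacher complexity of $\{g_F\}$ I would establish a Lipschitz bound $|g_F(\bx)-g_{F'}(\bx)|\le L(\bx)\|F-F'\|$: differentiating under the Gaussian expectation in $\bt$ brings down moments of $\bt$ together with a factor $\|\bx\|$. After truncating to $\|\bt\|\le c\sqrt{\log(nC_k)}$ and using subgaussianity of $\bx$ to bound $\max_i\|\bx^{(i)}\|$, the one-step discretization bound (Proposition~\ref{proposition:one_step_discretization}) applied with the covering number of the $k^2$-dimensional ball $\F$ produces the $k^2$ factor and the $\log^2(nC_k)$ factor inside the root, scaled by $\|S\|$ from the exponent. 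For term (B) I would invoke the operator-norm concentration of the sample covariance for subgaussian data, $\|\hat S-S\|=O_P\bb{\|S\|\sigma^2\sqrt{k/n}}$ (the standard result quoted in the subgaussianity remark), which together with $\E_\bt\|\bt\|^2=k$ and the same tail truncation contributes the $\sigma^4\|S\|$-type scaling. Because terms (A) and (B) scale respectively with $k$ and $\sigma^4\|S\|$ (each multiplied by the common $k^2\|S\|\log^2(nC_k)/n$), combining them gives the $\max(k,\sigma^4\|S\|)$ factor, and hence the claimed rate after collecting the relevant high-probability events and discarding the negligible truncation tail.

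I expect the main obstacle to be term (A): simultaneously bounding the Rademacher complexity of $\{g_F\}$ with the correct dependence on $k$ and $\|S\|$ — which hinges on a careful Lipschitz estimate of $g_F$ in $F$ obtained after integrating out the Gaussian $\bt$ — and managing the unboundedness of $\bt$, whose Gaussian tails interact with the subgaussian tails of $\bx$ and are what generate the logarithmic factors. By comparison term (B) is more routine, but aligning its $\sigma^4\|S\|$ scaling with term (A) so that the two combine into a single clean $\max(k,\sigma^4\|S\|)$ rate is delicate bookkeeping rather than a conceptual difficulty.
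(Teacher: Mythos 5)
Your scaffolding largely mirrors the paper's: the cross-term decomposition separating the characteristic-function estimation error (A) from the covariance-correction error (B), the Lipschitz bound on the Gaussian factor together with $\|\hat S-S\|=O_P(\sigma^2\|S\|\sqrt{k/n})$ for term (B), and the telescoping product identity are all exactly the steps in the paper's Theorem~\ref{thm:indscore} and Lemmas~\ref{lem:scorelhs}--\ref{lem:scorerhs}. The genuine gap is in your key device for term (A). Since $\Delta(\bt,F|\cdot)$ is defined as an absolute value, after the (valid) reductions $\babs{\,|a|-|b|\,}\le|a-b|$ and the triangle inequality, the quantity you must control is $\E_{\bt}\babs{\bb{\phi(\bt,F|P)-\phi(\bt,F|\hat P)}e^{-\bt^T\diag(FSF^T)\bt}}$, with the absolute value \emph{inside} the $\bt$-average. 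Your Fubini identity $\E_\bt\bbb{e^{-\bt^T\diag(FSF^T)\bt}\bb{\phi(\bt,F|P)-\phi(\bt,F|\hat P)}}=\E_\bx[g_F(\bx)]-\tfrac1n\sum_j g_F(\bx^{(j)})$ is correct, but it controls only $\babs{\E_\bt[\,\cdot\,]}$, which by Jensen is a \emph{lower} bound for $\E_\bt\babs{\,\cdot\,}$; bounding the empirical process over the bounded class $\{g_F\}$ therefore does not bound the target. The same issue recurs in the product part, where $\psi(\bt,F|\hat P)$ is a product of $k$ empirical averages and each telescoped factor again sits inside $\E_\bt|\cdot|$, so it is not a centered empirical mean of any fixed bounded function of a single sample point.

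The paper sidesteps this by never integrating out $\bt$ before taking the supremum: it proves the fixed-$\bt$ bound $\sup_{F\in\F}|\Delta(\bt,F|P)-\Delta(\bt,F|\hat P)|=O_P\bb{\nt\sqrt{k\|S\|\max(k,\sigma^4\|S\|)\log(nC_t)/n}}$ via an empirical process indexed by $\bu=F^T\bt$ ranging over a $k$-dimensional ball of radius $\nt$, then uses $\sup_F\E_\bt|\cdot|\le\E_\bt\sup_F|\cdot|$ and restricts to the event $\{\nt\le\sqrt{2k\log n}\}$. Note also that the $k^2$ and $\log^2$ factors in the final rate come from this truncation ($\nts\le 2k\log n$) combined with the factor $k$ from the product telescoping and the $k$-dimensional covering --- not, as you suggest, from covering the $k^2$-dimensional matrix ball $\F$. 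To repair your argument you would either need to run the chaining argument jointly over $(F,\bt)$ in the truncated region, or revert to the paper's fixed-$\bt$-then-average route.
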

\begin{proof}
Define $\mathcal{E}=\{\bt:\|\bt\|\leq \sqrt{2k\log n}\}$. We will use the fact that
$P(\mathcal{E}^c)\leq 2/n$ (Example 2.12,~\cite{wainwright2019high}). Also note that by construction, $\Delta(\bt,F|P)\leq 1$. Observe that:
    \bas{
    &\sup_{F\in\mathcal{F}}|\E_{\bt\in N(0,I_k)}\Delta(\bt,F|P)-\E_{\bt\in N(0,I_k)}\Delta(\bt,F|\hat{P})|\\
    &\leq \sup_{F\in\mathcal{F}}\E_{\bt\in N(0,I_k)}|\Delta(\bt,F|P)-\Delta(\bt,F|\hat{P})|\\
    &\leq \E_{\bt\in N(0,I_k)}\sup_{F\in\mathcal{F}}|\Delta(\bt,F|P)-\Delta(\bt,F|\hat{P})|\\
    &\leq \E_{\bt\in N(0,I_k)}\bbb{\left.\sup_{F\in\mathcal{F}}\left|\Delta(\bt,F|P)-\Delta(\bt,F|\hat{P})\right|\right|\mathcal{E}}+\E_{\bt\in N(0,I_k)}\bbb{\left.\sup_{F\in\mathcal{F}}|\Delta(\bt,F|P)-\Delta(\bt,F|\hat{P})\right||\mathcal{E}^c}P(\mathcal{E}^c)\\
    &\leq \E_{\bt\in N(0,I_k)}\bbb{\left.\sup_{F\in\mathcal{F}}\left|\Delta(\bt,F|P)-\Delta(\bt,F|\hat{P})\right|\right|\mathcal{E}}+2/n\\
&=O_P\bb{\sqrt{\frac{k^2\|S\|\max(k,\sigma^{4}\|S\|)\log^2 (n C_k)}{n}}}+2/n , \text{ using Theorem~}\ref{thm:indscore}
    }
    The second inequality follows from Jensen's inequality, the fourth follows from $\E[\sup(.)]\leq \sup(\E[.])$.
\end{proof}

\begin{theorem}\label{thm:indscore}
    Let $\mathcal{F}:=\{F\in \mathbb{R}^{k\times k}:\|F\|\leq 1\}$. Assume that $\bx\sim\text{subgaussian}(\sigma)$. 
    We have:
    \bas{
    \sup_{F\in\mathcal{F}}|\Delta(\bt,F|P)-\Delta(\bt,F|\hat{P})|=O_P\bb{\nt\sqrt{\frac{k\|S\|\max(k,\sigma^{4}\|S\|)\log (nC_t)}{n}}}
    }
    where $C_{t} := \max(1,
    \nts\Tr(S))$.
\end{theorem}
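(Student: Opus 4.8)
The plan is to first strip the absolute values and reduce the claim to controlling the empirical fluctuation of the two complex-valued factors that build $\Delta$. Since $\bigl|\,|a|-|b|\,\bigr|\le|a-b|$, and then $|a-b|\le|\textsc{Joint}_P-\textsc{Joint}_{\hat P}|+|\textsc{Product}_P-\textsc{Product}_{\hat P}|$, it suffices to bound each of these two differences uniformly over $F\in\mathcal F$. Each is a product of a characteristic-function factor and a Gaussian-correction factor, both of modulus at most $1$, so I would apply $|ab-a'b'|\le|a-a'|+|b-b'|$ to split it into (i) the characteristic-function fluctuation and (ii) the covariance-correction fluctuation. This separation is essential: the $\widehat S$ entering (ii) is itself a sample average, so it cannot be absorbed into a single empirical-average-of-a-bounded-function and must be handled by covariance concentration rather than by the empirical-process machinery used for (i).

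The heart of the argument is a dimension reduction for the characteristic-function factors. For the \textsc{Joint} term, $\phi(\bt,F|\cdot)$ depends on $F$ only through $\bw:=F^{T}\bt$, and as $F$ ranges over $\mathcal F$ the vector $\bw$ ranges over the $k$-dimensional ball $\{\|\bw\|\le\nt\}$; this collapses the naive $k^2$-dimensional index set to a $k$-dimensional one. Treating real and imaginary parts separately, I would bound $\sup_F|\phi(\bt,F|P)-\phi(\bt,F|\hat P)|$ by symmetrization (Proposition~\ref{proposition:rademacher_complexity_bound}, with uniform bound $b=1$ so the additive $\delta$-term is of order $\sqrt{\log(\cdot)/n}$) and then a one-step discretization of the resulting sub-Gaussian Rademacher process (Proposition~\ref{proposition:one_step_discretization}). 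The Lipschitz estimate $|e^{i\bw_1^{T}\bx}-e^{i\bw_2^{T}\bx}|\le|(\bw_1-\bw_2)^{T}\bx|$ makes the induced $L^2(\hat P)$ metric satisfy $\rho(\bw_1,\bw_2)^2\le(\bw_1-\bw_2)^{T}\widehat S(\bw_1-\bw_2)\le\|\widehat S\|\,\|\bw_1-\bw_2\|^2$, which is exactly what forces the operator norm $\|S\|$ (through $\|\widehat S\|\approx\|S\|$), rather than $\Tr(S)$, into the variance proxy, while the $k$-ball metric entropy supplies the dimension factor; the outcome is of order $\nt\sqrt{k\|S\|/n}$ up to logarithmic terms.

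For the \textsc{Product} term I would use $\bigl|\prod_j a_j-\prod_j b_j\bigr|\le\sum_j|a_j-b_j|$ (each marginal characteristic function has modulus $\le1$) to reduce to a sum of $k$ one-marginal empirical processes; the $j$-th depends on $F$ only through $\bv_j:=t_jF(j,:)^{T}$, which lies in a $k$-ball of radius $|t_j|\,\|F(j,:)\|\le|t_j|$, so each is $O_P(|t_j|\sqrt{k\|S\|/n})$ by the same argument, and summing with $\sum_j|t_j|\le\sqrt k\,\nt$ yields $O_P(\nt\sqrt{k^2\|S\|/n})$ — this is the $k$ inside the $\max$. For the covariance-correction factors (ii), I would use that $x\mapsto e^{-x}$ is $1$-Lipschitz on $[0,\infty)$, so the fluctuation is at most $|\bt^{T}F(\widehat S-S)F^{T}\bt|$ and its diagonal analogue, each $\le\nts\|F\|^2\|\widehat S-S\|\le\nts\|\widehat S-S\|$, and then invoke a standard sub-Gaussian sample-covariance concentration inequality for $\|\widehat S-S\|$; this is where the sub-Gaussianity assumption and the factor $\sigma^{4}\|S\|$ enter. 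Finally, because the Lipschitz constant of the characteristic-function process is the unbounded quantity $\|\bx\|$, I would truncate on an event of the form $\{\max_j\|\bx^{(j)}\|^2\le c\,\Tr(S)\log n\}$, which holds with probability $1-o(1)$; bounding the discarded mass and the covering radius on this event is precisely what produces the $\log(nC_t)$ factor with $C_t=\max(1,\nts\Tr(S))$. Collecting the three contributions and retaining the dominant one yields the $\max(k,\sigma^{4}\|S\|)$ inside the square root.

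The main obstacle is the empirical-process control of the characteristic-function factor, where one must simultaneously exploit the reparametrization $\bw=F^{T}\bt$ to avoid a spurious $k^2$, drive the operator norm $\|S\|$ (not $\Tr(S)$) into the increment variance through the $\widehat S$-weighted metric, and tame the unbounded sub-Gaussian Lipschitz constant $\|\bx\|$ by truncation without losing more than the $\log(nC_t)$ factor. By contrast, the reverse-triangle reduction and the covariance-correction term are comparatively routine, the latter reducing to an off-the-shelf sub-Gaussian covariance bound.
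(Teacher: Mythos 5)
Your proposal follows essentially the same route as the paper's proof: the same four-term decomposition into characteristic-function fluctuations and covariance-correction fluctuations, the same symmetrization plus one-step discretization (Propositions~\ref{proposition:rademacher_complexity_bound} and~\ref{proposition:one_step_discretization}) for the former, the same telescoping product bound with $\sum_j|t_j|\le\sqrt{k}\nt$ for the marginal term, and the same sub-Gaussian covariance concentration for the latter. The only differences are minor technical choices that do not change the rate: you bound the empirical metric by $\|\widehat S\|\,\|\bw_1-\bw_2\|^2$ where the paper uses the looser $\Tr(\widehat S)\,\|\bw_1-\bw_2\|^2$ (both are absorbed by the $\max(k,\sigma^4\|S\|)$ factor), and you tame the random Lipschitz constant by truncation where the paper integrates it out via Cauchy--Schwarz.
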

\begin{proof}
\bas{
\Delta(\bt,F|P)-\Delta(\bt,F|\hat{P})\leq &\babs{\phi(\bt,F|P)-      \phi(\bt,F|\hat{P})}\exp\bb{-\bt^T\diag(FSF^T)\bt}\\
                                    &+\phi(\bt,F|\hat{P})\babs{\exp\bb{-\bt^T\diag(F\widehat{S}F^T)\bt}-\exp\bb{-\bt^T\diag(FSF^T)\bt}}\\
                                    &+\babs{\psi(\bt,F|P)-\psi(\bt,F|\hat{P})}\exp\bb{-\bt^TFSF^T\bt}\\
                                    &+\psi(\bt,F|\hat{P})\babs{\exp\bb{-\bt^TF\widehat{S}F^T\bt}-\exp\bb{-\bt^TFSF^T\bt}}
}
Finally, for some $S'=\lambda S+(1-\lambda)\hat{S}$,  
\bas{
|\exp(-\bt^T FSF^T\bt)-\exp(-\bt^T F\hat{S}F^T\bt)|&=\left|\left\langle\left.\partial_{S}\exp(-\bt^T FSF^T\bt)\right|_{S'},\hat{S}-S\right\rangle\right|\\
&\leq \exp(-\bt^T FS'F^T\bt)\left|\bt^TF(S-\hat{S})F^T\bt\right| \\
&\leq \|\bt\|^2\|S-\hat{S}\|= \|\bt\|^2 O_P\bb{ \sigma^{2}\norm{S}\sqrt{\frac{k}{n}}}
}
where the last result follows from Theorem 4.7.1 in \cite{vershynin2018high}. Now, for the second term, observe that:
\ba{
&\left|\phi(\bt,F|\hat{P})\bb{\exp\bb{-\bt^T\diag(F\widehat{S}F^T)\bt}-\exp\bb{-\bt^T\diag(FSF^T)\bt}}\right|\notag\\
&\leq \exp\bb{-\bt^T\diag(FSF^T)\bt}\left|\exp\bb{-\bt^T\diag(F(\widehat{S}-S)F^T)\bt}-1\right|\label{eq:t2}
}
We next have that, with probability at least $1-1/n$,
\bas{   
    \left\|\diag\bb{F (S-\widehat{S}) F^T}\right\|_{2} 
    &\leq \left\| F (S-\widehat{S}) F^T \right\|_{2} 
    \leq C\bb{ \sigma^{2}\norm{S}\sqrt{\frac{k\log n}{n}}}
}
Thus with probability at least $1-1/n$, using the inequality $|1-e^{x}| \leq 2|x|$, $\forall x \in \bbb{-1,1}$, Eq~\ref{eq:t2} leads to:
\bas{
\left|\phi(\bt,F|\hat{P})\bb{\exp\bb{-\bt^T\diag(F\widehat{S}F^T)\bt}-\exp\bb{-\bt^T\diag(FSF^T)\bt}}\right|
\leq 2C\|\bt\|^2\bk\bb{\sigma^{2}\norm{S}\sqrt{\frac{k\log n}{n}}}
}

Note that the matrices $\diag\bb{F (S-\widehat{S}) F^T}$ and $FSF^{T}$ are positive semi-definite and $\bt$ is unit-norm. Observe that, using Lemmas~\ref{lem:scorelhs} and~\ref{lem:scorerhs}, we have:
\bas{
    &\sup_{F\in \mathcal{F}}|\Delta(\bt,F|P)-\Delta(\bt,F|\hat{P})|
    \\ &\leq O_P\bb{\nt\sqrt{\frac{k\Tr(S)\log\bb{nC_{t}}}{n}}}
    +O_P\bb{ \nt\sigma^{2}\norm{S}\sqrt{\frac{k\log n}{n}}}+O_P\bb{\nt\sqrt{\frac{k^2\|S\|\log n}{n}}}\\
&=O_P\bb{\nt\sqrt{\frac{k\|S\|\max(k,\sigma^{4}\|S\|)\log (nC_t)}{n}}}
}
\end{proof}
 
\begin{lemma}\label{lem:scorelhs}
    Define $\mathcal{F}:=\{F\in \mathbb{R}^{k\times k}:\|F\|\leq 1\}$. Let $\left\{\bx^{(i)}\right\}_{i \in [n]}$ be i.i.d samples from a subgaussian$\bb{\sigma}$ distribution. We have: 
    \bas{
        \sup_{F\in \mathcal{F}}\left|\frac{1}{n}\sum_{j=1}^{n}\exp(i\bt^TF\bx^{\bb{j}})-\E_{\bx}\bbb{\exp(i \bt^T F\bx)}\right| = O_P\bb{\nt\sqrt{\frac{k\Tr(S)\log\bb{nC_{t}}}{n}}}
    }
    where $C_{t} := \max(1,
    \nts\Tr(S))$.
\end{lemma}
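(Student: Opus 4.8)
The plan is to control the complex empirical process by splitting it into real and imaginary parts and by exploiting a dimension reduction of the index set. The modulus $\left|\frac1n\sum_j e^{i\bt^TF\bx^{(j)}}-\E e^{i\bt^TF\bx}\right|$ is at most the sum of the deviations of $\frac1n\sum_j\cos(\bt^TF\bx^{(j)})$ and of $\frac1n\sum_j\sin(\bt^TF\bx^{(j)})$ from their means, so it suffices to bound $\sup_{F\in\mathcal F}$ of each real-valued empirical process; the cosine and sine cases are identical since both functions are $1$-Lipschitz and bounded by $1$. The crucial observation is that $\bt^TF\bx=(F^T\bt)^T\bx$, so each summand depends on $F$ only through the vector $\bw:=F^T\bt\in\mathbb R^k$. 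As $F$ ranges over the operator-norm ball $\mathcal F$, $\bw$ ranges over exactly $\{\bw:\|\bw\|\le\nt\}$: the inclusion $\|F^T\bt\|\le\nt$ is immediate, and $F=\bt\bw^T/\nts$ realizes any such $\bw$ while satisfying $\|F\|\le 1$. This reduces an index set of ambient dimension $k^2$ to a Euclidean ball of dimension $k$, which is precisely what produces the $\sqrt k$ rather than $k$ in the final rate.

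After reparametrizing, I would symmetrize using Proposition~\ref{proposition:rademacher_complexity_bound}: with $b=1$ and deviation parameter of order $\sqrt{\log n/n}$, the supremum is bounded, with probability at least $1-1/n$, by twice the expected Rademacher complexity $\frac1n\E_{X,\epsilon}\left[\sup_{\|\bw\|\le\nt}\left|\sum_j\epsilon_j\cos(\bw^T\bx^{(j)})\right|\right]$ plus a term of order $\sqrt{\log n/n}$. For fixed data, $Z_\bw:=\sum_j\epsilon_j\cos(\bw^T\bx^{(j)})$ is a zero-mean subgaussian process in $\bw$ with respect to the metric $\rho(\bw,\bw')^2=\sum_j\left(\cos(\bw^T\bx^{(j)})-\cos(\bw'^T\bx^{(j)})\right)^2$, and $1$-Lipschitzness of $\cos$ together with Cauchy--Schwarz gives $\rho(\bw,\bw')^2\le\sum_j\left((\bw-\bw')^T\bx^{(j)}\right)^2\le\|\bw-\bw'\|^2\sum_j\|\bx^{(j)}\|^2$. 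Thus $\rho$ is dominated by the Euclidean metric scaled by $\sqrt{\sum_j\|\bx^{(j)}\|^2}=\sqrt{n\Tr(\hat S)}$, the process has diameter $D$ of order $\nt\sqrt{n\Tr(\hat S)}$, and the $\delta$-covering number of the radius-$\nt$ ball in $\mathbb R^k$ satisfies $\log\mathcal N(\delta;\rho)\le C\,k\log(D/\delta)$.

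I would then feed these estimates into the one-step discretization bound of Proposition~\ref{proposition:one_step_discretization} (combined with Remark~\ref{remark:subgaussian_suprema} to pass from $\E_\epsilon[\sup_\bw Z_\bw]$ to the symmetrized increment), applied with a net at scale $\delta$ of order $D/n$ and with the residual local oscillation controlled by the Lipschitz domination $\rho\le\sqrt{\sum_j\|\bx^{(j)}\|^2}\,\|\bw-\bw'\|$ so that it contributes at lower order. The dominant term $\sqrt{D^2\log\mathcal N(\delta;\rho)}\le D\sqrt{C\,k\log(D/\delta)}$ then gives $\E_\epsilon[\sup_\bw|Z_\bw|]$ of order $\nt\sqrt{n\,\Tr(\hat S)\,k\log(nC_t)}$, since $D/\delta$ is polynomial in $n$ and the identity $D^2\asymp n\nts\Tr(\hat S)$ makes $\log(D/\delta)$ of order $\log(nC_t)$ with $C_t=\max(1,\nts\Tr(S))$. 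Dividing by $n$ turns the Rademacher complexity into $\nt\sqrt{k\,\Tr(\hat S)\log(nC_t)/n}$.

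Finally, to pass from $\hat S$ to $S$ and from the high-probability statement to the stated $O_P$ conclusion, I would use subgaussianity of $\bx$: since $\Tr(\hat S)=\frac1n\sum_j\|\bx^{(j)}\|^2$ concentrates around $\Tr(S)$, on an event of probability at least $1-1/n$ one has $\Tr(\hat S)$ comparable to $\Tr(S)$, which simultaneously controls the diameter and caps the logarithmic factor at $\log(nC_t)$; combined with Jensen's inequality ($\E\sqrt{\Tr(\hat S)}\le\sqrt{\Tr(S)}$) and Markov's inequality this delivers the $O_P$ rate, the complementary event contributing only $O(1/n)$ because $|Z_\bw|\le n$ always. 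The main obstacle is the chaining/discretization step: controlling the local oscillation term in Proposition~\ref{proposition:one_step_discretization} and tracking constants so that the covering-radius ratio and the diameter jointly produce exactly the factor $\log(nC_t)$ rather than an uncontrolled $\log(1/\delta)$, all while conditioning on the high-probability event on which $\Tr(\hat S)$ is comparable to $\Tr(S)$.
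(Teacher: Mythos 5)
Your proposal is correct and follows essentially the same route as the paper's proof: the reduction of the index set from $F$ to $\bw=F^{T}\bt$ in a ball of radius $\nt$, the Lipschitz domination of the empirical metric by $\sqrt{n\Tr(\hat S)}\,\|\bw-\bw'\|$ giving the covering number $(3\sqrt{\tau_n}/\delta)^{k}$, the one-step discretization bound of Proposition~\ref{proposition:one_step_discretization} combined with symmetrization via Proposition~\ref{proposition:rademacher_complexity_bound}, and the passage from $\Tr(\hat S)$ to $\Tr(S)$ by Jensen/Cauchy--Schwarz. The only cosmetic differences are your explicit cosine/sine split (the paper works with the complex exponential directly, bounding $\|\nabla_{\bu}\exp(i\bu^{T}\bx)\|\le\|\bx\|$) and a slightly different choice of discretization scale $\delta$, neither of which changes the argument or the resulting rate.
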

\begin{proof}
Let $\phi(\bt,F;\bx^{(i)})=\exp(i \bt^TF\bx^{(i)})$.
Next, we note that $\norm{F}_{2} \leq 1$ imply $\norm{F^{T}\bt} \leq \nt$. Therefore,
\bas{
    \sup_{F\in \mathcal{F}}\left|\frac{1}{n}\sum_{j=1}^{n}\exp(i\bt^TF\bx^{\bb{j}})-\E_{\bx}\bbb{\exp(i \bt^T F\bx)}\right| \leq \sup_{\bu \in \mathbb{R}^{k}, \norm{\bu}\leq1}\left|\frac{1}{n}\sum_{j=1}^{n}\exp(i\bu^{T}\bx^{\bb{j}})-\E_{\bx}\bbb{\exp(i \bu^{T}\bx)}\right|
}
Let $f_X(\bu)=\sum_i\epsilon_i \xi(\bu;\bx^{(i)})$.
We will argue for unit vectors $\bu$, and later scale them by $\nt$.
Define $\mathcal{B}_{k} := \left\{\bu \in \mathbb{R}^{k}, \norm{\bu}\leq1\right\}$, $\xi(\bu;\bx) := \exp(i \bu^T\bx)$ and let 
\bas{
    d(\bu,\bu')^2:=\sum_i (\xi(\bu;\bx^{(i)})-\xi(\bu';\bx^{(i)}))^2
}
Then we have,
\bas{
\|\nabla_{\bu} \exp(i \bu^T\bx)\|_{2} \; = \; \|\bx\||-\sin(\bu^T\bx)+i\cos(\bu^T\bx)| \; \leq \; \|\bx\|
}
Then, 
\bas{
    \left|\xi(\bu;\bx^{\bb{i}})-\xi(\bt;\bx^{\bb{i}})\right| &\leq \min\bb{\|\nabla_{\bu''} \xi(\bu;\bx^{\bb{i}})\|_{2}\left\|\bu-\bu'\right\|, 2} \\
    &\leq \min\bb{\|\bx^{\bb{i}}\|_{2}\left\|\bu-\bu'\right\| ,2}
}
Let $\widehat{S} := \sum_{i}\bx^{(i)}\bb{\bx^{(i)}}^T/n$ and $\tau_n := n\Tr\bb{\widehat{S}}$.
We next have, 
\bas{
D^2 &:= \max_{\bu,\bu'}d(\bu,\bu')^2 \leq \min\left\{4n, \max_{\bu,\bu'}\sum_i\|\bx^{\bb{i}}\|_{2}^{2}\left\|\bu-\bu'\right\|^{2}\right\}\\
&\leq 4\min\left\{n, \tau_n\right\}\\
}

Next, we bound the covering number $N(\delta;\mathcal{B}_{k},d_\bu)$. Note that
$N(\delta;\mathcal{B}_{k},d_\bu)\leq N(\delta/\sqrt{\tau_n};\mathcal{B}_{k},\|.\|_2)$ since,
\bas{
    d\bb{\bu,\bu'}^{2} &=  \sum_i (\xi(\bu;\bx^{(i)})-\xi(\bu';\bx^{(i)}))^2 \\
    &\leq \sum_i\|\bx^{(i)}\|_{2}^{2}\left\|\bu-\bu'\right\|^{2} \\
    &= \sum_{i}\Tr\bb{\bx^{(i)}\bb{\bx^{(i)}}^T}\left\|\bu-\bu'\right\|^{2}
}

Using Cauchy-Schwarz, we have:
\bas{
|f_X(\bu)-f_X(\bu')|\leq \sum_{i=1}^n  \epsilon_i |\xi(\bu;\bx^{(i)})-\xi(\bu;\bx^{(i)})|\leq \sqrt{n d(\bu,\bu')^2}
}

Therefore, we have $N(\delta;\mathcal{B}_{k},d_\bu) \leq \bb{\frac{3\sqrt{\tau_n}}{\delta}}^{k}$. Therefore, using Proposition~\ref{proposition:one_step_discretization}, 
\bas{
    \E_{\bx, \epsilon}\bbb{\sup_{F\in\mathcal{F}}|f_X(\bu)-f_X(\bu')|}
    &\leq 2\E_{\bx}\bbb{\sup_{d(\bu,\bu')\leq \delta}|f_X(\bu)-f_X(\bu')|}\;+\;4\E_{\bx}\bbb{D\sqrt{\log N(\delta;\mathcal{B}_k,d_{\bu})}}\\
    &\leq 2\E_{\bx}\bbb{\inf_{\delta > 0}\left\{\delta\sqrt{n} + 8\sqrt{\min\left\{n, \tau_n\right\}}\sqrt{2k\log\bb{\frac{3\sqrt{\tau_n}}{\delta}}}\right\}} \\
    &= O\bb{\E_{\bx} \sqrt{(k\min(n,\tau_n)\log\bb{\max(\tau_n,n)}} }, \text{ setting } \delta := \sqrt{\min(1,\tau_n/n)} \\
     &= \sqrt{kn}O\bb{\sqrt{\min(1,\Tr(S))\log\bb{n\max(1,\Tr(S))} }}, \text{ Cauchy-Schwarz inequality}
}
Now we recall that $\|F^T\bt\| \leq \nt$, since all vectors $\bt$ appear as $\bt^T F^T \bx^{(i)}$, this simply leads to scaling $\tau$ and $\|S\|$ by $\nts$.
Dividing both sides by $n$, we have
\bas{
    \frac{1}{n}\E_{\bx, \epsilon}\bbb{\sup_{F\in\mathcal{F}}|f_X(\bt)-f_X(\bt')|} &\leq \sqrt{\frac{k}{n}}O\bb{\sqrt{\min(1,\nts\Tr(S))\log\bb{n\max(1,
    \nts\Tr(S))} }}
}
Thus, using Proposition~\ref{proposition:rademacher_complexity_bound} and using the definition of $C_{t}$, we have,
\bas{
    \sup_{F\in \mathcal{F}}|\phi(\bt,F|\hat{P})-\phi(\bt,F|P)|=\sqrt{\frac{k}{n}}O\bb{\sqrt{\nts\Tr(S)\log\bb{nC_{t}} }} = \nt\sqrt{\frac{k}{n}}O\bb{\sqrt{\Tr(S)\log\bb{nC_{t}}}}
}
\end{proof}
\begin{lemma}\label{lem:scorerhs}
Let $\mathcal{F}=\{F\in \mathbb{R}^{k\times k}:\|F\|\leq 1\}$. We have:
\bas{
    \sup_{F\in \mathcal{F}}\left|\psi(\bt,F|\hat{P})-\psi(\bt,F|P)\right|\leq  O_P\bb{\nt\sqrt{\frac{k^2\|S\|\log n}{n}}}
}
\end{lemma}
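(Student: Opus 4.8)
The plan is to reduce the difference of the two \emph{products} of marginal characteristic functions to a sum of $k$ univariate empirical-process deviations, and then to bound each one by a Rademacher/covering argument in the spirit of Lemma~\ref{lem:scorelhs}, with one crucial modification: carrying the operator norm $\|S\|$ rather than $\Tr(S)$ through the pseudometric. First I would introduce, for each $j\in[k]$, the factors $\hat\phi_j(\bt,F):=\frac1n\sum_{r=1}^n\exp\bb{it_j(F\bx^{(r)})_j}$ and $\phi_j(\bt,F):=\E\bbb{\exp\bb{it_j(F\bx)_j}}$, so that $\psi(\bt,F|\hat{P})=\prod_{j=1}^k\hat\phi_j$ and $\psi(\bt,F|P)=\prod_{j=1}^k\phi_j$. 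Since each factor is an average (resp.\ expectation) of unit-modulus complex numbers, $\babs{\hat\phi_j}\le1$ and $\babs{\phi_j}\le1$, so the standard telescoping bound for products of contractions gives $\babs{\prod_j\hat\phi_j-\prod_j\phi_j}\le\sum_{j=1}^k\babs{\hat\phi_j-\phi_j}$.

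Taking the supremum over $\mathcal{F}$ and noting that the $j$-th factor depends on $F$ only through its $j$-th row $\bw_j:=F(j,:)^T$, which satisfies $\norm{\bw_j}=\norm{F^T\be_j}\le\norm{F}\le1$, I obtain
\bas{
\sup_{F\in\mathcal{F}}\babs{\psi(\bt,F|\hat{P})-\psi(\bt,F|P)}\le\sum_{j=1}^k\sup_{\norm{\bw}\le1}\babs{\tfrac1n\sum_{r=1}^n\exp\bb{it_j\bw^T\bx^{(r)}}-\E\exp\bb{it_j\bw^T\bx}}.
}
This decouples the problem into $k$ univariate uniform-convergence statements over the unit ball $\mathcal{B}_k$.

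Second, I would bound each univariate term by reusing the covering argument of Lemma~\ref{lem:scorelhs}, but with the \emph{sharper} increment $\babs{\exp(it_j\bw^T\bx)-\exp(it_j\bw'^T\bx)}\le|t_j|\,\babs{(\bw-\bw')^T\bx}$ (rather than the crude $\norm{\bx}\,\norm{\bw-\bw'}$ bound used there). The induced pseudometric then satisfies $d(\bw,\bw')^2\le t_j^2(\bw-\bw')^T\bb{\sum_r\bx^{(r)}\bb{\bx^{(r)}}^T}(\bw-\bw')\le t_j^2\,n\norm{\widehat{S}}\,\norm{\bw-\bw'}^2$, which replaces $\Tr(\widehat{S})$ in the diameter by the operator norm $\norm{\widehat{S}}$, concentrating to $\norm{S}$ under subgaussianity. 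Consequently $D^2\lesssim\min\{n,\,t_j^2 n\norm{\widehat{S}}\}$ and $N(\delta;\mathcal{B}_k,d)\le\bb{3|t_j|\sqrt{n\norm{\widehat{S}}}/\delta}^k$; feeding these into Proposition~\ref{proposition:one_step_discretization} and then Proposition~\ref{proposition:rademacher_complexity_bound}, exactly as in Lemma~\ref{lem:scorelhs} and optimizing $\delta$, yields each term of order $O_P\bb{|t_j|\sqrt{k\norm{S}\log n/n}}$ (the logarithmic factor, $\log(n\max(1,t_j^2\norm{S}))$, being absorbed into $\log n$ in the regime of interest).

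Finally, I would aggregate the $k$ terms and convert the $\ell_1$ norm of $\bt$ to its $\ell_2$ norm by Cauchy–Schwarz, $\sum_{j=1}^k|t_j|\le\sqrt{k}\,\nt$, giving $\sum_j O_P\bb{|t_j|\sqrt{k\norm{S}\log n/n}}=O_P\bb{\nt\sqrt{k^2\norm{S}\log n/n}}$, as claimed. The main obstacle is precisely the second step: one must verify that the operator-norm pseudometric (instead of the trace bound of Lemma~\ref{lem:scorelhs}) is what produces $\norm{S}$ rather than $\Tr(S)$, and handle, inside the $O_P$, both the $\norm{\widehat{S}}\to\norm{S}$ concentration and the capping at $n$ in the diameter. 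The extra factor of $k$ beyond the $\sqrt{k}$ already inside each univariate bound is exactly the price of the $\ell_1$-to-$\ell_2$ conversion when summing the $k$ marginals, and this is what distinguishes the $\sqrt{k^2\norm{S}}$ rate here from the single-term $\sqrt{k\,\Tr(S)}$ rate of Lemma~\ref{lem:scorelhs}.
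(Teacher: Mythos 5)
Your proposal is correct and follows essentially the same route as the paper: telescope the product of marginal characteristic functions into a sum of $k$ single-row deviations, bound each one by the covering/Rademacher argument of Lemma~\ref{lem:scorelhs}, and aggregate using $\sum_{j}|t_j|\le\sqrt{k}\,\nt$. The one place you go beyond the paper is in justifying why the operator norm $\|S\|$ (rather than $\Tr(S)$) appears in each marginal bound --- the paper simply invokes ``the same argument as Lemma~\ref{lem:scorelhs}'' after rescaling $\bx^{(i)}\leftarrow t_j\bx^{(i)}$, which would literally yield $t_j^2\Tr(S)$, whereas your sharper increment bound $|e^{it_j\bw^T\bx}-e^{it_j\bw'^T\bx}|\le|t_j|\,|(\bw-\bw')^T\bx|$ makes the pseudometric scale with $\|\widehat{S}\|$ and is in fact what is needed to obtain the stated rate.
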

\begin{proof}
Let $\mathcal{B}_{k} := \left\{\bu \in \mathbb{R}^{k}, \norm{\bu}\leq1\right\}$
Now define $\psi_j(\bt,F;\bx)=\E[\exp(it_j F_j^T\bx)]$. Also define $f^{(j)}_X(F)=\frac{1}{n}\sum_i\epsilon_i \psi_j(\bt,F;\bx^{\bb{i}})$. 
Thus, using the same argument as in Lemma~\ref{lem:scorelhs} for $\bt \leftarrow \nt\be_{j}$ and $\bx^{\bb{i}} \leftarrow t_{j}\bx^{\bb{i}}$, 
\bas{\sup_{F\in \mathcal{F}}|\psi_j(\bt,F|\hat{P})-\psi_j(\bt,F|P)|=O_P\bb{|t_{j}|\sqrt{\frac{k\|S\|\log n}{n}}}
}
Finally, we see that:
\bas{
\sup_{F\in \mathcal{F}}\left|\psi(\bt,F|\hat{P})-\psi(\bt,F|P)\right| &\leq \sum_j  O_P\bb{|t_j|\sqrt{\frac{k\|S\|\log n}{n}}}\\
&= O_P\bb{\sqrt{\frac{k\|S\|\log n}{n}}}\bb{\sum_{j=1}^{k}|t_j|} 
&= \nt O_P\bb{\sqrt{\frac{k^2\|S\|\log n}{n}}}
}
The above is true because, for $|a_i|,|b_i|\leq 1$, $i=1,\dots, k$,
\bas{
\left|\prod_{i=1}^k a_i-\prod_{i=1}^k b_i\right|=\left|\sum_{j=0}^{k-1} \prod_{i\leq j}
b_i (a_{j+1}-b_{j+1})\prod_{i=j+2}^k
a_i\right| \leq \sum_{j=0}^{k-1}|a_{j+1}-b_{j+1}| 
}
\end{proof}

\subsection{Local convergence}
\label{subsection:ica_local_convergence}
In this section, we provide the proof of Theorem \ref{theorem:local_convergence}. Recall the ICA model from Eq~\ref{eq:ICA_noise_model}, 
\bas{
    & \bx = B\bz + \bg, \\
    & \diag\bb{\bvar} := \cov\bb{\bz}, \;\; \Sigma := \cov\bb{\bg} = B\diag\bb{\bvar}B^{T} + \Sigma
}
We provide the proof for the CGF-based contrast function. The proof for the CHF-based contrast function follows similarly.
From the Proof of Theorem \ref{theorem:CGF_CHF_quasi_orth} we have,
\bas{
    \nabla f\bb{\bu|P} &= \frac{\mathbb{E}\bbb{\exp\bb{\bu^{T}B\bz}B\bz}}{\mathbb{E}\bbb{\exp\bb{\bu^{T}B\bz}}} - B\diag\bb{\bvar}B^{T}\bu, \\ 
    \nabla^{2} f\bb{\bu|P} &= \frac{\mathbb{E}\bbb{\exp\bb{\bu^{T}B\bz}B\bz \bz^{T}B^{T}}}{\mathbb{E}\bbb{\exp\bb{\bu^{T}B\bz}}} - \frac{\mathbb{E}\bbb{\exp\bb{\bu^{T}B\bz}B\bz}\mathbb{E}\bbb{\exp\bb{\bu^{T}B\bz}z^{T}B^{T}}}{\mathbb{E}\bbb{\exp\bb{\bu^{T}B\bz}}^{2}} - B\diag\bb{\bvar}B^{T} \\ 
    &= B\bbb{\underbrace{\frac{\mathbb{E}\bbb{\exp\bb{\bu^{T}B\bz}\bz\bz^{T}}}{\mathbb{E}\bbb{\exp\bb{\bu^{T}B\bz}}} - \frac{\mathbb{E}\bbb{\exp\bb{\bu^{T}B\bz}\bz}\mathbb{E}\bbb{\exp\bb{\bu^{T}B\bz}\bz^{T}}}{\mathbb{E}\bbb{\exp\bb{\bu^{T}B\bz}}^{2}}}_{\text{Covariance of } \bz \; \sim \; \dfrac{\mathbb{P}\bb{\bz}\exp\bb{\bu^{T}B\bz}}{\mathbb{E}\bbb{\exp\bb{\bu^{T}B\bz}}}} - \diag\bb{\bvar}}B^{T}
}
The new probability density $\dfrac{\mathbb{P}\bb{\bz}\exp\bb{\bu^{T}B\bz}}{\mathbb{E}\bbb{\exp\bb{\bu^{T}B\bz}}}$ is an exponential tilt of the original pdf, and since $\{z_{i}\}_{i=1}^{d}$ are independent, and the new tilted density also factorizes over the $z_{i}$'s, therefore, the covariance under this tilted density is also diagonal. 
\\ \\ 
Let $C := BD_{0}B^{T}$. We denote the $i^{th}$ column of B as $B_{.i}$. Define functions 
\bas{
    r_{i}\bb{\bu} &:= \ln\bb{\mathbb{E}\bbb{\exp\bb{\bu^{T}B_{.i}z_{i}}}} -\Var\bb{z_{i}}\frac{\bu^{T}B_{.i}B_{.i}^{T}\bu}{2} \\ 
    g_{i}\bb{\bu} &:= \nabla r_{i}\bb{\bu} 
}
Then $f\bb{\bu|P} = \sum_{i=1}^{d}r_{i}\bb{\bu}$, $\nabla f\bb{\bu|P} = \sum_{i=1}^{d} g_{i}\bb{\bu}$. 
\noindent
For fixed point iteration, $\bu_{k+1} = \frac{\nabla f\bb{C^{-1}\bu_{k}|P}}{\left\|\nabla f\bb{C^{-1}\bu_{k}|P}\right\|}$. Consider the function $\nabla f\bb{C^{-1}\bu|P}$. Let $\bm{e}_{i}$ denote the $i^{th}$ axis-aligned unit basis vector of $\mathbb{R}^{n}$. Since $B$ is full-rank, we can denote $\balpha = B^{-1}\bu$. We have, 
\bas{
    \nabla f\bb{C^{-1}\bu|P} &= \sum_{i=1}^{d} g_{i}\bb{C^{-1}\bu} \\ 
                        &= \sum_{i=1}^{d} \frac{\mathbb{E}\bbb{\exp\bb{\bu^{T}\bb{C^{-1}}^{T}B_{.i}z_{i}}B_{.i}z_{i}}}{\mathbb{E}\bbb{\exp\bb{\bu^{T}\bb{C^{-1}}^{T}B_{.i}z_{i}}}} -\Var\bb{z_{i}}B_{.i}B_{.i}^{T}C^{-1}\bu \\ 
                        &= \sum_{i=1}^{d} \frac{\mathbb{E}\bbb{\exp\bb{\bu^{T}\bb{B^{T}}^{-1}D_{0}^{-1}\be_{i}z_{i}}B_{.i}z_{i}}}{\mathbb{E}\bbb{\exp\bb{\bu^{T}\bb{B^{T}}^{-1}D_{0}^{-1}\be_{i}z_{i}}}} -\Var\bb{z_{i}}B_{.i}\be_{i}^{T}D_{0}^{-1}B^{-1}u \\ 
                        &= \sum_{i=1}^{d} \bb{\frac{\mathbb{E}\bbb{\exp\bb{\frac{\alpha_{i}}{\bb{D_{0}}_{ii}}z_{i}}z_{i}}}{\mathbb{E}\bbb{\exp\bb{\frac{\alpha_{i}}{\bb{D_{0}}_{ii}}z_{i}}}} -\Var\bb{z_{i}}\frac{\alpha_{i}}{\bb{D_{0}}_{ii}}}B_{.i}
}
For $t \in \mathbb{R}$, define the function $q_{i}\bb{t} : \mathbb{R} \rightarrow \mathbb{R}$ as - 
\bas{
    q_{i}\bb{t} &:= \frac{\mathbb{E}\bbb{\exp\bb{\frac{t}{\bb{D_{0}}_{ii}}z_{i}}z_{i}}}{\mathbb{E}\bbb{\exp\bb{\frac{t}{\bb{D_{0}}_{ii}}z_{i}}}} -\Var\bb{z_{i}}\frac{t}{\bb{D_{0}}_{ii}}
}
Note that $q_{i}\bb{0} = \mathbb{E}\bbb{z_{i}} = 0$.
For $\bm{t} = \bb{t_1, t_2, \cdots t_d} \in \mathbb{R}^{d}$, let $\bm{q}\bb{\bm{t}} := [q_{1}\bb{t_1}, q_{2}\bb{t_2} \cdots q_{d}\bb{t_d}]^{T} \in \mathbb{R}^{d}$. Then, 
\bas{
    \nabla f\bb{C^{-1}u|P} = B\bm{q}\bb{\balpha}
}
Therefore, if $\bu_{t+1} = B\balpha_{t+1}$, then
\bas{
    & \;\;\;\;\;\;\;\; B\balpha_{t+1} = \frac{B\bm{q}\bb{\balpha_{t}}}{\left\|B\bm{q}\bb{\balpha_{t}}\right\|} \\ 
    & \implies \balpha_{t+1} = \frac{\bm{q}\bb{\balpha_{t}}}{\left\|B\bm{q}\bb{\balpha_{t}}\right\|} \\ 
    & \implies \forall i \in [d], \; \bb{\balpha_{t+1}}_{i} = \frac{q_{i}\bb{\bb{\balpha_{t}}_{i}}}{\left\|B\bm{q}\bb{\balpha_{t}}\right\|}
}
At the fixed point, $\balpha^{*} = \dfrac{\be_{1}}{\|B\be_{1}\|}$ and $\dfrac{B\bq\bb{\balpha^{*}}}{\left\|B\bq\bb{\balpha^{*}}\right\|_{2}} = B\be_{1}$. Therefore, 
\ba{
    & \forall i \in [2,d], \bb{\balpha_{t+1}}_{i} - \alpha^{*}_{i} = \bb{\balpha_{t+1}}_{i} = \frac{q_{i}\bb{\bb{\balpha_{t}}_{i}}}{\left\|B\bm{q}\bb{\balpha_{t}}\right\|} \label{eq:i_not_1} \\ 
    & \text{ for } i = 1, \bb{\balpha_{t+1}}_{1} - \alpha^{*}_{1} = \frac{q_{1}\bb{\bb{\balpha_{t}}_{i}}}{\left\|B\bm{q}\bb{\balpha_{t}}\right\|} - \frac{1}{\|B\be_{1}\|} \label{eq:i_eq_1}
}
\noindent
Note the smoothness assumptions on $q_{i}(.)$ mentioned in Theorem \ref{theorem:local_convergence}, $\forall i \in [d]$, 
\begin{enumerate}
    \item $\sup_{t \in [-\|B^{-1}\|_{2},\|B^{-1}\|_{2}]}|q_{i}\bb{t}| \leq c_{1}$ 
    \item $\sup_{t \in [-\|B^{-1}\|_{2},\|B^{-1}\|_{2}]}|q_{i}'\bb{t}| \leq c_{2}$
    \item $\sup_{t \in [-\|B^{-1}\|_{2},\|B^{-1}\|_{2}]}|q_{i}''\bb{t}| \leq c_{3}$
\end{enumerate}
Since $\forall k, \; \|\bu_{k}\| = 1$, therefore, $\forall k, \; \|\balpha_{t}\| \leq \|B^{-1}\|_{2}$. We seek to prove that the sequence $\{\balpha_{t}\}_{k=1}^{n}$ converges to $\balpha^{*}$.
\subsubsection{Taylor expansions}
Consider the function $g_{i}\bb{\bm{y}} := \dfrac{q_{i}\bb{y_{i}}}{\|B\bm{q}\bb{\by}\|}$ for $\by \in \mathbb{R}^{d}$. We start by computing the gradient, $\nabla_{\bm{y}} g_{i}\bb{\bm{y}}$.
\begin{lemma} \label{lemma:grad_g_i} 
    Let $g_{i}\bb{\bm{y}} := \dfrac{q_{i}\bb{\bm{y}_{i}}}{\|B\bm{q}\bb{\by}\|}$, then we have
    \bas{
        \bbb{\nabla_{\bm{y}} g_{i}\bb{\bm{y}}}_{j} = \begin{cases}
                                & \dfrac{1}{\|B\bm{q}\bb{\by}\|}\dfrac{\partial q_{i}\bb{y_{i}}}{\partial y_{i}} - \dfrac{q_{i}\bb{y_{i}}}{\|B\bm{q}\bb{\by}\|^{2}}\dfrac{\partial \|B\bm{q}\bb{\by}\|}{\partial q_{i}\bb{y_{i}}}\dfrac{\partial q_{i}\bb{y_{i}}}{\partial y_{i}}, \;\; \text{ for } j = i \\ \\
                                & -\dfrac{q'_{j}\bb{y_{j}}}{\|B\bm{q}\bb{\by}\|}\dfrac{q_{i}\bb{y_{i}}\be_{j}^{T}B^{T}B\bm{q}\bb{\by}}{\|B\bm{q}\bb{\by}\|^{2}}, \;\; \text{ for } j \neq i
                            \end{cases}
    }
\end{lemma}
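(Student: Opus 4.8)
The plan is to view $g_{i}$ as a product of a scalar numerator and the reciprocal of a scalar denominator, namely $g_{i}(\bm{y}) = q_{i}(y_{i})\,\phi(\bm{y})^{-1}$ where $\phi(\bm{y}) := \norm{B\bm{q}(\bm{y})}$, and to differentiate coordinate-by-coordinate via the product rule. The single structural fact that drives everything is that each component $q_{i}$ is a function of the scalar $y_{i}$ alone, so the Jacobian of the map $\bm{y} \mapsto \bm{q}(\bm{y})$ is diagonal; concretely $\partial q_{i}(y_{i})/\partial y_{j}$ equals $q_{i}'(y_{i})$ when $j=i$ and vanishes when $j \neq i$. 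I would state this observation first, since it is exactly what makes the off-diagonal case collapse to a single term.

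Next I would differentiate the denominator. Writing $\phi(\bm{y}) = \sqrt{\bm{q}(\bm{y})^{T} B^{T} B\, \bm{q}(\bm{y})}$ and applying the chain rule for the Euclidean norm composed with the linear map $B$ and then with $\bm{q}$, together with the diagonal-Jacobian fact above, isolates a single factor $q_{j}'(y_{j})$ and gives $\partial_{y_{j}}\phi(\bm{y}) = \frac{\be_{j}^{T} B^{T} B\, \bm{q}(\bm{y})}{\phi(\bm{y})}\,q_{j}'(y_{j})$. Then I would assemble the pieces: the product rule yields $\partial_{y_{j}} g_{i} = \bb{\partial_{y_{j}} q_{i}(y_{i})}\phi^{-1} - q_{i}(y_{i})\,\phi^{-2}\,\partial_{y_{j}}\phi$, and substituting the two ingredients reproduces the two stated cases. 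For $j=i$ the first term survives as $q_{i}'(y_{i})/\phi$, while the second is written in the lemma's chain-rule form $\frac{q_{i}(y_{i})}{\phi^{2}}\frac{\partial \phi}{\partial q_{i}(y_{i})}\frac{\partial q_{i}(y_{i})}{\partial y_{i}}$; for $j \neq i$ the first term drops out and the second reduces to $-\frac{q_{j}'(y_{j})}{\phi}\frac{q_{i}(y_{i})\,\be_{j}^{T} B^{T} B\,\bm{q}(\bm{y})}{\phi^{2}}$.

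Honestly, there is no substantive obstacle here: this is a routine gradient computation, and the entire content is the chain rule for $\norm{B\,\bm{q}(\bm{y})}$. The only care required is bookkeeping, namely recognizing the diagonal structure of the Jacobian of $\bm{q}$ so that off-diagonal numerator derivatives vanish, and correctly tracking the single surviving $q_{j}'(y_{j})$ factor when the $\bm{y}$-derivative is pushed through the norm. I would keep $\phi(\bm{y}) := \norm{B\bm{q}(\bm{y})}$ as shorthand throughout to keep the two cases transparent before unfolding back into the displayed form.
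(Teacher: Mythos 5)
Your proposal is correct and follows essentially the same route as the paper's proof: both apply the product/quotient rule to $q_{i}(y_{i})/\|B\bm{q}(\by)\|$ and differentiate the norm via $\nabla_{\by}\|A\by\| = A^{T}A\by/\|A\by\|$ combined with the diagonal Jacobian of $\by\mapsto\bm{q}(\by)$. No gaps.
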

\begin{proof} The derivative w.r.t $y_{i}$ is given as - 
\bas{
    \frac{\partial g_{i}\bb{\by}}{\partial y_{i}} &= \frac{1}{\|B\bm{q}\bb{\by}\|}\frac{\partial q_{i}\bb{y_{i}}}{\partial y_{i}} - \frac{q_{i}\bb{y_{i}}}{\|B\bm{q}\bb{\by}\|^{2}}\frac{\partial \|B\bm{q}\bb{\by}\|}{\partial y_{i}} \\ 
    &= \frac{1}{\|B\bm{q}\bb{\by}\|}\dfrac{\partial q_{i}\bb{y_{i}}}{\partial y_{i}} - \frac{q_{i}\bb{y_{i}}}{\|B\bm{q}\bb{\by}\|^{2}}\frac{\partial \|B\bm{q}\bb{\by}\|}{\partial q_{i}\bb{y_{i}}}\frac{\partial q_{i}\bb{y_{i}}}{\partial y_{i}}
} 
Note that 
\bas{
    \nabla_{\by} \|A\by\|_{2} = \frac{1}{\|A\by\|}A^{T}A\by
}
Therefore, 
\ba{
    \frac{\partial g_{i}\bb{\by}}{\partial y_{i}} &= \frac{1}{\|B\bm{q}\bb{\by}\|}\frac{\partial q_{i}\bb{y_{i}}}{\partial y_{i}} - \frac{q_{i}\bb{y_{i}}}{\|B\bm{q}\bb{\by}\|^{2}}\frac{1}{\|B\bm{q}\bb{\by}\|}\be_{i}^{T}B^{T}B\bm{q}\bb{\by}\frac{\partial q_{i}\bb{y_{i}}}{\partial y_{i}} \notag \\
    &= \frac{q_{i}'\bb{y_{i}}}{\|B\bm{q}\bb{\by}\|}\bbb{1 - \frac{q_{i}\bb{y_{i}}\be_{i}^{T}B^{T}B\bm{q}\bb{\by}}{\|B\bm{q}\bb{\by}\|^{2}}} \label{eq:derivative_of_gi_xi}
}
For $j \neq i$, the derivative w.r.t $y_{j}$ is given as -  
\ba{
   \frac{\partial g_{i}\bb{\by}}{\partial y_{j}} &= -\frac{q_{i}\bb{y_{i}}}{ \|B\bm{q}\bb{\by}\|^{2}}\frac{\partial \|B\bm{q}\bb{\by}\|}{\partial q_{j}\bb{y_{j}}}\frac{\partial q_{j}\bb{y_{j}}}{\partial y_{j}} \notag \\ 
   &= -\frac{q'_{j}\bb{y_{j}}}{\|B\bm{q}\bb{\by}\|}\frac{q_{i}\bb{y_{i}}\be_{j}^{T}B^{T}B\bm{q}\bb{\by}}{\|B\bm{q}\bb{\by}\|^{2}} \label{eq:derivative_of_gi_xj}
}
\end{proof}
Next, we bound $q_{i}\bb{t}$ and $q'_{i}\bb{t}$. 
\begin{lemma}
    \label{lemma:q_i_q_i'_bound} Under the smoothness assumptions on $q_{i}(.)$ mentioned in Theorem~\ref{theorem:local_convergence}, we have for $t \in [-\|B^{-1}\|_{2},\|B^{-1}\|_{2}]$, 
    \begin{enumerate}
        \item $\left|q_{1}\bb{t} - q_{1}\bb{\alpha^{*}_{1}}\right| \leq c_{2}\left|t-\alpha^{*}_{1}\right|, \;\; \left|q_{1}'\bb{t} - q_{1}'\bb{\alpha^{*}_{1}}\right| \leq c_{3}\left|t-\alpha^{*}_{1}\right|$
        \item $\forall i$, $|q_{i}\bb{t}| \leq \frac{c_{3}t^{2}}{2}, \;\; |q_{i}'\bb{t}| \leq c_{3}|t|$
    \end{enumerate}
\end{lemma}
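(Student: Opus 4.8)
The plan is to obtain both parts from elementary Taylor and mean-value arguments, after first recording two vanishing-at-zero facts about $q_i$. The only structural inputs are the three uniform bounds $|q_i|\le c_1$, $|q_i'|\le c_2$, $|q_i''|\le c_3$ on the interval $\mathcal{B}=[-\|B^{-1}\|_{2},\|B^{-1}\|_{2}]$, together with $q_i(0)=0$ and the (not-yet-stated) identity $q_i'(0)=0$; everything else is routine.

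First I would verify that the two points appearing in Part 1, namely $t$ and $\alpha_1^*$, both lie in $\mathcal{B}$. By hypothesis $t\in\mathcal{B}$, and since $\balpha^*=B^{-1}\bu^*$ for the unit vector $\bu^*$ at the fixed point, $|\alpha_1^*|\le\|\balpha^*\|\le\|B^{-1}\|_{2}$, so $\alpha_1^*\in\mathcal{B}$ as well. Because $\mathcal{B}$ is an interval, every intermediate point produced by the mean value theorem stays inside $\mathcal{B}$, where the derivative bounds apply. Part 1 is then immediate: writing $q_1(t)-q_1(\alpha_1^*)=q_1'(\xi)(t-\alpha_1^*)$ for some $\xi$ between $t$ and $\alpha_1^*$ and using $|q_1'(\xi)|\le c_2$ gives the first inequality, and applying the same argument to $q_1'$ with the bound $|q_1''|\le c_3$ gives the second.

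For Part 2 the key preliminary is to establish $q_i'(0)=0$ (the fact $q_i(0)=\mathbb{E}[z_i]=0$ is already recorded). Writing $K_i(s):=\log\mathbb{E}\bbb{\exp\bb{s z_i}}$ for the cumulant generating function, the explicit form of $q_i$ reads $q_i(t)=K_i'\bb{t/\bb{D_0}_{ii}}-\Var(z_i)\,t/\bb{D_0}_{ii}$, whence $q_i'(t)=\bb{1/\bb{D_0}_{ii}}K_i''\bb{t/\bb{D_0}_{ii}}-\Var(z_i)/\bb{D_0}_{ii}$. Since $K_i''(0)=\Var(z_i)$, the two terms cancel at $t=0$ and $q_i'(0)=0$; equivalently this is Assumption~\ref{assump:third}(b) applied to the single-variable contrast function, of which $q_i$ is the scaled derivative. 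With $q_i(0)=q_i'(0)=0$ in hand, a second-order Taylor expansion about $0$ gives $q_i(t)=\tfrac12 q_i''(\xi)t^2$ for some $\xi$ between $0$ and $t$, and a first-order expansion gives $q_i'(t)=q_i''(\eta)t$ for some $\eta$ between $0$ and $t$; since $\xi,\eta\in\mathcal{B}$, bounding $|q_i''|\le c_3$ yields $|q_i(t)|\le c_3 t^2/2$ and $|q_i'(t)|\le c_3|t|$, as claimed.

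I expect the only genuinely non-mechanical point to be the identity $q_i'(0)=0$: it is not one of the stated uniform bounds and must be extracted either from the tilted-moment computation above (the derivative of the exponentially tilted mean at zero tilt is exactly the variance that the correction term subtracts) or from the contrast-function assumptions. Once that vanishing derivative is secured, both displays in Part 2 collapse to one-line applications of Taylor's theorem with the prescribed second-derivative bound, and Part 1 is a direct Lipschitz estimate.
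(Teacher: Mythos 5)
Your proof is correct and takes essentially the same route as the paper's: Taylor/mean-value expansions about $0$ for part 2 (using $q_i(0)=q_i'(0)=0$ together with the uniform bound on $q_i''$) and about $\alpha_1^*$ for part 1 (using the uniform bounds on $q_1'$ and $q_1''$). Your explicit verification that $q_i'(0)=0$ via the tilted-mean/CGF computation is a detail the paper merely asserts, and your first-order expansion of $q_i'$ with an intermediate point $\eta$ is actually cleaner than the paper's, which writes $q_i'(t)=q_i'(0)+t\,q_i''(0)$ where a mean-value point is needed.
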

\begin{proof}
First consider $q_{i}\bb{t}$ and $q_{i}'\bb{t}$ for $i \neq 1$. Using Taylor expansion around $t = 0$, we have for some $c \in \bb{0,t}$ and $\forall i \neq 1$,
\bas{
    & q_{i}\bb{t} = q_{i}\bb{0} + tq_{i}'\bb{0} + \frac{t^{2}}{2}q_{i}''\bb{c} \text{ and } \\
    & q_{i}'\bb{t} = q_{i}'\bb{0} + tq_{i}''\bb{0}
}
Now, we know that $q_{i}\bb{0} = q_{i}'\bb{0} = 0$. Then using Assumption \ref{assump:third}, we have, for $t \in [-\|B^{-1}\|_{2},\|B^{-1}\|_{2}]$, 
\ba{
    & |q_{i}\bb{t}| \leq \frac{c_{3}t^{2}}{2} \text{ and } \label{eq:taylor_bound_1} \\
    & |q_{i}'\bb{t}| \leq c_{3}|t| \label{eq:taylor_bound_2}
}
Similarly, using Taylor expansion around $\alpha^{*}_{1} = \frac{1}{\|B\be_{1}\|_{2}}$ for $q_{1}\bb{.}$ we have, for some $c' \in \bb{0,t}$
\bas{
    & q_{1}\bb{t} = q_{1}\bb{\alpha^{*}_{1}} + \bb{t-\alpha^{*}_{1}}q_{1}'\bb{c'} \text{ and } \\
    & q_{1}'\bb{t} = q_{1}'\bb{\alpha^{*}_{1}} + \bb{t-\alpha^{*}_{1}}q_{1}''\bb{c'}
}
Therefore, using Assumption \ref{assumption:2}, we have, for $t \in [-\|B^{-1}\|_{2},\|B^{-1}\|_{2}]$
\ba{
    & \left|q_{1}\bb{t} - q_{1}\bb{\alpha^{*}_{1}}\right| \leq c_{2}\left|t-\alpha^{*}_{1}\right| \text{ and } \label{eq:taylor_bound_3} \\
    & \left|q_{1}'\bb{t} - q_{1}'\bb{\alpha^{*}_{1}}\right| \leq c_{3}\left|t-\alpha^{*}_{1}\right| \label{eq:taylor_bound_4}
}
\end{proof}
\subsubsection{Using convergence radius}
In this section, we use the Taylor expansion results for $q_{i}\bb{.}$ and $q_{i}'\bb{.}$ in the previous section to analyze the following functions for $\by \in \mathbb{R}^{d}$,
\begin{enumerate}
    \item $w\bb{\by} := \|B\by\|$
    \item $v\bb{\by;i} := \dfrac{\be_{i}^{T}B^{T}B\bm{q}\bb{\by}}{\|B\bm{q}\bb{\by}\|^{2}}$
\end{enumerate} 

Under the constraints specified in the theorem statement we have,  
\ba{
    & \|\by-\balpha^{*}\|_{2} \leq R, \; R \leq \max\left\{c_{2}/c_{3}, 1\right\}, \notag \\
    & \epsilon := \frac{\left\|B\right\|_{F}}{\left\|B\be_{1}\right\|}\max\left\{\frac{c_{2}}{\left|q_{1}\bb{\alpha^{*}_{1}}\right|}, \frac{c_{3}}{\left|q_{1}'\bb{\alpha^{*}_{1}}\right|}\right\}, \;\; \epsilon R \leq \frac{1}{10} \label{eq:local_convergence_radius}
}
We start with $w\bb{\by}$.
\begin{lemma}
    \label{lemma:normBy_bounds}
    $\forall \; \by \in \mathbb{R}^{d},$ satisfying \eqref{eq:local_convergence_radius}, let $\delta\bb{\by} := \bm{q}\bb{\by} - q_{1}\bb{\alpha^{*}_{1}}\be_{1}$. Then, we have
    \begin{enumerate}
        \item $\bb{1-\epsilon\|\by-\balpha^{*}\|}\left|q_{1}\bb{\alpha^{*}_{1}}\right|\left\|B\be_{1}\right\| \leq \|B\bm{q}\bb{\by}\| \leq \bb{1+\epsilon\|\by-\balpha^{*}\|}\left|q_{1}\bb{\alpha^{*}_{1}}\right|\left\|B\be_{1}\right\|$
        \item $\|\delta\bb{\by}\| \leq c_{2}\|\by-\balpha^{*}\|$
    \end{enumerate}
\end{lemma}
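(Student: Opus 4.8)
The plan is to isolate the dominant component of $\bm{q}\bb{\by}$ by writing $\bm{q}\bb{\by} = q_1\bb{\alpha_1^*}\be_1 + \delta\bb{\by}$, where $\delta\bb{\by}$ is precisely the perturbation that vanishes at $\by = \balpha^*$. Both assertions then reduce to controlling $\|\delta\bb{\by}\|$: the second assertion is exactly such a bound, while the first follows from it by the triangle and reverse-triangle inequalities after multiplying through by $B$. Accordingly, I would establish part (2) first and then derive part (1) from it.

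For part (2), I would bound $\delta\bb{\by}$ coordinatewise. Since $\balpha^*$ is supported on the first coordinate with $\bb{\balpha^*}_1 = \alpha_1^*$ and $\bb{\balpha^*}_i = 0$ for $i \neq 1$, we have $\|\by - \balpha^*\|^2 = \bb{y_1 - \alpha_1^*}^2 + \sum_{i \neq 1} y_i^2$. The first coordinate of $\delta\bb{\by}$ is $q_1\bb{y_1} - q_1\bb{\alpha_1^*}$, bounded by $c_2\babs{y_1 - \alpha_1^*}$ via Lemma~\ref{lemma:q_i_q_i'_bound}(1); for $i \neq 1$ the coordinate is $q_i\bb{y_i} = q_i\bb{y_i} - q_i\bb{0}$, bounded by $c_2\babs{y_i}$ by the mean value theorem, using $q_i\bb{0} = 0$ together with the uniform bound $\babs{q_i'} \leq c_2$ on $\mathcal{B}$. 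Summing the squared coordinate bounds gives $\|\delta\bb{\by}\|^2 \leq c_2^2\|\by - \balpha^*\|^2$, i.e. $\|\delta\bb{\by}\| \leq c_2\|\by - \balpha^*\|$. A point to verify is that every evaluation point ($y_i$, $\alpha_1^*$, and the interpolating points from the mean value theorem) lies in $\mathcal{B} = [-\|B^{-1}\|_2, \|B^{-1}\|_2]$; this holds because $\|\by\| \leq \|B^{-1}\|_2$ along the iteration and $\mathcal{B}$ is a convex interval.

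For part (1), applying $B$ to the decomposition gives $B\bm{q}\bb{\by} = q_1\bb{\alpha_1^*}B\be_1 + B\delta\bb{\by}$. The triangle inequality yields $\|B\bm{q}\bb{\by}\| \leq \babs{q_1\bb{\alpha_1^*}}\|B\be_1\| + \|B\delta\bb{\by}\|$, and the reverse triangle inequality yields the matching lower bound $\babs{q_1\bb{\alpha_1^*}}\|B\be_1\| - \|B\delta\bb{\by}\|$. I would then bound $\|B\delta\bb{\by}\| \leq \|B\|_F\|\delta\bb{\by}\| \leq \|B\|_F c_2\|\by - \balpha^*\|$ using part (2), and factor out $\babs{q_1\bb{\alpha_1^*}}\|B\be_1\|$. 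The essential check is that the resulting multiplicative error is at most $\epsilon\|\by - \balpha^*\|$: this is exactly where the definition $\epsilon = \frac{\|B\|_F}{\|B\be_1\|}\max\left\{c_2/\babs{q_1\bb{\alpha_1^*}},\, c_3/\babs{q_1'\bb{\alpha_1^*}}\right\}$ enters, since $\frac{\|B\|_F c_2}{\babs{q_1\bb{\alpha_1^*}}\|B\be_1\|} \leq \epsilon$ by retaining the first argument of the max.

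The computation is essentially routine, so I do not anticipate a genuine obstacle; the only real care lies in the bookkeeping that makes the constants match the stated $\epsilon$. Specifically, I would (a) use the linear bound $c_2\babs{y_i}$ rather than the quadratic bound $\tfrac{c_3}{2}y_i^2$ of Lemma~\ref{lemma:q_i_q_i'_bound}(2) for the off-diagonal coordinates, so that the single constant $c_2$ appears uniformly and aligns with the first argument of the max defining $\epsilon$, and (b) confirm that all function arguments remain inside $\mathcal{B}$ so that Lemma~\ref{lemma:q_i_q_i'_bound} and the mean value theorem are applicable. These choices, rather than any analytic difficulty, are what make the two bounds emerge in precisely the form stated.
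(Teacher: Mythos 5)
Your proof is correct and follows essentially the same strategy as the paper's: decompose $\bm{q}\bb{\by}$ as $q_{1}\bb{\alpha^{*}_{1}}\be_{1}+\delta\bb{\by}$, bound $\delta\bb{\by}$ coordinatewise via Lemma~\ref{lemma:q_i_q_i'_bound}, and transfer the bound to $\|B\bm{q}\bb{\by}\|$ using the definition of $\epsilon$. You deviate in two minor, harmless ways. For the coordinates $i\neq 1$ you invoke the mean value theorem with the uniform bound $|q_i'|\leq c_2$ to get $|q_i\bb{y_i}|\leq c_2|y_i|$ directly, whereas the paper uses the quadratic Taylor bound $\tfrac{c_3}{2}y_i^2$ and then needs the radius condition $R\leq c_2/c_3$ to convert it to $c_2|y_i|$; your route is slightly cleaner since it does not consume that hypothesis. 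For part~(1) you use the triangle and reverse-triangle inequalities on $B\bm{q}\bb{\by}=q_1\bb{\alpha_1^*}B\be_1+B\delta\bb{\by}$, whereas the paper applies the mean value theorem to the norm function at an interpolation point $\gamma\bb{\by}$; both yield the same error term $\|B\|_F\,c_2\|\by-\balpha^*\|\leq\epsilon\left|q_1\bb{\alpha_1^*}\right|\|B\be_1\|\|\by-\balpha^*\|$, and your version is the more elementary of the two.
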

\begin{proof}
    Consider the vector $\delta\bb{\by} := \bm{q}\bb{\by} - q_{1}\bb{\alpha^{*}_{1}}\be_{1}$. Then, 
\ba{
    |\bb{\delta\bb{\by}}_{\ell}| &\leq \begin{cases}
                                 c_{2}\left|y_{1} - \alpha^{*}_{1}\right|, \text{ for } \ell = 1, \text{ using Eq~\ref{eq:taylor_bound_3}} \\ 
                                \frac{c_{3}}{2}\bb{y}_{\ell}^{2}, \text{ for } \ell \neq 1, \text{ using Eq~\ref{eq:taylor_bound_1}}
                              \end{cases} \label{eq:delta_definition}
}
Note that 
\ba{
 \|\delta\bb{\by}\| \leq c_{2}\|\by-\balpha^{*}\| \label{eq:delta_norm}
}
Now consider $w\bb{\by}$. Using the mean-value theorem on the Euclidean norm we have, 
\ba{
    \|B\bm{q}\bb{\by}\| &= \left|q_{1}\bb{\alpha^{*}_{1}}\right|\left\|B\be_{1}\right\| + \frac{1}{\|B\gamma\bb{\by}\|}\bb{B^{T}B\gamma\bb{\by}}^{T}\delta\bb{\by}, \label{eq:Bwx_taylor_expansion}
}
where $\gamma\bb{\by} = \mu \bm{q}\bb{\by} + \bb{1-\mu}q_{1}\bb{\alpha^{*}_{1}}\be_{1}, \; \mu \in \bb{0,1}$. Then, 
\ba{
    \left\|\frac{1}{\|B\gamma\bb{\by}\|}\bb{B^{T}B\gamma\bb{\by}}^{T}\delta\bb{\by}\right\| &\leq \frac{1}{\|B\gamma\bb{\by}\|}\left\|B^{T}B\gamma\bb{\by}\right\|\left\|\delta\bb{\by}\right\| \notag \\ 
    &\leq \frac{1}{\|B\gamma\bb{\by}\|}\left\|B\right\|\left\|B\gamma\bb{\by}\right\|\left\|\delta\bb{\by}\right\| \notag \\ 
    &= \left\|B\right\|\left\|\delta\bb{\by}\right\| \notag \\ 
    &\leq c_{2}\left\|B\right\|\|\by-\balpha^{*}\|, \text{ using Eq~\ref{eq:delta_norm}} \notag \\
    &\leq c_{2}\left\|B\right\|_{F}\|\by-\balpha^{*}\|, \notag \\
    &\leq \epsilon\left|q_{1}\bb{\alpha^{*}_{1}}\right|\left\|B\be_{1}\right\|\|\by-\balpha^{*}\|, \text{ using Eq~\ref{eq:local_convergence_radius}}
}
Therefore using \ref{eq:Bwx_taylor_expansion},
\ba{
   & \bb{1-\epsilon\|\by-\balpha^{*}\|}\left|q_{1}\bb{\alpha^{*}_{1}}\right|\left\|B\be_{1}\right\| \leq \|B\bm{q}\bb{\by}\| \leq \bb{1+\epsilon\|\by-\balpha^{*}\|}\left|q_{1}\bb{\alpha^{*}_{1}}\right|\left\|B\be_{1}\right\| \label{eq:Bwnorm_approximation}
}
\end{proof}
Finally, we consider the function $v\bb{\by;i} := \dfrac{\be_{i}^{T}B^{T}B\bm{q}\bb{\by}}{\|B\bm{q}\bb{\by}\|^{2}}$.
\begin{lemma}
    \label{lemma:normvy_i_bounds} $\forall \; \by \in \mathbb{R}^{d}$ satisfying \eqref{eq:local_convergence_radius}, we have
    \begin{enumerate}
        \item $\text{For } i = 1, \; 1 - 5\epsilon\|\by-\balpha^{*}\| \leq q_{1}\bb{\alpha^{*}_{1}}v\bb{\by|1} \leq 1 + 5\epsilon\|\by-\balpha^{*}\|$
        \item $\text{For } i \neq 1, \; \left|q_{1}\bb{\alpha^{*}_{1}}v\bb{\by;i}\right| \leq \bb{1+5\epsilon\|\by-\balpha^{*}\|}\dfrac{\left\|B\be_{i}\right\|}{\|B\be_{1}\|}$
    \end{enumerate}
\end{lemma}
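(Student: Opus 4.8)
The plan is to decompose $\bm{q}(\by)$ around its value at the fixed point and reduce everything to the two norm estimates already furnished by Lemma~\ref{lemma:normBy_bounds}. Writing $\delta(\by) := \bm{q}(\by) - q_{1}(\alpha^{*}_{1})\be_{1}$, that lemma supplies both the two-sided control $\bb{1 - \epsilon\|\by-\balpha^{*}\|}|q_{1}(\alpha^{*}_{1})|\|B\be_{1}\| \le \|B\bm{q}(\by)\| \le \bb{1 + \epsilon\|\by-\balpha^{*}\|}|q_{1}(\alpha^{*}_{1})|\|B\be_{1}\|$ and the perturbation bound $\|\delta(\by)\| \le c_{2}\|\by-\balpha^{*}\|$. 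Throughout I abbreviate $x := \epsilon\|\by-\balpha^{*}\|$, noting that the radius constraint \eqref{eq:local_convergence_radius} forces $x \le 1/10$; this is exactly the slack that makes the crude constant $5$ in the statement attainable.

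For $i = 1$ I would expand the numerator as $\be_{1}^{T} B^{T} B\bm{q}(\by) = q_{1}(\alpha^{*}_{1})\|B\be_{1}\|^{2} + (B\be_{1})^{T} B\delta(\by)$. Dividing the leading term by $\|B\bm{q}(\by)\|^{2}$ and multiplying by $q_{1}(\alpha^{*}_{1})$ gives $q_{1}(\alpha^{*}_{1})^{2}\|B\be_{1}\|^{2}/\|B\bm{q}(\by)\|^{2}$, which by the two-sided norm bound lies in $[(1+x)^{-2},(1-x)^{-2}]$. The cross term is controlled by Cauchy--Schwarz together with $\|B\delta(\by)\| \le \|B\|_{F}\|\delta(\by)\| \le c_{2}\|B\|_{F}\|\by-\balpha^{*}\|$; after dividing by $\|B\bm{q}(\by)\|^{2}$ and multiplying by $|q_{1}(\alpha^{*}_{1})|$, the prefactor $c_{2}\|B\|_{F}/\bb{|q_{1}(\alpha^{*}_{1})|\|B\be_{1}\|}$ is at most $\epsilon$ by the definition of $\epsilon$ in \eqref{eq:local_convergence_radius}, so this contribution is bounded in absolute value by $x/(1-x)^{2}$. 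Combining the leading and cross terms and invoking the elementary inequalities $(1+x)^{-2} \ge 1 - 2x$, $(1-x)^{-2} \le 1 + 3x$, and $x/(1-x)^{2} \le 2x$ (all valid for $x \le 1/10$) collapses the range to $1 \pm 5x$, which is the first claim.

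For $i \ne 1$ the argument is shorter: I would bound the numerator directly, $|\be_{i}^{T} B^{T} B\bm{q}(\by)| = |(B\be_{i})^{T}(B\bm{q}(\by))| \le \|B\be_{i}\|\,\|B\bm{q}(\by)\|$, so that $|v(\by;i)| \le \|B\be_{i}\|/\|B\bm{q}(\by)\|$. Multiplying by $|q_{1}(\alpha^{*}_{1})|$ and using the lower norm bound $\|B\bm{q}(\by)\| \ge (1-x)|q_{1}(\alpha^{*}_{1})|\|B\be_{1}\|$ yields $|q_{1}(\alpha^{*}_{1}) v(\by;i)| \le \|B\be_{i}\|/\bb{(1-x)\|B\be_{1}\|}$, and $1/(1-x) \le 1 + 5x$ for $x \le 1/10$ finishes the second claim.

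The routine algebra is harmless; the only place demanding care is the $i=1$ case, where the leading and cross terms must be combined with the correct signs so that both the upper and lower deviations stay within $5x$. The step I would single out as the crux is recognizing that the definition of $\epsilon$ in \eqref{eq:local_convergence_radius} was engineered precisely so that the cross-term prefactor $c_{2}\|B\|_{F}/\bb{|q_{1}(\alpha^{*}_{1})|\|B\be_{1}\|}$ collapses to $\epsilon$; without that normalization the perturbation term would not align with the $\epsilon\|\by-\balpha^{*}\|$ scale used in Lemma~\ref{lemma:normBy_bounds}, and the clean $1 \pm 5\epsilon\|\by-\balpha^{*}\|$ form would be lost.
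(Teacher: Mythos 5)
Your proposal is correct and follows essentially the same route as the paper's proof: the same decomposition $\bm{q}(\by)=q_{1}(\alpha^{*}_{1})\be_{1}+\delta(\by)$, the same reliance on Lemma~\ref{lemma:normBy_bounds} for the two norm estimates, the same identification of the cross-term prefactor with $\epsilon$, and the same Cauchy--Schwarz bound for $i\neq 1$. If anything, your final constant-chasing (using $(1-x)^{-2}\le 1+3x$ together with $x/(1-x)^{2}\le 2x$) is slightly more careful than the paper's, which combines $(1-\theta)^{-2}\le 1+5\theta$ with the cross term and still asserts the $1\pm 5\theta$ range.
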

\begin{proof}
    Define $\theta := \epsilon\|\by-\balpha^{*}\|$ for convenience of notation. We have, 
    \ba{
        \frac{\be_{i}^{T}B^{T}B\bm{q}\bb{\by}}{\|B\bm{q}\bb{\by}\|^{2}} &= q_{1}\bb{\alpha^{*}_{1}}\frac{\be_{i}^{T}B^{T}B\be_{1}}{\|B\bm{q}\bb{\by}\|^{2}} + \frac{\be_{i}^{T}B^{T}B\delta\bb{\by}}{\|B\bm{q}\bb{\by}\|^{2}}, \text{ using definition of } \delta\bb{\by}
    }
    Therefore, if $i = 1$, then using Lemma~\ref{lemma:normBy_bounds},
    \ba{
        \frac{1}{\bb{1+\theta}^{2}} + \frac{q_{1}\bb{\alpha^{*}_{1}}\be_{1}^{T}B^{T}B\delta\bb{\by}}{\|B\bm{q}\bb{\by}\|^{2}} \leq \frac{q_{1}\bb{\alpha^{*}_{1}}\be_{1}^{T}B^{T}B\bm{q}\bb{\by}}{\|B\bm{q}\bb{\by}\|^{2}} \leq \frac{1}{\bb{1-\theta}^{2}} + \frac{q_{1}\bb{\alpha^{*}_{1}}\be_{1}^{T}B^{T}B\delta\bb{\by}}{\|B\bm{q}\bb{\by}\|^{2}}  \label{eq:R_{1}{x}_bound}
    }
    Using the following inequalities  -
    \bas{
        & \frac{1}{\bb{1+\theta}^{2}} \geq 1-2\theta, \theta \geq 0, \text{ and,  }\frac{1}{\bb{1-\theta}^{2}} \leq 1+5\theta, \theta \leq \frac{1}{5}
    }
    and observing that using Eq~\ref{eq:local_convergence_radius}, and Lemma~\ref{lemma:normBy_bounds}, 
    \bas{
    \left|\frac{q_{1}\bb{\alpha^{*}_{1}}\be_{1}^{T}B^{T}B\delta\bb{\by}}{\|B\bm{q}\bb{\by}\|^{2}}\right| &\leq \left|q_{1}\bb{\alpha^{*}_{1}}\right|\frac{\left\|B\be_{1}\right\|}{\bb{1-\theta}^{2}\left|q_{1}\bb{\alpha^{*}_{1}}\right|^{2}\left\|B\be_{1}\right\|^{2}}\left\|B\right\|\left\|\delta\bb{\by}\right\|  \\
    &\leq \frac{1}{\bb{1-\theta}^{2}}\frac{c_{2}\|B\|}{\left|q_{1}\bb{\alpha^{*}_{1}}\right|\|B\be_{1}\|}\left\|\by-\balpha^{*}\right\| \\
    &\leq \frac{\epsilon}{\bb{1-\theta}^{2}}\left\|\by-\balpha^{*}\right\|
    }
    Therefore we have from Eq~\ref{eq:R_{1}{x}_bound}, 
    \ba{
        1 - 5\theta \leq q_{1}\bb{\alpha^{*}_{1}}v\bb{\by;i} \leq 1 + 5\theta  \label{eq:R_i_bound_i_eq_1}
    }
    where we used $\theta := \epsilon\|\by-\balpha^{*}\|$. For the case of $i \neq 1$, using Lemma~\ref{lemma:normBy_bounds} we have
    \ba{
        \left|q_{1}\bb{\alpha^{*}_{1}}\right|\left|v\bb{\by;i}\right| &= \left|q_{1}\bb{\alpha^{*}_{1}}\right|\left|\frac{\be_{i}^{T}B^{T}B\bm{q}\bb{\by}}{\|B\bm{q}\bb{\by}\|^{2}}\right| \notag \\
        &\leq \left|q_{1}\bb{\alpha^{*}_{1}}\right|\left|\frac{\left\|B\be_{i}\right\|}{\|B\bm{q}\bb{\by}\|}\right| \notag \\
        &\leq \frac{1}{\bb{1-\theta}^{2}}\frac{\left\|B\be_{i}\right\|}{\|B\be_{1}\|} \notag \\
        &\leq \bb{1+5\theta}\frac{\left\|B\be_{i}\right\|}{\|B\be_{1}\|} \label{eq:R_i_bound_i_neq_1}
    }
\end{proof}
We now operate under the assumption that Eq~\ref{eq:local_convergence_radius} holds for $\by = \balpha_{t}$ and inductively show that it holds for $\by = \balpha_{t+1}$ as well. 
Recall the function $g_{i}\bb{\by} := \dfrac{q_{i}\bb{y_{i}}}{\|B\bm{q}\bb{\by}\|}$. By applying the mean-value theorem for $g_{i}\bb{.}$ for the points $\balpha_{t}$ and $\balpha^{*}$, we have from Eq~\ref{eq:i_not_1} and \ref{eq:i_eq_1} -
\ba{
    \left|\bb{\balpha_{t+1}}_{i} - \alpha^{*}_{i}\right| &= \left|g_{i}\bb{\balpha_{t}} - g_{i}\bb{\balpha^{*}}\right| \notag \\ 
    &= \left|\nabla g_{i}\bb{\bbeta_{i}}^{T}\bb{\balpha_{t} - \balpha^{*}}\right| \text{ for } \bbeta_{i} := \bb{1-\lambda_{i}}\balpha_{t} + \lambda_{i}\balpha^{*}, \; \lambda_{i} \in \bb{0,1} \notag \\
    &\leq \|\nabla g_{i}\bb{\bbeta_{i}}\|\|\balpha_{t}-\balpha^{*}\|
    \label{eq:recursion_i}
}
Note the induction hypothesis assumes that Eq~\ref{eq:local_convergence_radius} is true for $x = \balpha_{t}$. Since $\forall i, \lambda_{i} \in \bb{0,1}$, therefore Eq~\ref{eq:local_convergence_radius} holds for all $\bbeta_{i}$ as well.
Squaring and adding Eq~\ref{eq:recursion_i} for $i \in [d]$ and taking a square-root, we have
\ba{
    \|\balpha_{t+1}-\balpha^{*}\| &\leq \|\balpha_{t}-\balpha^{*}\|\sqrt{\sum_{i=1}^{k}\|\nabla g_{i}\bb{\bbeta_{i}}\|^{2}} \notag \\ 
    &\leq \|\balpha_{t}-\balpha^{*}\|\sqrt{\sum_{i=1}^{k}\sum_{j=1}^{k}\bb{\frac{\partial g_{i}\bb{\by}}{\partial y_{j}}\Bigg|_{\bbeta_{i}}}^{2}} \label{eq:vector_recursion}
}
Let us consider the expression $G_{ij} := \dfrac{\partial g_{i}\bb{\by}}{\partial y_{j}}\Bigg|_{\bbeta_{i}}, \forall i,j \in [k]$. For the purpose of the subsequent analysis, we define $\theta := \epsilon\left\|\balpha_{t}-\balpha^{*}\right\|$. We divide the analysis into the following cases -
\\ \\
\textbf{Case 1} : $i=1, j=1$ 
\\ \\
Using Lemma~\ref{lemma:grad_g_i}, 
\bas{
    \left|G_{11}\right| &= \left|\frac{q_{1}'\bb{\bb{\bbeta_{1}}_{1}}}{\|Bw\bb{\bbeta_{1}}\|}\bbb{1 - \frac{q_{1}\bb{\bb{\bbeta_{1}}_{1}}\be_{1}^{T}B^{T}B\bq\bb{\bbeta_{1}}}{\|B\bq\bb{\bbeta_{1}}\|^{2}}}\right|
}
From Lemma~\ref{lemma:q_i_q_i'_bound}, 
\bas{
    \left|q_{1}'\bb{\bb{\bbeta_{1}}_{1}}\right| &\leq \left|q_{1}'\bb{\alpha^{*}_{1}}\right| + c_{3}\left|\bb{\bbeta_{1}}_{1} - \alpha^{*}_{1}\right| \\ 
    &= \left|q_{1}'\bb{\alpha^{*}_{1}}\right|\bb{1 + c_{3}\frac{\left|\bb{\bbeta_{1}}_{1} - \alpha^{*}_{1}\right|}{\left|q_{1}'\bb{\alpha^{*}_{1}}\right|}} \\
    &\leq \left|q_{1}'\bb{\alpha^{*}_{1}}\right|\bb{1 + c_{3}\frac{\norm{\balpha_{t}-\balpha^{*}}}{\left|q_{1}'\bb{\alpha^{*}_{1}}\right|}} \\
    &\leq \left|q_{1}'\bb{\alpha^{*}_{1}}\right|\bb{1 + \theta}
}
and,
\bas{
    q_{1}\bb{\bb{\bbeta_{1}}_{1}} &\leq |q_{1}\bb{\alpha^{*}_{1}}| + c_{2}\left|\bb{\bbeta_{1}}_{1} - \alpha^{*}_{1}\right| \\ 
    &= |q_{1}\bb{\alpha^{*}_{1}}|\bb{1 + c_{2}\frac{\left|\bb{\bbeta_{1}}_{1} - \alpha^{*}_{1}\right|}{|q_{1}\bb{\alpha^{*}_{1}}}} \\
    &\leq |q_{1}\bb{\alpha^{*}_{1}}|\bb{1 + c_{2}\frac{\norm{\balpha_{t}-\balpha^{*}}}{|q_{1}\bb{\alpha^{*}_{1}}|}} \\
    &\leq |q_{1}\bb{\alpha^{*}_{1}}|\bb{1 + \theta}
}
From Lemma~\ref{lemma:normBy_bounds}, 
\bas{
    \|B \bq\bb{\bbeta_{1}}\| &\geq \bb{1-\theta}\left|q_{1}\bb{\alpha^{*}_{1}}\right|\left\|B\be_{1}\right\|
}
From Lemma~\ref{lemma:normvy_i_bounds} and $\theta \leq \frac{1}{10}$,
\bas{
    -6\theta \leq 1 - \frac{q_{1}\bb{\bb{\bbeta_{1}}_{1}}\be_{1}^{T}B^{T}Bw\bb{\bbeta_{1}}}{\|Bw\bb{\bbeta_{1}}\|^{2}} \leq 6\theta
}
Therefore,
\bas{
    \left|G_{11}\right| &\leq 6\bb{\frac{1 + \theta}{1 - \theta}}\frac{\left|q_{1}'\bb{\alpha^{*}_{1}}\right|\theta}{\|B\be_{1}\|\left|q_{1}\bb{\alpha^{*}_{1}}\right|} \\ 
    &\leq \frac{7.5\epsilon \left|q_{1}'\bb{\alpha^{*}_{1}}\right|}{\|B\be_{1}\|\left|q_{1}\bb{\alpha^{*}_{1}}\right|}\left\|\balpha_{t}-\balpha^{*}\right\|, \text{ since } \theta \leq \frac{1}{10} 
}
\textbf{Case 2} : $i=1, j \neq 1$ \\ \\ 
Using Lemma~\ref{lemma:grad_g_i},
\bas{
    \left|G_{1j}\right| &= \left|\frac{q'_{j}\bb{\bb{\bbeta_{1}}_{j}}}{\|B\bq\bb{\bbeta_{1}}\|}\frac{q_{1}\bb{\bb{\bbeta_{1}}_{1}}\be_{j}^{T}B^{T}B\bq\bb{\bbeta_{1}}}{\|B\bq\bb{\bbeta_{1}}\|^{2}}\right|
}
From Lemma~\ref{lemma:q_i_q_i'_bound},
\bas{
    \left|q'_{j}\bb{\bb{\bbeta_{1}}_{j}}\right| \leq c_{3}\left|\bb{\bbeta_{1}}_{j} - \alpha^{*}_{j}\right| \leq c_{3}\left|\bb{\balpha_{t}}_{j} - \alpha^{*}_{j}\right|, \text{ since } \alpha^{*}_{j} = 0
}
From Lemma~\ref{lemma:normBy_bounds}, 
\bas{
    \|B\bq\bb{\bbeta_{1}}\| \geq \bb{1-\theta}\left|q_{1}\bb{\alpha^{*}_{1}}\right|\left\|B\be_{1}\right\|
}
From Lemma~\ref{lemma:q_i_q_i'_bound}, 
\bas{
    \left|q_{1}\bb{\bb{\bbeta_{1}}_{1}}\right| &\leq \left|q_{1}\bb{\alpha^{*}_{1}}\right| + c_{2}\left|\bb{\bbeta_{1}}_{1} - \alpha^{*}_{1}\right| \\ 
    &= \left|q_{1}\bb{\alpha^{*}_{1}}\right|\bb{1 + c_{2}\frac{\left|\bb{\bbeta_{1}}_{1} - \alpha^{*}_{1}\right|}{\left|q_{1}\bb{\alpha^{*}_{1}}\right|}} \\ 
    &\leq \left|q_{1}\bb{\alpha^{*}_{1}}\right|\bb{1 + \theta}
}
From Lemma~\ref{lemma:normvy_i_bounds}, 
\bas{
    \left|\frac{\be_{j}^{T}B^{T}B\bq\bb{\bbeta_{1}}}{\|B\bq\bb{\bbeta_{1}}\|^{2}}\right| \leq \frac{\bb{1+5\theta}}{|q_{1}\bb{\alpha^{*}_{1}}|}\frac{\left\|B\be_{j}\right\|}{\|B\be_{1}\|}
}
Therefore, 
\bas{
    \left|G_{1j}\right| \leq c_{3}\bb{\frac{1+\theta}{1-\theta}}\bb{1+5\theta}\frac{\left\|B\be_{j}\right\|}{|q_{1}\bb{\alpha^{*}_{1}}|\|B\be_{1}\|^{2}}\left|\bb{\balpha_{t}}_{j} - \alpha^{*}_{j}\right| \leq 2c_{3}\frac{\left\|B\be_{j}\right\|}{|q_{1}\bb{\alpha^{*}_{1}}|\|B\be_{1}\|^{2}}\left|\bb{\balpha_{t}}_{j} - \alpha^{*}_{j}\right|
}
\textbf{Case 3} : $i \neq 1, j=1$ \\ \\
Using Lemma~\ref{lemma:grad_g_i},
\bas{
    \left|G_{i1}\right| &= \left|\frac{q'_{1}\bb{\bb{\bbeta_{i}}_{1}}}{\|B\bq\bb{\bbeta_{i}}\|}\frac{q_{i}\bb{\bb{\bbeta_{i}}_{i}}\be_{1}^{T}B^{T}B\bq\bb{\bbeta_{i}}}{\|B\bq\bb{\bbeta_{i}}\|^{2}}\right|
}
From Lemma~\ref{lemma:q_i_q_i'_bound},
\bas{
    \left|q'_{1}\bb{\bb{\bbeta_{i}}_{1}}\right| &\leq \left|q_{1}'\bb{\alpha^{*}_{1}}\right| + c_{3}\left|\bb{\bbeta_{i}}_{1} - \alpha^{*}_{1}\right| \\ 
    &= \left|q_{1}'\bb{\alpha^{*}_{1}}\right|\bb{1 + c_{3}\frac{\left|\bb{\bbeta_{i}}_{1} - \alpha^{*}_{1}\right|}{\left|q_{1}'\bb{\alpha^{*}_{1}}\right|}} \\ 
    &\leq \left|q_{1}'\bb{\alpha^{*}_{1}}\right|\bb{1 + \theta}
}
From Lemma~\ref{lemma:normBy_bounds}, 
\bas{
    \|B\bq\bb{\bbeta_{i}}\| \geq \bb{1-\theta}\left|q_{1}\bb{\alpha^{*}_{1}}\right|\left\|B\be_{1}\right\|
}
From Lemma~\ref{lemma:q_i_q_i'_bound}, 
\bas{
    \left|q_{i}\bb{\bb{\bbeta_{i}}_{i}}\right| &\leq \frac{c_{3}}{2}\bb{\bb{\bbeta_{i}}_{i} - \alpha^{*}_{i}}^{2} \leq \frac{c_{3}}{2}\bb{\bb{\balpha_{t}}_{i} - \alpha^{*}_{i}}^{2}
}
From Lemma~\ref{lemma:normvy_i_bounds}, 
\bas{
    \left|\frac{\be_{1}^{T}B^{T}B\bq\bb{\bbeta_{i}}}{\|B\bq\bb{\bbeta_{i}}\|^{2}}\right| \leq \frac{1+5\theta}{\left|q_{1}\bb{\alpha^{*}_{1}}\right|}
}
Therefore, 
\bas{
    \left|G_{i1}\right| &\leq \frac{c_{3}}{2}\bb{\frac{\bb{1+\theta}\bb{1+5\theta}}{1-\theta}}\frac{\left|q_{1}'\bb{\alpha^{*}_{1}}\right|}{\left|q_{1}\bb{\alpha^{*}_{1}}\right|^{2}\left\|B\be_{1}\right\|}\bb{\bb{\balpha_{t}}_{i} - \alpha^{*}_{i}}^{2} \\
    &\leq c_{3}\frac{\left|q_{1}'\bb{\alpha^{*}_{1}}\right|}{\left|q_{1}\bb{\alpha^{*}_{1}}\right|^{2}\left\|B\be_{1}\right\|}\bb{\bb{\balpha_{t}}_{i} - \alpha^{*}_{i}}^{2} \\
    &\leq c_{3}R\frac{\left|q_{1}'\bb{\alpha^{*}_{1}}\right|}{\left|q_{1}\bb{\alpha^{*}_{1}}\right|^{2}\left\|B\be_{1}\right\|}\left|\bb{\balpha_{t}}_{i} - \alpha^{*}_{i}\right| \\
    &\leq c_{2}\frac{\left|q_{1}'\bb{\alpha^{*}_{1}}\right|}{\left|q_{1}\bb{\alpha^{*}_{1}}\right|^{2}\left\|B\be_{1}\right\|}\left|\bb{\balpha_{t}}_{i} - \alpha^{*}_{i}\right|
}
\textbf{Case 4} : $i \neq 1, j \neq 1, i = j$ \\ \\
Using Lemma~\ref{lemma:grad_g_i},
\bas{
    \left|G_{ii}\right| &= \left|\frac{q_{i}'\bb{\bb{\bbeta_{i}}_{i}}}{\|B\bq\bb{\bbeta_{i}}\|}\bbb{1 - \frac{q_{i}\bb{\bb{\bbeta_{i}}_{i}}\be_{i}^{T}B^{T}B\bq\bb{\bbeta_{i}}}{\|B\bq\bb{\bbeta_{i}}\|^{2}}}\right|
}
From Lemma~\ref{lemma:q_i_q_i'_bound},
\bas{
    \left|q'_{i}\bb{\bb{\bbeta_{i}}_{i}}\right| &\leq c_{3}\left|\bb{\bbeta_{i}}_{i} - \alpha^{*}_{j}\right| \leq c_{3}\left|\bb{\balpha_{t}}_{i} - \alpha^{*}_{i}\right|
}
From Lemma~\ref{lemma:normBy_bounds}, 
\bas{
    \|B\bq\bb{\bbeta_{i}}\| \geq \bb{1-\theta}\left|q_{1}\bb{\alpha^{*}_{1}}\right|\left\|B\be_{1}\right\|
}
From Lemma~\ref{lemma:q_i_q_i'_bound}, 
\bas{
    \left|q_{i}\bb{\bb{\bbeta_{i}}_{i}}\right| &\leq \frac{c_{3}}{2}\bb{\bb{\bbeta_{i}}_{i} - \alpha^{*}_{i}}^{2} \leq \frac{c_{3}}{2}\bb{\bb{\balpha_{t}}_{i} - \alpha^{*}_{i}}^{2} \leq c_{3}R\norm{\balpha_{t}-\balpha^{*}} \leq c_{2}\norm{\balpha_{t}-\balpha^{*}}
}
From Lemma~\ref{lemma:normvy_i_bounds}, 
\bas{
    \left|\frac{\be_{i}^{T}B^{T}B\bq\bb{\bbeta_{i}}}{\|B\bq\bb{\bbeta_{i}}\|^{2}}\right| \leq \bb{1+5\theta}\frac{\left\|B\be_{i}\right\|}{|q_{1}\bb{\alpha^{*}_{1}}|\|B\be_{1}\|}
}
Then, 
\bas{
    \frac{q_{i}\bb{\bb{\bbeta_{i}}_{i}}\be_{i}^{T}B^{T}B\bq\bb{\bbeta_{i}}}{\|B\bq\bb{\bbeta_{i}}\|^{2}} &\leq \bb{1+5\theta}\norm{\balpha_{t}-\balpha^{*}}\frac{\left\|B\be_{i}\right\|}{\|B\be_{1}\|}\frac{c_{2}}{|q_{1}\bb{\alpha^{*}_{1}}|} \\
    &\leq \bb{1+5\theta}\norm{\balpha_{t}-\balpha^{*}}\frac{\left\|B\right\|_{F}}{\|B\be_{1}\|}\frac{c_{2}}{|q_{1}\bb{\alpha^{*}_{1}}|} \\
    &\leq \bb{1+5\theta}\epsilon\norm{\balpha_{t}-\balpha^{*}} \\
    &\leq 2\theta
}
Therefore, 
\bas{
    \left|G_{ii}\right| \leq \frac{c_{3}\left|\bb{\balpha_{t}}_{i} - \alpha^{*}_{i}\right|}{\bb{1-\theta}\left|q_{1}\bb{\alpha^{*}_{1}}\right|\left\|B\be_{1}\right\|} \leq \frac{2c_{3}}{\left|q_{1}\bb{\alpha^{*}_{1}}\right|\left\|B\be_{1}\right\|}\left|\bb{\balpha_{t}}_{i} - \alpha^{*}_{i}\right|
}
\textbf{Case 5} : $i \neq 1, j \neq 1, i \neq j$ \\ \\ 
Using Lemma~\ref{lemma:grad_g_i},
\bas{
    \left|G_{ij}\right| = \left|\frac{q'_{j}\bb{\bb{\bbeta_{i}}_{j}}}{\|B\bq\bb{\bb{\bbeta_{i}}}\|}\frac{q_{i}\bb{\bb{\bbeta_{i}}_{i}}\be_{j}^{T}B^{T}B\bq\bb{\bb{\bbeta_{i}}}}{\|B\bq\bb{\bb{\bbeta_{i}}}\|^{2}}\right|
}
From Lemma~\ref{lemma:q_i_q_i'_bound},
\bas{
    \left|q'_{j}\bb{\bb{\bbeta_{i}}_{j}}\right| &\leq c_{3}\left|\bb{\bbeta_{i}}_{j} - \alpha^{*}_{j}\right| \\ &\leq c_{3}\left|\bb{\balpha_{t}}_{j} - \alpha^{*}_{j}\right|
}
From Lemma~\ref{lemma:normBy_bounds}, 
\bas{
    \|B\bq\bb{\bbeta_{i}}\| \geq \bb{1-\theta}\left|q_{1}\bb{\alpha^{*}_{1}}\right|\left\|B\be_{1}\right\|
}
From Lemma~\ref{lemma:q_i_q_i'_bound}, 
\bas{
    \left|q_{i}\bb{\bb{\bbeta_{i}}_{i}}\right| &\leq \frac{c_{3}}{2}\bb{\bb{\bbeta_{i}}_{i} - \alpha^{*}_{i}}^{2} \leq \frac{c_{3}}{2}\bb{\bb{\balpha_{t}}_{i} - \alpha^{*}_{i}}^{2} \leq \frac{c_{3}R}{2}|\bb{\balpha_{t}}_{i} - \alpha^{*}_{i}|
}
From Lemma~\ref{lemma:normvy_i_bounds}, 
\bas{
    \left|\frac{\be_{j}^{T}B^{T}B\bq\bb{\bbeta_{i}}}{\|B\bq\bb{\bbeta_{i}}\|^{2}}\right| \leq \frac{\bb{1+5\theta}}{|q_{1}\bb{\alpha^{*}_{1}}|}\frac{\left\|B\be_{j}\right\|}{\|B\be_{1}\|}
}
Therefore, 
\bas{
    \left|G_{ij}\right| \leq \frac{c_{3}^{2}R}{2}\bb{\frac{1+5\theta}{1-\theta}}\frac{\left\|B\be_{j}\right\|}{\left|q_{1}\bb{\alpha^{*}_{1}}\right|^{2}\|B\be_{1}\|^{2}}\left|\bb{\balpha_{t}}_{j} - \alpha^{*}_{j}\right| \left|\bb{\balpha_{t}}_{i} - \alpha^{*}_{i}\right| \leq \frac{c_{3}^{2}R\left\|B\be_{j}\right\|}{\left|q_{1}\bb{\alpha^{*}_{1}}\right|^{2}\|B\be_{1}\|^{2}}\left|\bb{\balpha_{t}}_{j} - \alpha^{*}_{j}\right| \left|\bb{\balpha_{t}}_{i} - \alpha^{*}_{i}\right|
}
\subsubsection{Putting everything together}
Putting all the cases together in Eq~\ref{eq:vector_recursion}, we have $\frac{\left\|\balpha_{t+1}-\balpha^{*}\right\|}{\left\|\balpha_{t}-\balpha^{*}\right\|} \leq C\left\|\balpha_{t}-\balpha^{*}\right\|$, where 
\[
C := \sqrt{
    \begin{aligned}
        &\bb{\frac{7.5\epsilon \left|q_{1}'\bb{\alpha^{*}_{1}}\right|}{\|B\be_{1}\|\left|q_{1}\bb{\alpha^{*}_{1}}\right|}}^{2} + 
        \bb{\frac{2c_{3}\left\|B\right\|_{F}}{|q_{1}\bb{\alpha^{*}_{1}}|\|B\be_{1}\|^{2}}}^{2} + \bb{\frac{c_{2}\left|q_{1}'\bb{\alpha^{*}_{1}}\right|}{\left|q_{1}\bb{\alpha^{*}_{1}}\right|^{2}\left\|B\be_{1}\right\|}}^{2} + \bb{\frac{2c_{3}}{\left|q_{1}\bb{\alpha^{*}_{1}}\right|\left\|B\be_{1}\right\|}}^{2} \\ 
        & + \bb{\frac{c_{3}^{2}R^{2}\left\|B\right\|_{F}}{\left|q_{1}\bb{\alpha^{*}_{1}}\right|^{2}\|B\be_{1}\|^{2}}}^{2}
    \end{aligned}
}
\]
Then we have, $C \leq \frac{5\norm{B}_{F}}{\norm{B\mathbf{e}_{1}}^{2}}\max\left\{\frac{c_{3}}{\left|q_{1}\bb{\alpha^{*}_{1}}\right|}, \frac{c_{2}^{2}}{\left|q_{1}\bb{\alpha^{*}_{1}}\right|^{2}}\right\}$.
For linear convergence, we require $CR < 1$, which is ensured by the condition mentioned in Theorem~\ref{theorem:local_convergence}.

\section{Additional experiments for noisy ICA}
\label{section:ica_additional_experiments}

\subsection{Experiments with super-gaussian source signals}
\label{subsection:supergaussian}
Table~\ref{tab:super} shows the Amari error in the presence of super-Gaussian signals. The signals are 1) a Uniform distribution $U(-\sqrt{3}, \sqrt{3})$, 2) Bernoulli$\left(\frac{1}{2} + \frac{1}{\sqrt{12}}\right)$, 3) Laplace(0, 0.05) (mean 0 and standard deviation 0.05), 4) Exponential(5), and 5) and 6) a Student's $t$-distribution with 3 and 5 degrees of freedom, respectively. The Meta algorithm closely follows the best candidate algorithm even in the presence of many super-Gaussian signals.
\begin{table}[h!]
    \centering
    \caption{\label{tab:super}Variation of Amari error with Sample Size for Heavy-Tailed distributions, averaged over 100 random runs. Noise power $\rho = 0.001$, number of sources $k=6$. }
    \begin{tabular}{|l|ccccc|}
        \toprule
        \diagbox{\textbf{Algorithm}}{$n$} & \textbf{200} & \textbf{500} & \textbf{1000} & \textbf{5000} & \textbf{10000} \\
        \midrule
        Meta &\rd {\bf  0.44376} & \rd{\bf 0.25215} & \rd{\bf 0.20222} & \rd{\bf 0.11435} & \rd{\bf 0.0838}  \\
        CHF & 1.10103 & 0.70823 & 0.52529 & 0.21344 & {\bf 0.09194} \\
        CGF & 1.84266 & 1.51216 & 1.3702  & 0.84351 & 0.58753 \\
        PEGI & 2.27873 & 1.86561 & 1.71474 & 1.33709 & 1.23322 \\
        PFICA & {\bf 0.39237} & {\bf 0.25468} & {\bf 0.21222} & {\bf 0.12878} & 0.12347 \\
        JADE & 0.70174 & 0.39246 & 0.28199 & 0.12652 & 0.09686 \\
        FastICA & 0.66441 & 0.3869  & 0.28419 & 0.13215 & 0.09946 \\
        \bottomrule
    \end{tabular}
\end{table}
\subsection{Image demixing experiments}
\label{section:additional_image_demixing_experiments}

In this section, we provide additional experiments for image-demixing using ICA. We mix images using flattening and linearly mixing them with a $4 \times 4$ matrix $B$ (i.i.d entries $\sim \mathcal{N}(0,1)$) and Wishart noise ($\rho = 0.001$). Demixing is performed using the SINR-optimal demixing matrix (see Section~\ref{sec:background}) and the results are shown in Figure~\ref{figure:image_demixing_experiment_appendix} along with their corresponding independence scores. The CHF-based method recovers the original sources well, upto sign. The Kurtosis and CGF-based method fails to recover the second source. This is consistent with their higher independence score. The Meta algorithm selects CHF from candidates CHF, CGF, Kurtosis, FastICA, and JADE.


\begin{figure}
    \centering
        \begin{subfigure}[b]{\textwidth}
        \centering
        \begin{tabular}{cccc}
            \includegraphics[width=0.15\textwidth]{demixing_exp/Original_Images/source1.png} &
            \includegraphics[width=0.15\textwidth]{demixing_exp/Original_Images/source2} &
            \includegraphics[width=0.15\textwidth]{demixing_exp/Original_Images/source3} &
            \includegraphics[width=0.15\textwidth]{demixing_exp/Original_Images/source4} \\
        \end{tabular}
        \caption{Source Images}
        \vspace{3pt}
    \end{subfigure}
    \begin{subfigure}[b]{\textwidth}
        \centering
        \begin{tabular}{cccc}
            \includegraphics[width=0.15\textwidth]{demixing_exp/Mixed_Images/mixed_image_1.png} &
            \includegraphics[width=0.15\textwidth]{demixing_exp/Mixed_Images/mixed_image_2} &
            \includegraphics[width=0.15\textwidth]{demixing_exp/Mixed_Images/mixed_image_3} &
            \includegraphics[width=0.15\textwidth]{demixing_exp/Mixed_Images/mixed_image_4} \\
        \end{tabular}
        \caption{Mixed Images}
        \vspace{3pt}
    \end{subfigure}
        \begin{subfigure}[b]{\textwidth}
        \centering
        \begin{tabular}{cccc}
            \includegraphics[width=0.15\textwidth]{demixing_exp/CHF/source1_chf.png} &
            \includegraphics[width=0.15\textwidth]{demixing_exp/CHF/source2_chf} &
            \includegraphics[width=0.15\textwidth]{demixing_exp/CHF/source3_chf} &
            \includegraphics[width=0.15\textwidth]{demixing_exp/CHF/source4_chf} \\
        \end{tabular}
        \caption{Demixed Images using CHF-based contrast function ($\Delta(\bt,F|P) = 5.26 \times 10^{-3}$).}
        \vspace{3pt}
    \end{subfigure}
        \begin{subfigure}[b]{\textwidth}
            \centering
            \begin{tabular}{cccc}
                \includegraphics[width=0.15\textwidth]{demixing_exp/Kurtosis/source1_kurtosis.png} &
                \includegraphics[width=0.15\textwidth]{demixing_exp/Kurtosis/source2_kurtosis} &
                \includegraphics[width=0.15\textwidth]{demixing_exp/Kurtosis/source3_kurtosis} &
                \includegraphics[width=0.15\textwidth]{demixing_exp/Kurtosis/source4_kurtosis} \\
            \end{tabular}
            \caption{Demixed Images using Kurtosis-based contrast function ($\Delta(\bt,F|P) = 2.48 \times 10^{-2}$)}
            \vspace{3pt}
        \end{subfigure}
        \begin{subfigure}[b]{\textwidth}
            \centering
            \begin{tabular}{cccc}
                \includegraphics[width=0.15\textwidth]{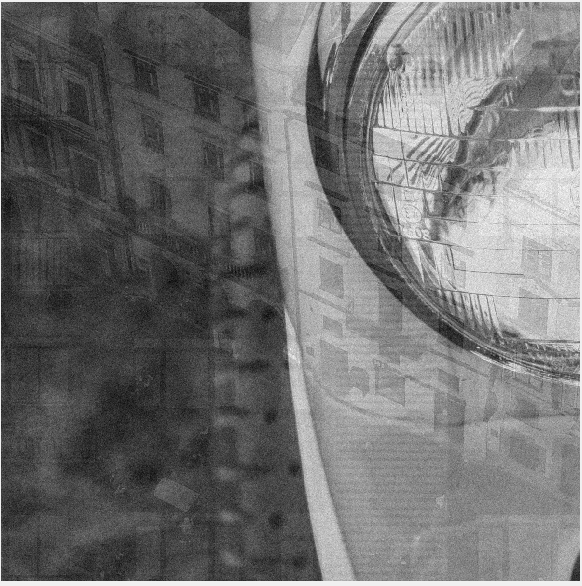} &
                \includegraphics[width=0.15\textwidth]{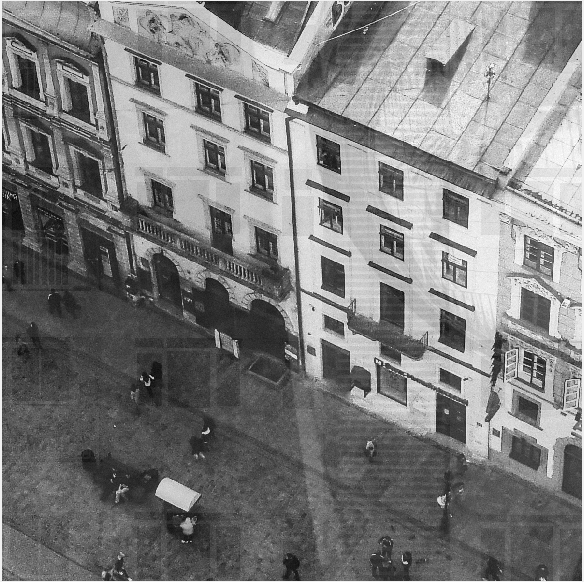} &
                \includegraphics[width=0.15\textwidth]{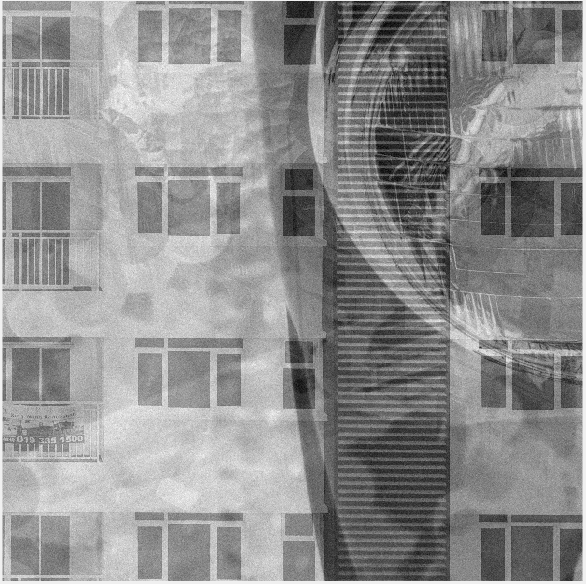} &
                \includegraphics[width=0.15\textwidth]{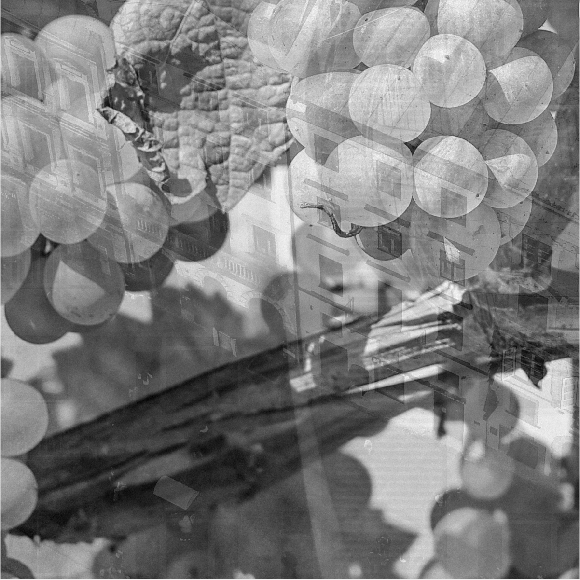} \\
            \end{tabular}
            \caption{Demixed Images using CGF-based contrast function ($\Delta(\bt,F|P) = 4 \times 10^{-2}$).}
            \vspace{3pt}
        \end{subfigure}
        \begin{subfigure}[b]{\textwidth}
        \centering
        \begin{tabular}{cccc}
            \includegraphics[width=0.15\textwidth]{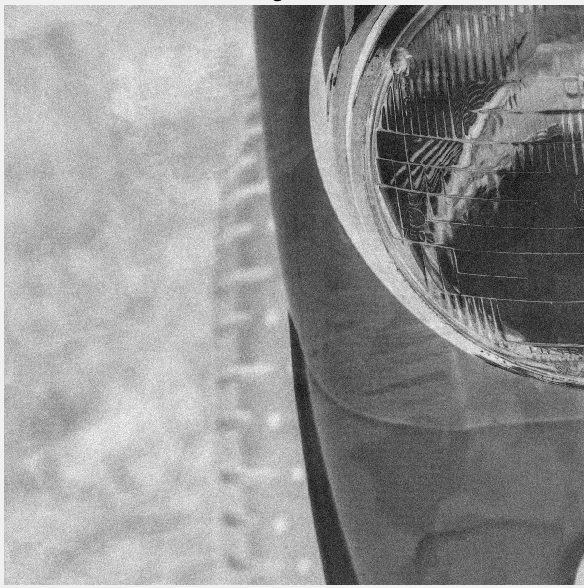} &
            \includegraphics[width=0.15\textwidth]{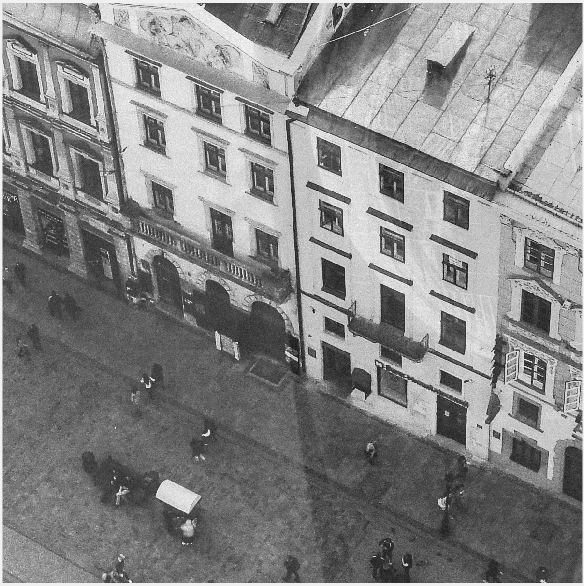} &
            \includegraphics[width=0.15\textwidth]{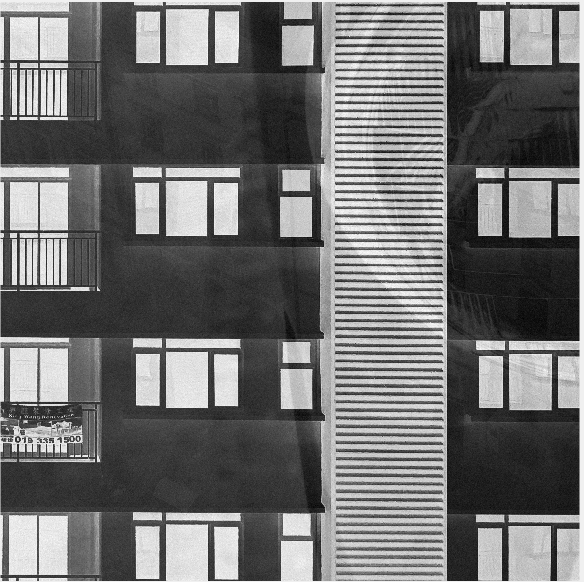} &
            \includegraphics[width=0.15\textwidth]{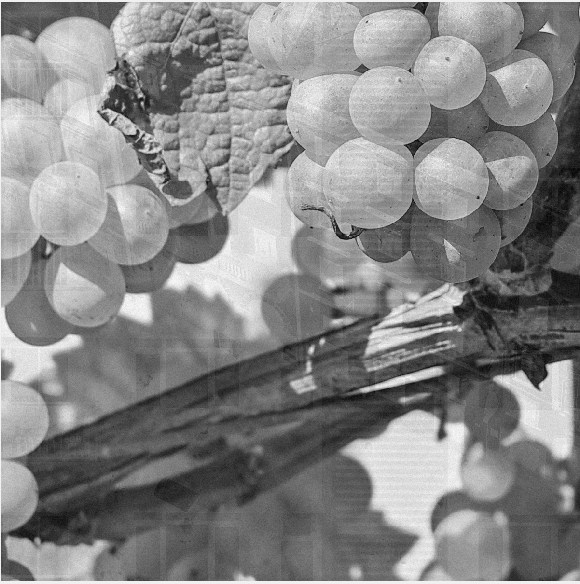} \\
        \end{tabular}
        \caption{Demixed Images using FastICA ($\Delta(\bt,F|P) = 7.1 \times 10^{-3}$).}
        \vspace{3pt}
    \end{subfigure}
    \begin{subfigure}[b]{\textwidth}
        \centering
        \begin{tabular}{cccc}
            \includegraphics[width=0.15\textwidth]{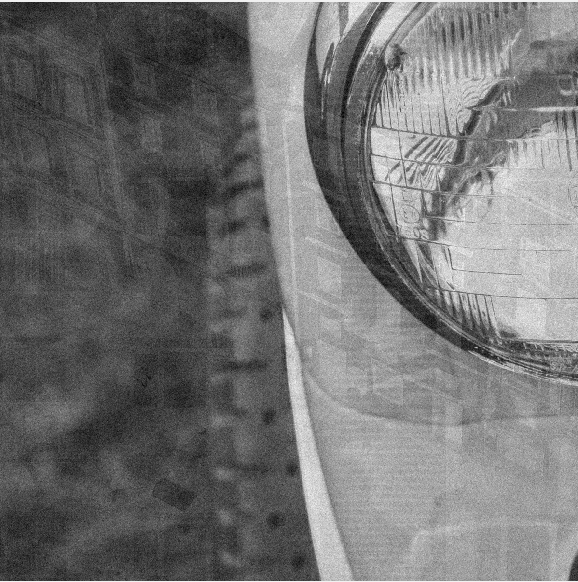} &
            \includegraphics[width=0.15\textwidth]{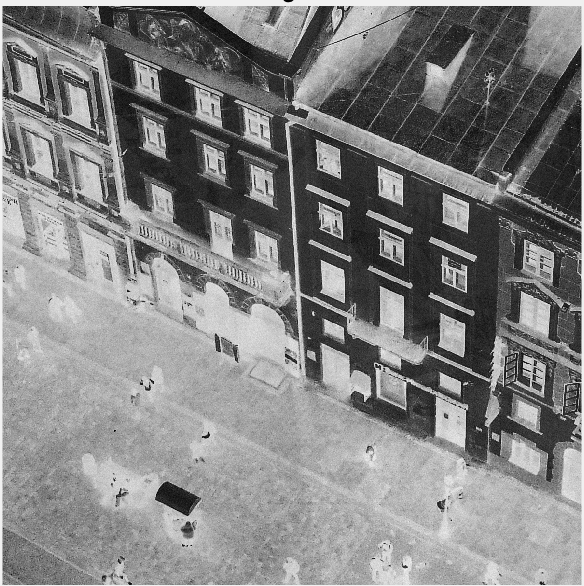} &
            \includegraphics[width=0.15\textwidth]{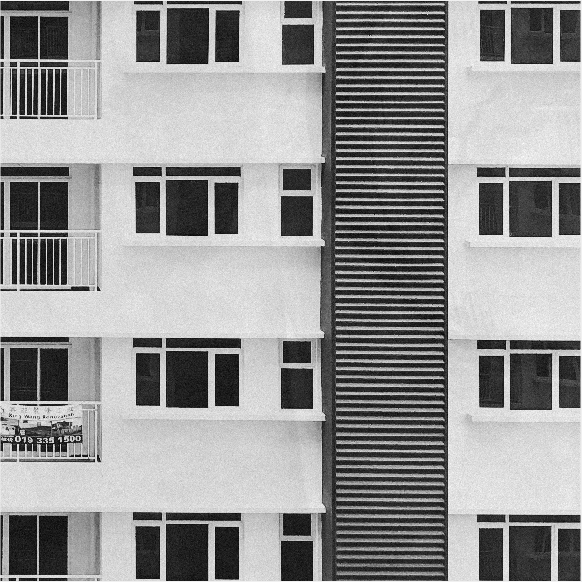} &
            \includegraphics[width=0.15\textwidth]{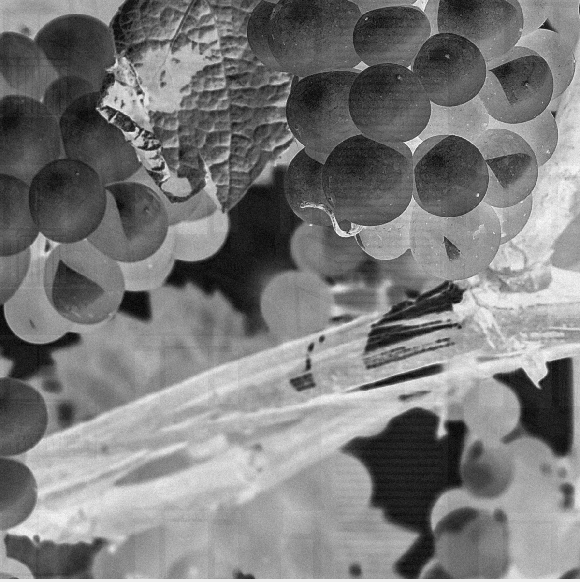} \\
        \end{tabular}
        \caption{Demixed Images using JADE ($\Delta(\bt,F|P) = 6.8 \times 10^{-3}$).}
    \end{subfigure}
    \vspace{1pt}    \caption{\label{figure:image_demixing_experiment_appendix} Image-Demixing using ICA}
\end{figure}

\subsection{Image denoising experiments}
\label{section:image_denoising_experiments}

In this experiment, we use the ICA-based denoising technique proposed in \cite{oja1999image} to compare candidate Noisy ICA algorithms and show that the Meta algorithm can pick the best-denoised image based on the independence score proposed in our work.
 
We use the noisy MNIST dataset and further add entrywise Gaussian noise with variance proportional to $\sin^2(c_1\pi(i+j)) (c_1 = 50)$ to the $(i,j)^{\text{th}}$ pixel. Training images are flattened to create a 784-dimensional vector, and PCA is performed to reduce dimensionality to 25, on which subsequently ICA is performed. The original and denoised images along with their independence score are shown in Figure~\ref{fig:image_denoising_ica}. We note that CHF-based denoising provides qualitatively better results, while CGF-based denoising provides the worst results. This is consistent with their corresponding independence scores.

\begin{figure}
    \centering
    \begin{subfigure}[b]{\textwidth}
    \includegraphics[width=\textwidth]{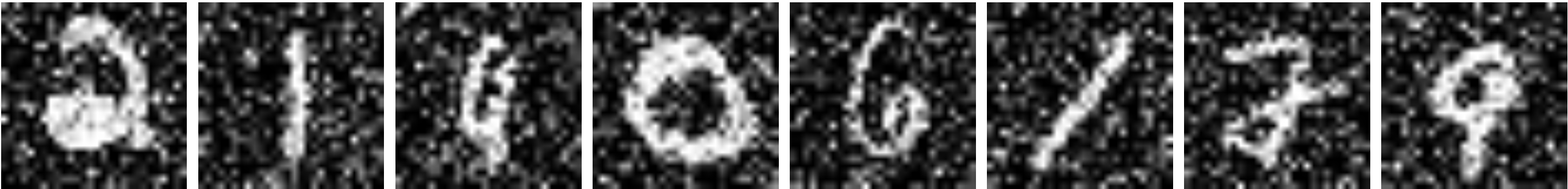}
    \caption{Original Images}
    \vspace{5pt}
    \end{subfigure}
    \begin{subfigure}[b]{\textwidth}
    \includegraphics[width=\textwidth]{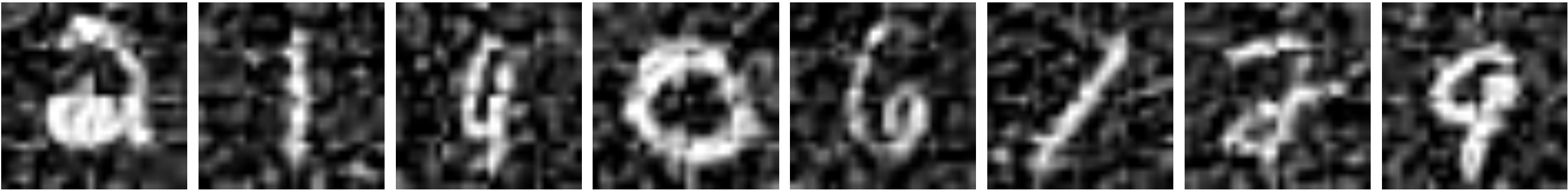}
    \caption{CHF-based denoising ($\Delta(\bt,F|P) = 4.4 \times 10^{-6}$) }
    \vspace{5pt}
    \end{subfigure}
    \begin{subfigure}[b]{\textwidth}
    \includegraphics[width=\textwidth]{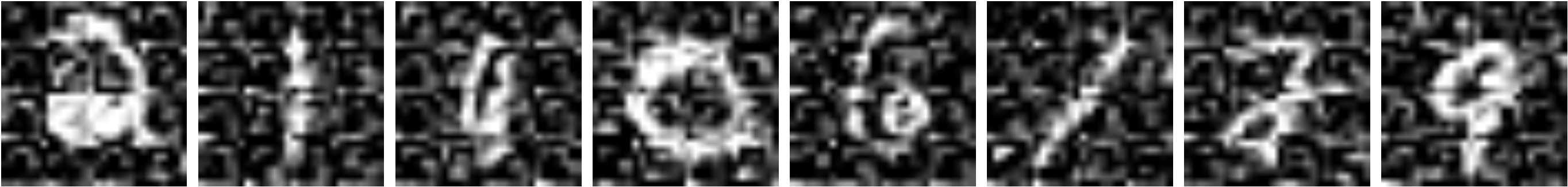}
    \caption{Kurtosis-based denoising ($\Delta(\bt,F|P) = 1.7 \times 10^{-4}$) }
    \vspace{5pt}
    \end{subfigure}
    \begin{subfigure}[b]{\textwidth}
    \includegraphics[width=\textwidth]{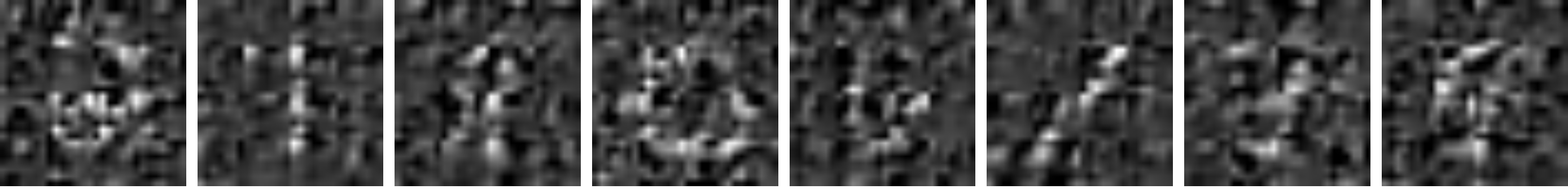}
    \caption{CGF-based denoising ($\Delta(\bt,F|P) = 9.2 \times 10^{-6}$) }
    \vspace{5pt}
    \end{subfigure}
    \begin{subfigure}[b]{\textwidth}
    \includegraphics[width=\textwidth]{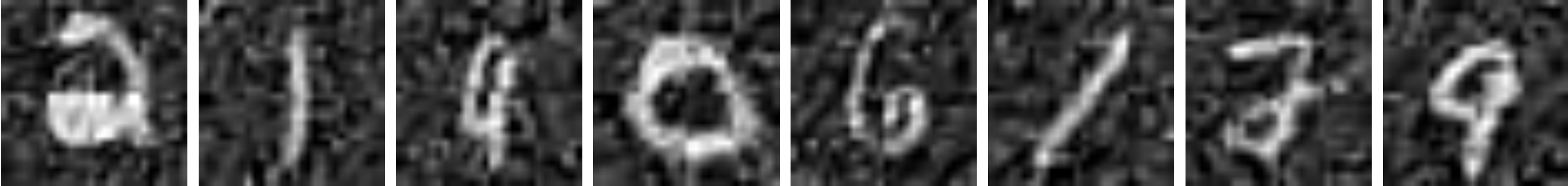}
    \caption{FastICA-based denoising ($\Delta(\bt,F|P) = 5.9 \times 10^{-6}$) }
    \vspace{5pt}
    \end{subfigure}
    \begin{subfigure}[b]{\textwidth}
    \includegraphics[width=\textwidth]{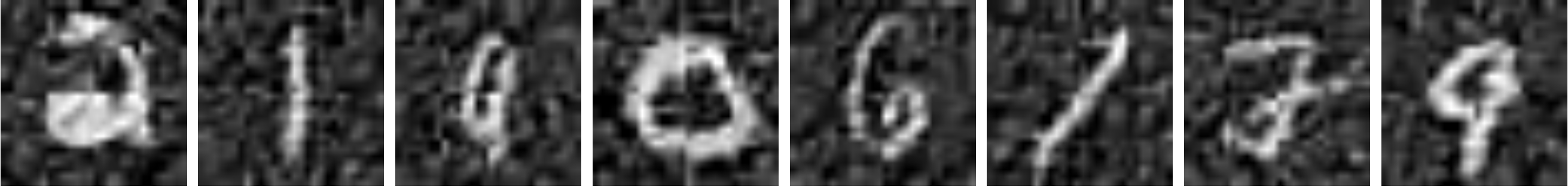}
    \caption{JADE-based denoising ($\Delta(\bt,F|P) = 4.6 \times 10^{-6}$) }
    \vspace{5pt}
    \end{subfigure}
    \caption{\label{fig:image_denoising_ica} Image Denoising using ICA}
\end{figure}

\section{Algorithm for sequential calculation of independence scores}
\label{section:sequential_algorithm}
In this section, we provide an algorithm for the computation of the independence score proposed in the manuscript, but in a sequential manner. The detailed algorithm is provided as Algorithm \ref{alg:independence_column_wise}. We assume that we have access to a matrix of the form $C=BDB^T$. 
\\ \\
The power method (see Eq~\ref{eq:power}) essentially extracts one column of $B$ (up to scaling) at every step.~\cite{NIPS2015_89f03f7d} provides an elegant way to use the pseudo-Euclidean space to successively project the data onto columns orthogonal to the ones extracted. This enables one to extract each column of the mixing matrix. For completeness, we present the steps of this projection.
 After extracting the $i^{th}$ column $\bu^{(i)}$, the algorithm maintains two matrices. The first, denoted by $U$, estimates the mixing matrix $B$ one column at a time (up to scaling). The second, denoted by $V$ estimates $B^{-1}$ \textit{one row} at a time.
It is possible to extend the independence score in Eq~\ref{eq:score} to the sequential setting as follows. 
Let $\bu^{(j)}$ be the $j^{th}$ vector extracted by the power method (Eq~\ref{eq:power}) until convergence. 
After extracting $\ell$ columns, we project the data using $\min(\ell+1,k)$ projection matrices. These would be mutually independent if we indeed extracted different columns of $B$.  Let $C=BDB^T$ for some diagonal matrix $D$. For convenience of notation, denote $B_{i} \equiv B(:,i)$. Set -
\ba{\label{eq:projectionfirst}
\bv^{(j)}=\frac{C^{\dag}\bu^{(j)}}{{\bu^{(j)}}^T C^{\dag}\bu^{(j)}}
}
When $\bu^{(j)}=B_j/\|B_j\|$, 
$\begin{aligned}
\bv^{(j)}=\frac{(B^T)^{-1}D^{\dag}\be_j}{\be_j^TD^{\dag}\be_j}\|B_j\|=(B^T)^{-1}\be_j\|B_j\|.
\end{aligned}$
Let $\bx$ denote an arbitrary datapoint.
Thus 
\ba{\label{eq:proj}
U(:,j)V(j,:)\bx=B_jz_j+U(:,j)V(j,:)\bg
}
Thus the projection on all other columns $j>\ell$ is given by:
\bas{
(I-UV)\bx
=\sum_{i=1}^k B_i z_i-\sum_{j=1}^\ell B_j z_j+ \tilde{\bg}=\sum_{j=\ell+1}^k B_j z_j+\tilde{\bg}
}
where $\tilde{\bg}=F\bg$, where $F$ is some $k\times k$ matrix.
So we have $\min(\ell,k)$ vectors of the form $z_i B_i +\tilde{\bg}_i$, and when $\ell<k$ an additional vector which contains all independent random variables $z_j$, $j>\ell$ along with a mean-zero Gaussian vector. Then we can project each vector to a scalar using unit random vectors and check for independence using the Independence Score defined in~\ref{eq:score}. When $\ell=k$, then all we need are the $j=1,\dots,k$ projections on the $k$ directions identified via ICA in Eq~\ref{eq:proj}.
\begin{figure}
    \centering
    \includegraphics[width=0.5\textwidth]{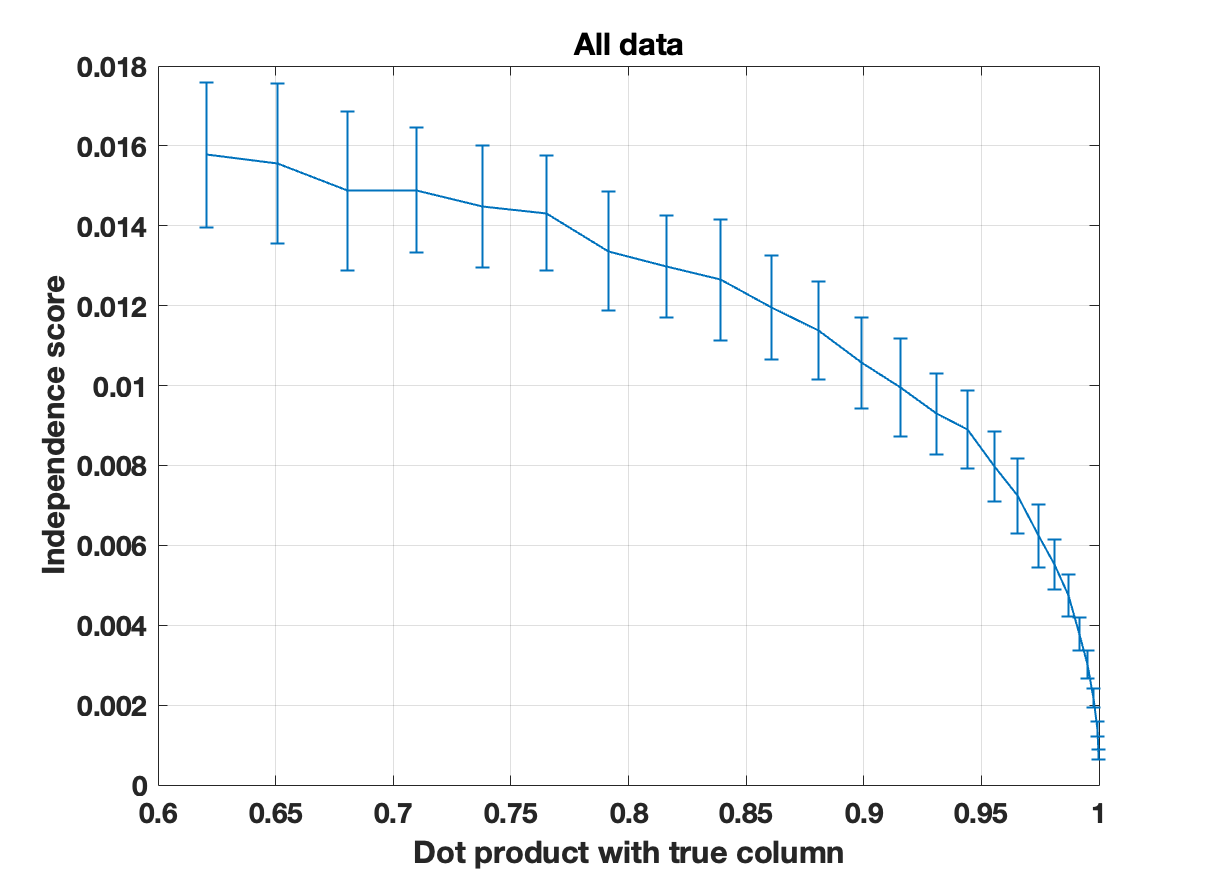}
    \caption{Mean independence score with errorbars from 50 random runs}
         \label{fig:iscore_all_exp}
    \label{fig:enter-label1}
    \vspace{20pt}
\end{figure}
\\ \\
As an example, we conduct an experiment (Figure \ref{fig:iscore_all_exp}), where we fix a mixing matrix $B$ using the same generating mechanism in Section~\ref{section:experiments}. Now we create vectors $\mathbf{q}$ which interpolate between $B(:,1)$ and a fixed arbitrary vector orthogonal to $B(:,1)$. As the interpolation changes, we plot the independence score for direction $\mathbf{q}$. The dataset is the 9-dimensional dataset, which has independent components from many different distributions (see Section~\ref{section:experiments}).

This plot clearly shows that there is a clear negative correlation between the score and the dot product of a vector with the column $B(:,1)$. To be concrete, when $\mathbf{q}$ has a small angle with $B(:,1)$, the independence score is small, and the error bars are also very small. However, as the dot product decreases, the score grows and the error bars become larger.

\begin{algorithm}[!hbt]
\caption{\label{alg:independence_column_wise}Independence Score after extracting $\ell$ columns. $U$ and $V$ are running estimates of $B$ and $B^{-1}$ upto $\ell$ columns and rows respectively.} \label{alg:oja}
\begin{algorithmic}
\STATE \textbf{Input}
\bindent 
\STATE $X\in \mathbb{R}^{n\times k}$, $U\in\mathbb{R}^{k\times \ell}$, $V\in\mathbb{R}^{\ell\times k}$, Number of random projections $M$
\eindent
\STATE $k_0\leftarrow \min(\ell+1,k)$
 \FOR{$j$ in range[1, $\ell$]}
 \STATE $Y_j \leftarrow X\bb{U(:,j)V(j,:)}^T$
 \ENDFOR
\IF{$\ell<k$}
\STATE $Y_{\ell+1}\leftarrow X\bb{I-V^TU^T}$
\ENDIF
\FOR{$j$ in range[1, $M$]}
\STATE $\bt \leftarrow$ random unit vector in $\mathbb{R}^k$
\FOR{$a$ in range[1, $k_0$]}
\STATE $W(:,j)\leftarrow Y_j \bt$
\ENDFOR
\STATE Let $\balpha\in \mathbb{R}^{1,k_0}$ represent a row of $W$
\STATE $\tilde{S} \leftarrow \cov(W)$
\STATE $\gamma\leftarrow \sum_{i,j}\tilde{S}_{ij}$
\STATE $\bbeta\leftarrow \sum_{i}\tilde{S}_{ii}$
\STATE $s(j) = |\hat{\E}\exp(\sum_j i\alpha_j)\exp(-\bbeta)-\prod_{j=1}^{k_0}\hat{\E}\exp(i\alpha_j)\exp(-\gamma)|$
\ENDFOR
\STATE Return $\text{mean}(s),\text{stdev}(s)$
\end{algorithmic}
\end{algorithm}
We refer to this function as $\Delta\bb{X,U,V,\ell}$ which takes as input, the data $X$, the number of columns $\ell$ and matrices $U$, $V$ which are running estimates of $B$ and $B^{-1}$ upto $\ell$ columns and rows respectively.


\section{More details for third derivative condition in theorem~\ref{theorem:global_convergence}}
\label{section:Assumption_1d}
Theorem~\ref{theorem:global_convergence} requires the condition ``The third derivative of $h_X(u)$ does not change the sign in the half line $[0,\infty)$ for the non-Gaussian random variable considered in the ICA problem.". In this section, we provide sufficient conditions with respect to contrast functions and datasets where this holds. Further, we also provide an interesting example to demonstrate that our Assumption \ref{assump:third}-(d) might not be too far from being necessary.

\textbf{CGF-based contrast function.} Consider the cumulant generating function of a random variable $X$, i.e. the logarithm of the moment generating function. Now consider the contrast function 
\bas{g_X(t)=CGF(t)-\var(X)t^2/2.}  
 We first note that it satisfies Assumption~\ref{assump:third}(a)-(d). Next, we observe that it is enough for a distribution to have all cumulants of the same sign to satisfy Assumption 1(d) for the CGF. For example, the Poisson distribution has all positive cumulants. Figure \ref{fig:cgf_examples} depicts the third derivative of the contrast function for a Bernoulli(1/2), Uniform, Poisson, and Exponential. 
\begin{figure}
    \centering
    \begin{subfigure}[b]{0.48\textwidth}
        \centering
        \includegraphics[width=\textwidth]{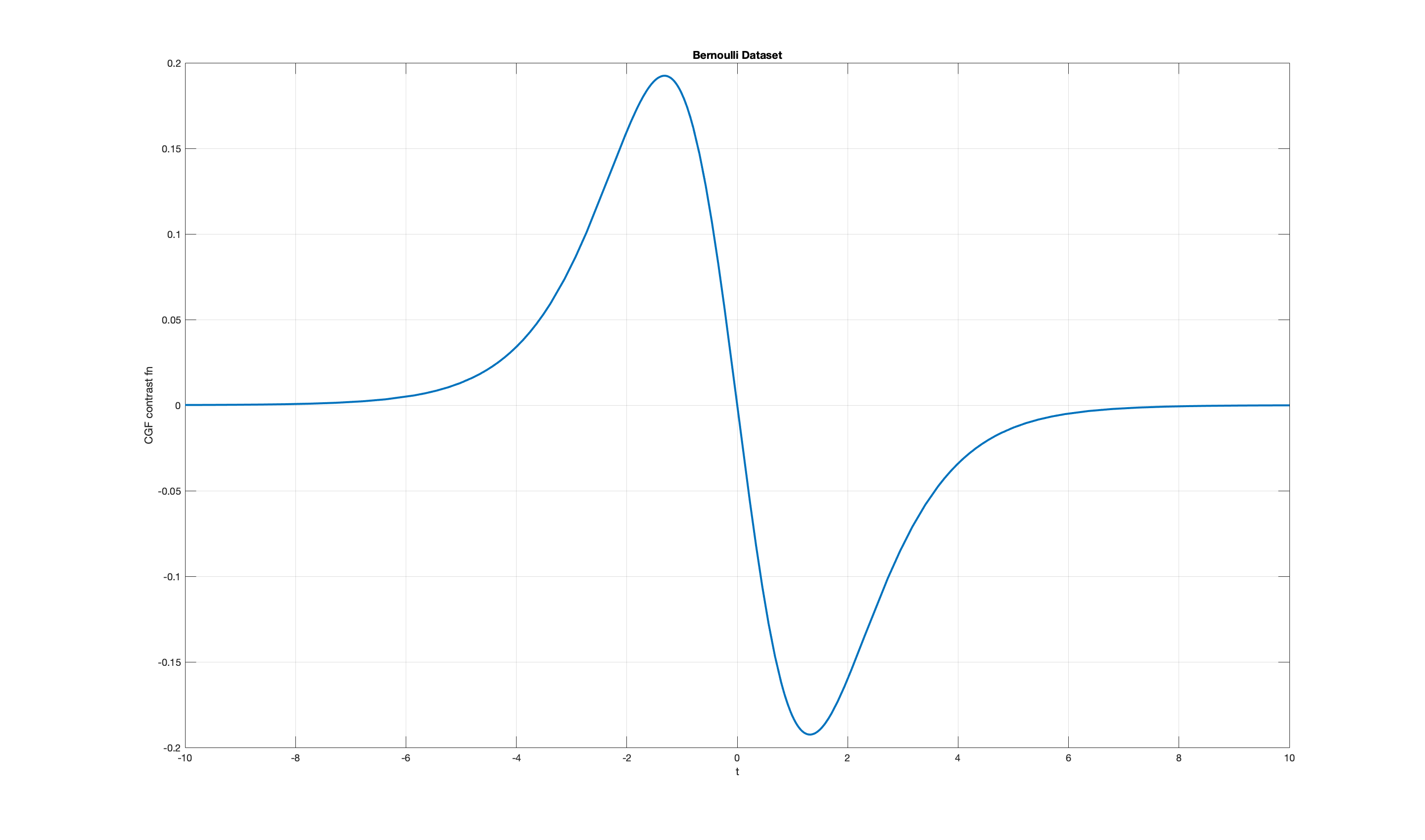}
        \caption{\label{fig:cgf_bernoulli}}
    \end{subfigure}
    \begin{subfigure}[b]{0.48\textwidth}
        \centering
        \includegraphics[width=\textwidth]{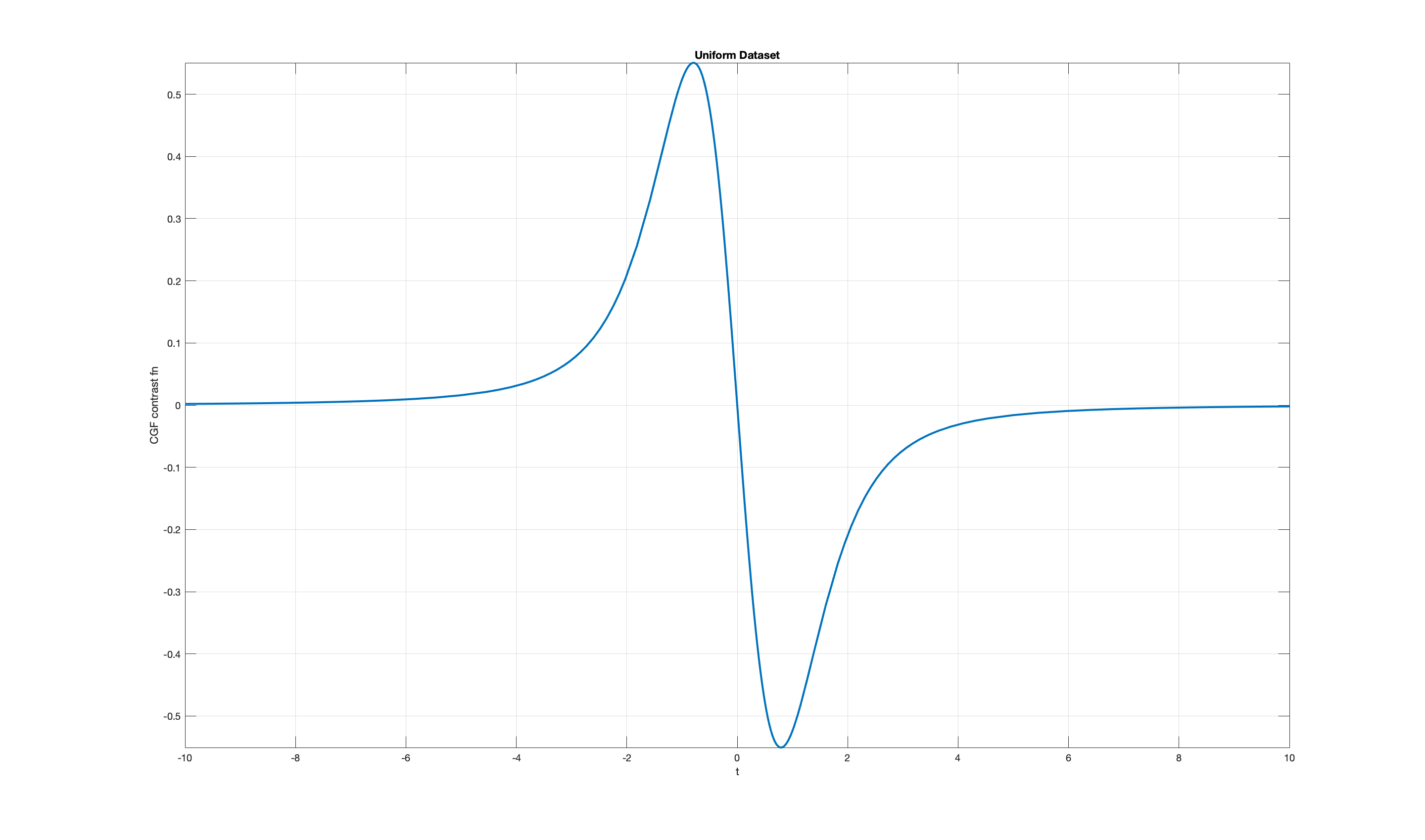}
        \caption{\label{fig:cgf_uniform}}
    \end{subfigure}
    \begin{subfigure}[b]{0.48\textwidth}
        \centering
        \includegraphics[width=\textwidth]{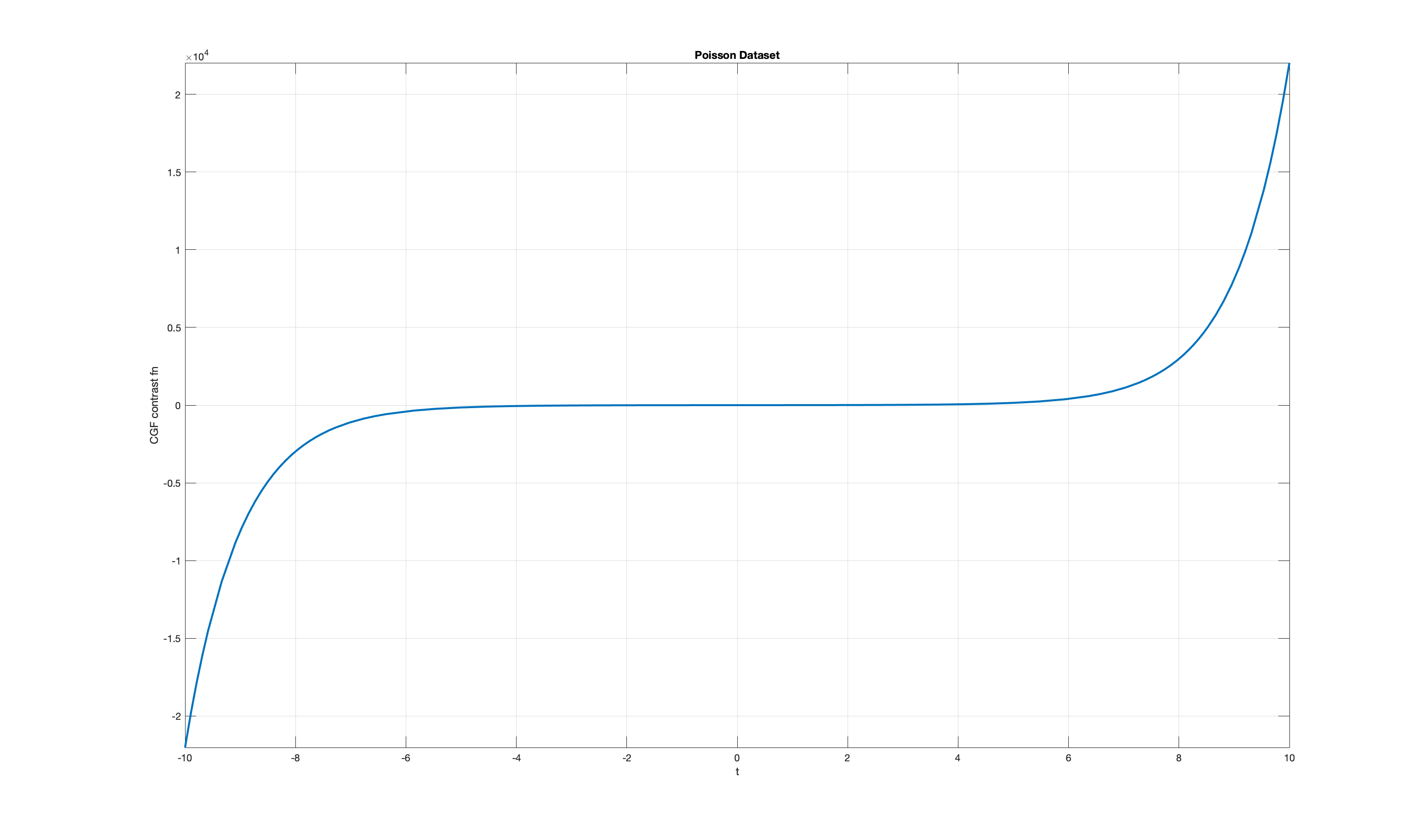}
        \caption{\label{fig:cgf_poisson}}
    \end{subfigure}
    \begin{subfigure}[b]{0.48\textwidth}
        \centering
        \includegraphics[width=\textwidth]{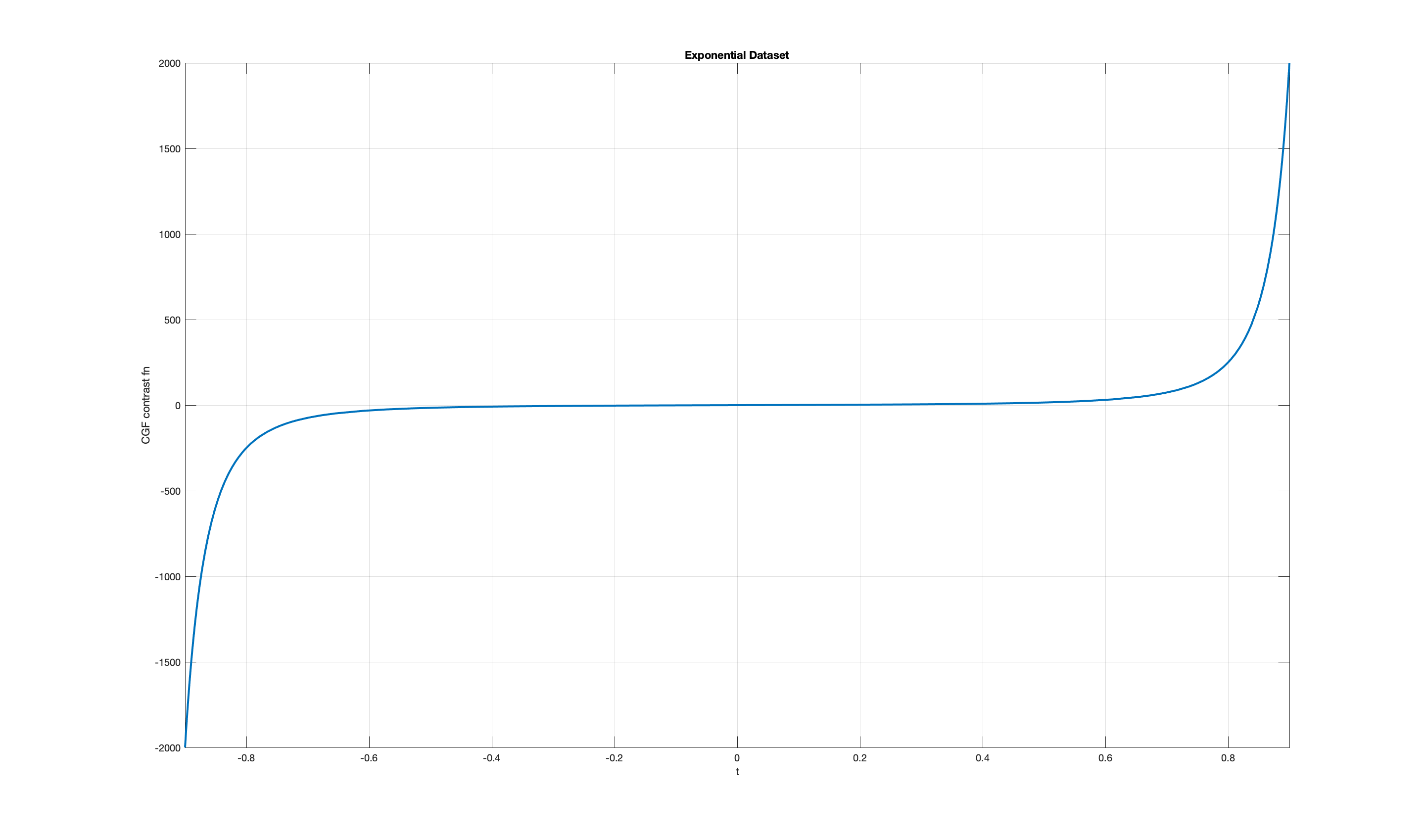}
        \caption{\label{fig:cgf_exp}}
    \end{subfigure}
    \vspace{30pt}
    \caption{Plots of the third derivative of the CGF-based contrast function for different datasets - Bernoulli($p = 0.5$) \ref{fig:cgf_bernoulli}, Uniform($\mathcal{U}\bb{-\sqrt{3},\sqrt{3}}$) \ref{fig:cgf_uniform}, Poisson($\lambda = 1$) \ref{fig:cgf_poisson} and Exponential($\lambda = 1$) \ref{fig:cgf_exp}. Note that the sign stays the same in each half-line}
    \label{fig:cgf_examples}
\end{figure}

\textbf{Logarithm of the symmetrized characteristic function.} Figure \ref{fig:chf_examples} depicts the third derivative of this contrast function for the logistic distribution where Assumption 1(d) holds. Although the Assumption does not hold for a large class of distributions here, we believe that the notion of having the same sign can be relaxed up to some bounded parameter values instead of the entire half line, so that the global convergence results of Theorem \ref{theorem:global_convergence} still hold. This belief is further reinforced by Figure \ref{fig:surface_plots} which shows that the loss landscape of functions where Assumption 1(d) doesn't hold is still smooth.

\begin{figure}
    \centering
    \includegraphics[width=0.48\textwidth]{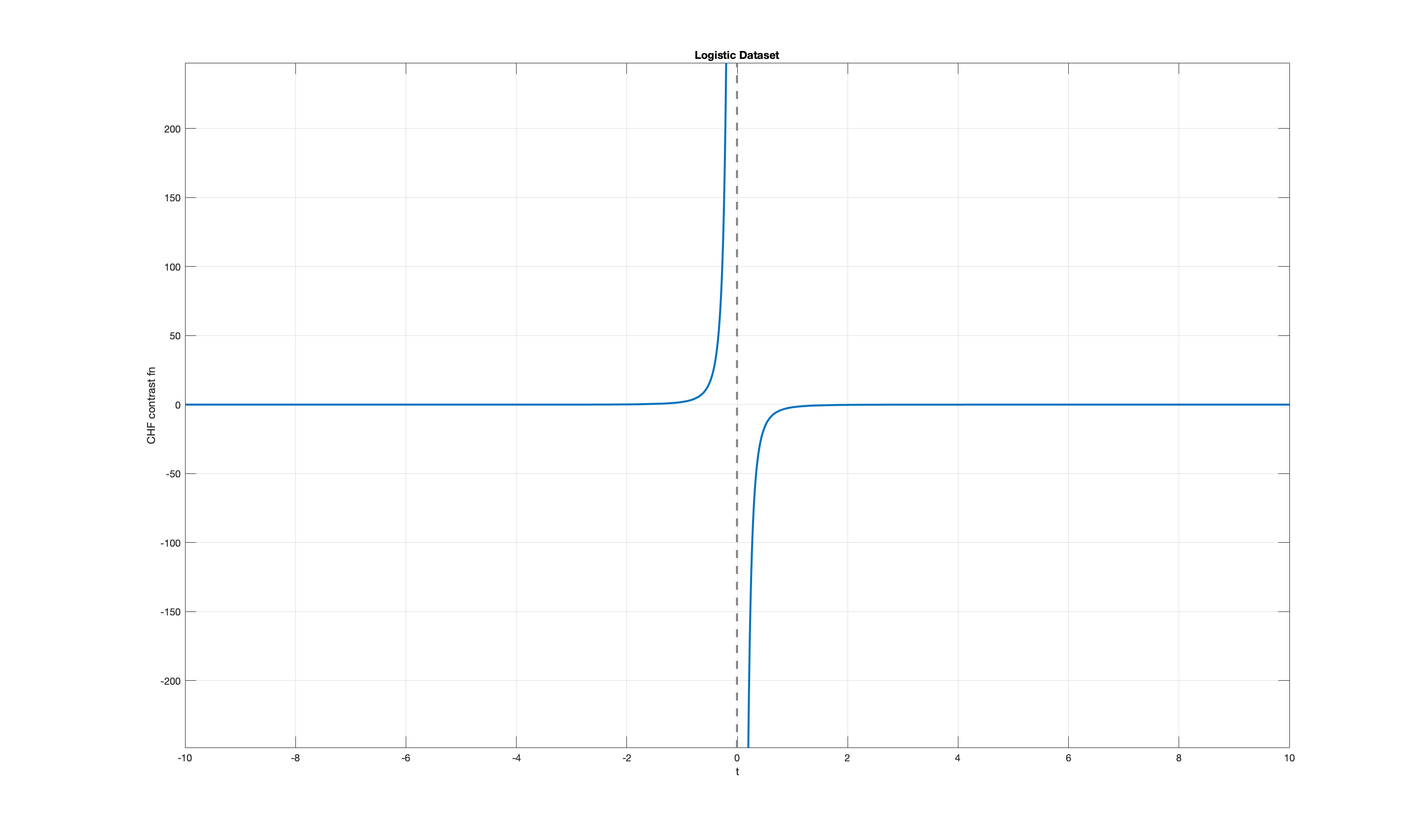}
    \caption{Plots of the third derivative of the CHF-based contrast function for Logistic$(0,1)$ dataset.}
    \label{fig:chf_examples}
\end{figure}

\subsection{Towards a necessary condition}
\label{subsection:necessary_condition}
Consider the probability distribution function (see~\cite{doi:10.1080/00031305.1994.10476058}) $f(x) = ke^{-\frac{x^{2}}{2}}(a + b\cos(c\pi x))$ for constants $a,b,c > 0$. For $f(x)$ to be a valid pdf, we require that $f(x) \geq 0$, and $\int_{-\infty}^{\infty} f(x) \,dx = 1$. Therefore, we have,
\begin{align}
    \int_{-\infty}^{\infty} ke^{-\frac{x^{2}}{2}}(a + b\cos(c\pi x)) \,dx = 1
\end{align}

Noting standard integration results, we have that, 
\begin{align}
    \int_{-\infty}^{\infty} e^{-\frac{x^{2}}{2}} \,dx = \sqrt{2\pi} \;\;\;\; \text{ and } \;\;\;\; \int_{-\infty}^{\infty} e^{-\frac{x^{2}}{2}}\cos(c\pi x) \,dx = \sqrt{2\pi}e^{-\frac{c^{2}\pi^{2}}{2}} 
\end{align}
Therefore, $k = \frac{1}{\sqrt{2\pi}(a + be^{-\frac{c^{2}\pi^{2}}{2}})}$.
Some algebraic manipulations yield the following:
\begin{align*}
    MGF_{f(x)}(t) &= 
     \frac{1}{(a + be^{-\frac{c^{2}\pi^{2}}{2}})}e^{\frac{t^{2}}{2}}\left(a + be^{-\frac{c^{2}\pi^{2}}{2}}\cos(c\pi t)\right)
\end{align*}

Therefore, $\ln(MGF_{f(x)}(t)) = -\ln(a + be^{-\frac{c^{2}\pi^{2}}{2}}) + \frac{t^2}{2} + \ln(a + be^{-\frac{c^{2}\pi^{2}}{2}}\cos(c\pi t))$.

\noindent
We now compute the mean, $\mu$, and variance $\sigma^{2}$. \\ \\
\noindent
The mean, $\mu$ can be given as :
\begin{align*}
    \mu = \int_{-\infty}^{\infty} f(x)x \,dx = 0 \text{  since } f(x) \text{ is an even function. }
\end{align*} 
The variance, $\sigma^{2}$ can therefore be given as :
\begin{align*}
    \sigma^{2} 
    &= k\left(a\int_{-\infty}^{\infty} e^{-\frac{x^{2}}{2}}x^{2}\,dx + b\int_{-\infty}^{\infty} e^{-\frac{x^{2}}{2}}x^{2}\cos(c\pi x) \,dx\right)
\end{align*}
Now, we note that $\int_{-\infty}^{\infty} e^{-\frac{x^{2}}{2}}x^{2}\,dx = \sqrt{2\pi}$ and $\int_{-\infty}^{\infty} e^{-\frac{x^{2}}{2}}x^{2}\cos(c\pi x) \,dx = e^{-\frac{c^{2}\pi^{2}}{2}}\sqrt{2\pi}(1 - c^{2}\pi^{2})$
Therefore, 
\begin{align*}
    \sigma^{2} &= k\left(a\sqrt{2\pi} + be^{-\frac{c^{2}\pi^{2}}{2}}\sqrt{2\pi}(1 - c^{2}\pi^{2})\right) 
    = 1 - \frac{bc^{2}\pi^{2}e^{-\frac{c^{2}\pi^{2}}{2}}}{a + be^{-\frac{c^{2}\pi^{2}}{2}}}
\end{align*}

The symmetrized CGF can therefore be written as 
\begin{align*}
    \sym CGF(t) &:= \ln(MGF_{f(x)}(t)) + \ln(MGF_{f(x)}(-t)) \\ 
    &= -2\ln(a + be^{-\frac{c^{2}\pi^{2}}{2}}) + t^2 + 2\ln(a + be^{-\frac{c^{2}\pi^{2}}{2}}\cos(c\pi t))
\end{align*}

Therefore, $g(t) = \sym CGF(t) - (1 - \frac{bc^{2}\pi^{2}e^{-\frac{c^{2}\pi^{2}}{2}}}{a + be^{-\frac{c^{2}\pi^{2}}{2}}})t^{2}$ can be written as : \begin{align*} g(t) &= -2\ln(a + be^{-\frac{c^{2}\pi^{2}}{2}}) + 2\ln(a + be^{-\frac{c^{2}\pi^{2}}{2}}\cos(c\pi t)) + \frac{bc^{2}\pi^{2}e^{-\frac{c^{2}\pi^{2}}{2}}}{a + be^{-\frac{c^{2}\pi^{2}}{2}}}t^2
\end{align*}

Finally, $g'(t)$ can be written as :
\begin{align*}
    g'(t) &= \frac{d}{dt}\left(2\ln(a + be^{-\frac{c^{2}\pi^{2}}{2}}\cos(c\pi t)) + \frac{bc^{2}\pi^{2}e^{-\frac{c^{2}\pi^{2}}{2}}}{a + be^{-\frac{c^{2}\pi^{2}}{2}}}t^2\right) \\
    &= -\frac{2bc\pi e^{-\frac{c^{2}\pi^{2}}{2}} \sin(c\pi t)}{a + be^{-\frac{c^{2}\pi^{2}}{2}}\cos(c\pi t)} + \frac{2bc^{2}\pi^{2}e^{-\frac{c^{2}\pi^{2}}{2}}}{a + be^{-\frac{c^{2}\pi^{2}}{2}}}t
\end{align*}

$g''(t)$ can be written as : 
\begin{align*}
    g''(t) 
    &= -2bc^{2}\pi^{2} e^{-\frac{c^{2}\pi^{2}}{2}}\frac{a \cos(c\pi t) +  b e^{-\frac{c^{2}\pi^{2}}{2}}}{(a + be^{-\frac{c^{2}\pi^{2}}{2}}\cos(c\pi t))^{2}} + \frac{2bc^{2}\pi^{2}e^{-\frac{c^{2}\pi^{2}}{2}}}{a + be^{-\frac{c^{2}\pi^{2}}{2}}}
\end{align*}

and $g'''(t)$ can be evaluated as:
\begin{align*}
g'''(t)&= 2bc^{3}\pi^{3} e^{-\frac{c^{2}\pi^{2}}{2}}\sin(c\pi t)\frac{\left(a^{2} - 2b^{2} e^{-c^{2}\pi^{2}} - ab e^{-\frac{c^{2}\pi^{2}}{2}}\cos(c\pi t)\right) }{(a + be^{-\frac{c^{2}\pi^{2}}{2}}\cos(c\pi t))^{3}}
\end{align*}

Lets set $a = 2, b = -1, c = \frac{4}{\pi}$. Then, we have that the pdf, $f(x) = ke^{-\frac{x^{2}}{2}}(2 - \cos(4x)) = ke^{-\frac{x^{2}}{2}}(1 + 2\sin^{2}(2x))$. The corresponding functions are : 
\begin{align*}
    E[\exp(t X)] &= \frac{1}{(2 - e^{-8})}e^{\frac{t^{2}}{2}}\left(2 - e^{-8}\cos(4t)\right)\\
    g(t) &:=\sym CGF(t)-\var(X)t^2=  -2\ln(2 - e^{-8}) + 2\ln(2 - e^{-8}\cos(4t)) - \frac{16e^{-8}}{2 - e^{-8}}t^2\label{eq:contrastcos}\\
    g'(t) &= \frac{8 e^{-8} \sin(4t)}{2 - e^{-8}\cos(4t)} - \frac{32e^{-8}}{2 - e^{-8}}t\\
    g''(t) &= 32 e^{-8}\frac{2 \cos(4 t) -  e^{-8}}{(2 - e^{-8}\cos(4t))^{2}} - \frac{32e^{-8}}{2 - e^{-8}}\\
    g'''(t) &= -128 e^{-8}\sin(4t)\frac{\left(4 - 2e^{-16} + 2 e^{-8}\cos(4t)\right) }{(2 - e^{-8}\cos(4t))^{3}}
\end{align*}

 \textbf{Closeness to a Gaussian MGF:}
It can be seen there that $g'''(t)$ changes sign in the half-line. However, the key is to note that the MGF of this distribution is very close to that of a Gaussian and can be made arbitrarily small by varying the parameters $a$ and $b$. This renders the CGF-based contrast function ineffectual. This leads us to believe that Assumption 1(d) is not far from being necessary to ensure global optimality of the corresponding objective function.


\section{Surface plots of the CHF-based contrast functions}
\label{section:surface_plots}
\begin{figure}
    \centering
    \begin{subfigure}[b]{0.4\textwidth}
        \centering
        \includegraphics[width=\textwidth]{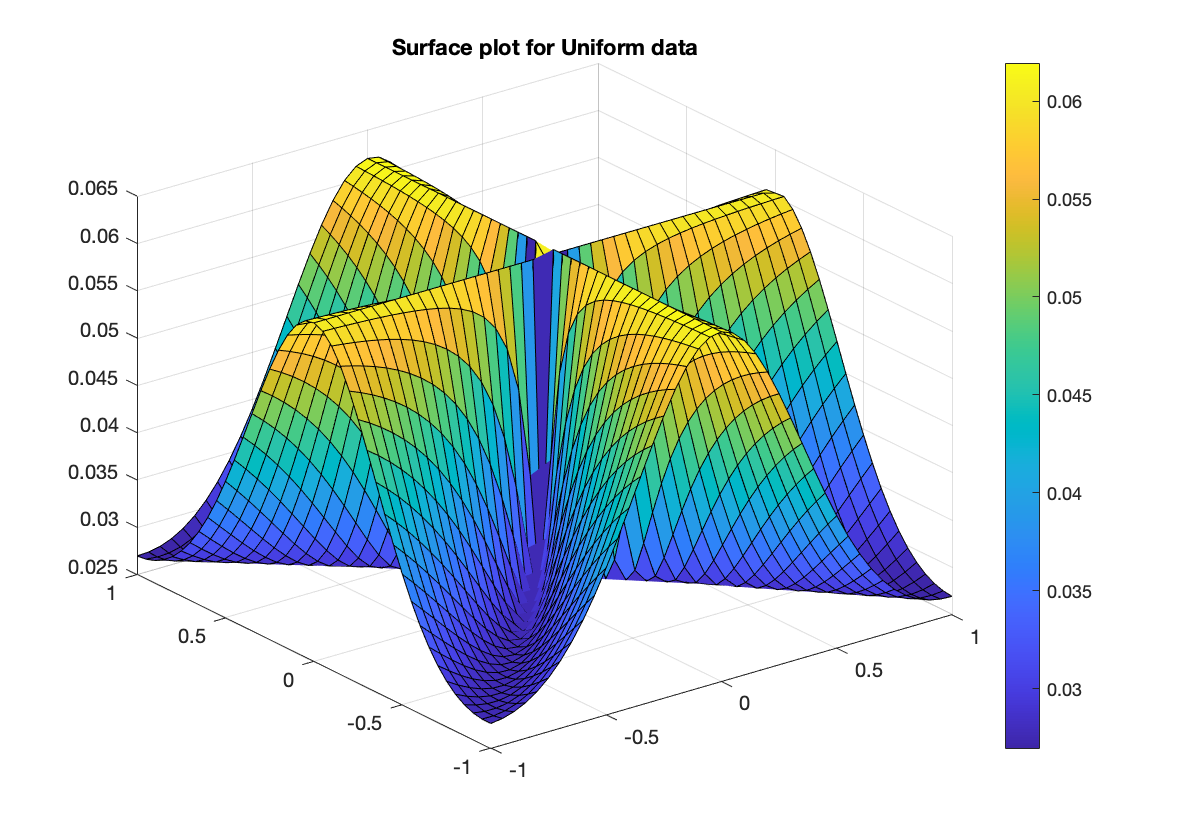}
        \caption{\label{fig:uniform_1}}
    \end{subfigure}
    \begin{subfigure}[b]{0.4\textwidth}
        \centering
        \includegraphics[width=\textwidth]{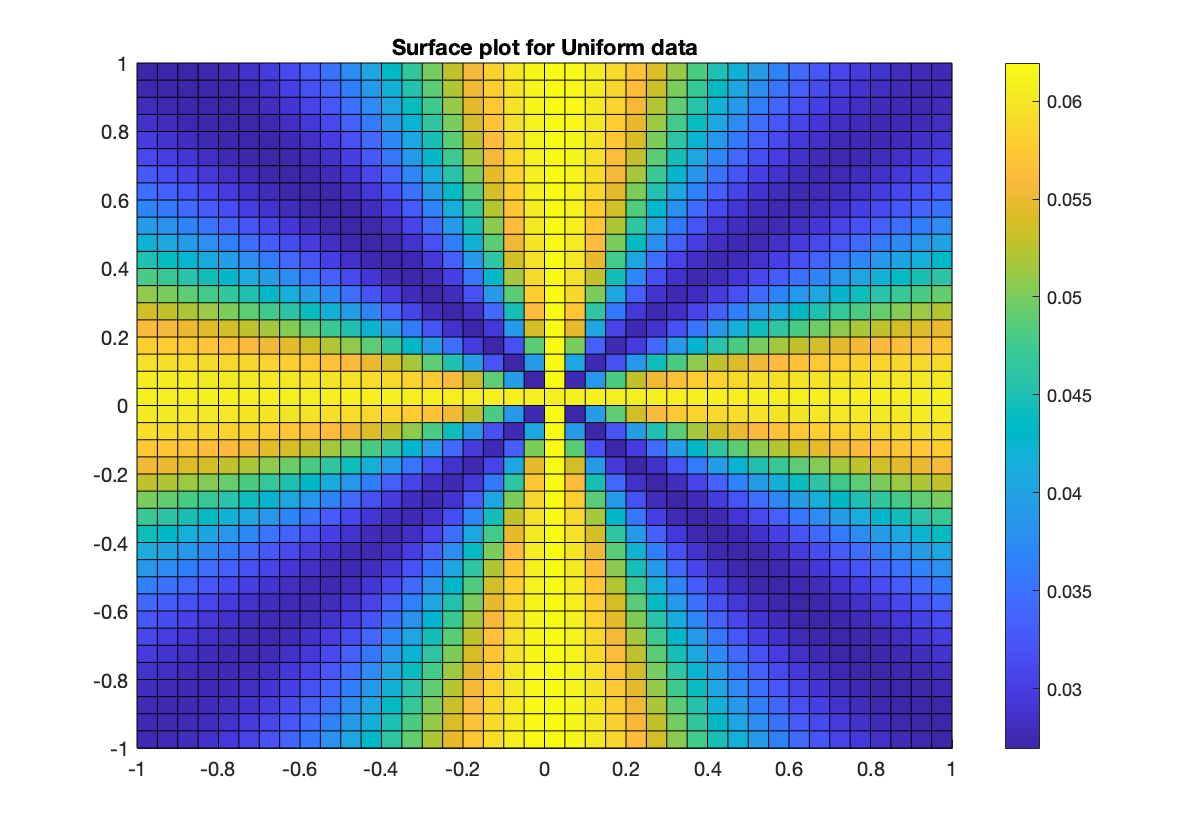}
        \caption{\label{fig:uniform_2}}
    \end{subfigure}
    \begin{subfigure}[b]{0.4\textwidth}
        \centering
        \includegraphics[width=\textwidth]{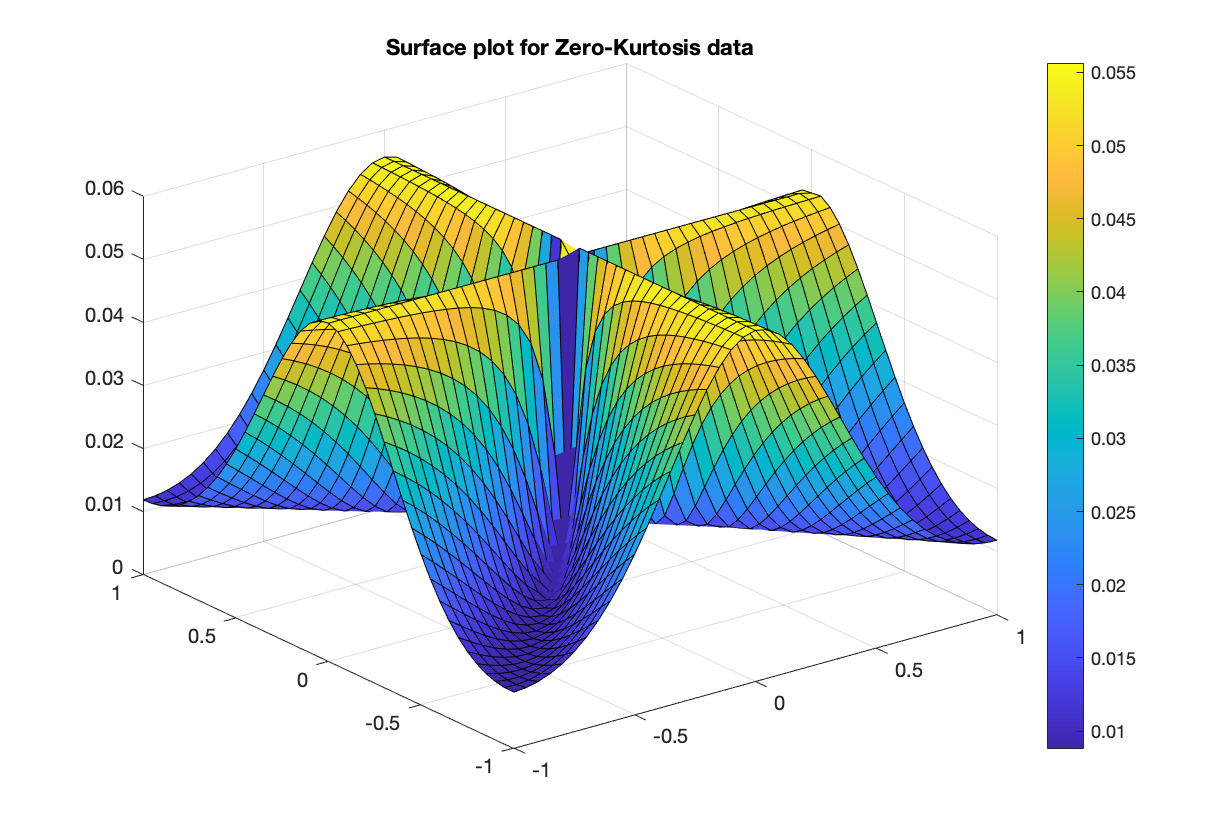}
        \caption{\label{fig:zerok_1}}
    \end{subfigure}
    \begin{subfigure}[b]{0.4\textwidth}
        \centering
        \includegraphics[width=\textwidth]{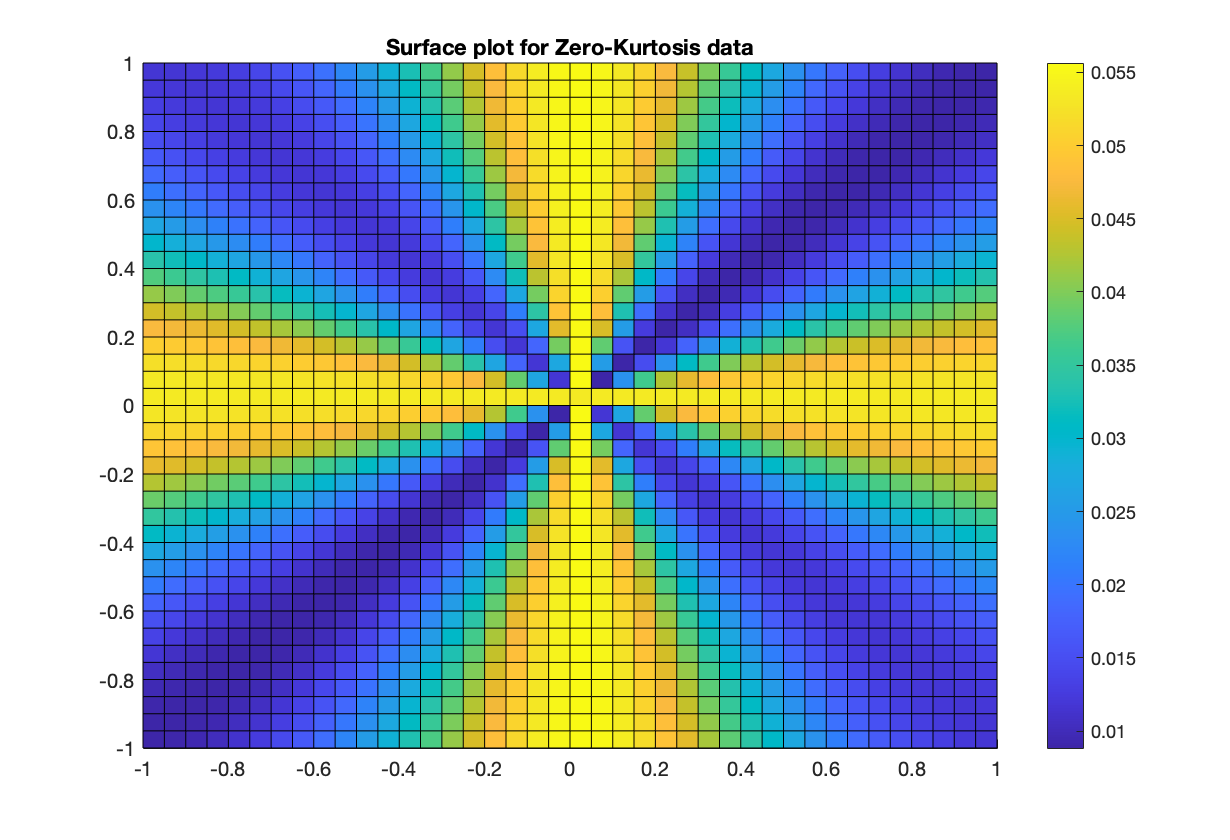}
        \caption{\label{fig:zerok_2}}
    \end{subfigure}
    \vspace{30pt}
    \caption{Surface plots for zero-kurtosis (\ref{fig:zerok_1} and \ref{fig:zerok_2}) and Uniform (\ref{fig:uniform_1}, \ref{fig:uniform_2}) data with $n=10000$ points, noise-power $\rho = 0.1$ and number of source signals, $k = 2$.}
    \label{fig:surface_plots}
\end{figure}

Figure \ref{fig:surface_plots} depicts the loss landscape of the Characteristic function (CHF) based contrast function described in Section \ref{section:chf_cgf_fn} of the manuscript. We plot the value of the contrast function evaluated at $B^{-1}\bu$, $\bu = \frac{1}{\sqrt{x^2 + y^2}}\bb{\begin{matrix}
    x \\ y 
\end{matrix}}$ for $x,y \in [-1,1]$. As shown in the figure. 
We have rotated the data so that the columns of $B$ align with the $X$ and $Y$ axes. The global maxima occur at $\bu$ aligned with the columns of $B$.

\end{document}